\documentclass[11pt,letterpaper]{article}
\usepackage[english]{babel}
\usepackage{amsmath}
\usepackage{amsfonts, mathrsfs}
\usepackage{amssymb}
\usepackage{verbatim}
\usepackage{graphicx}
\usepackage{color}
\usepackage{url}

\oddsidemargin  0pt \topmargin   0pt \headheight 0pt \headsep 0pt
\textwidth   6.5in \textheight 8.5in \marginparsep 0pt
\marginparwidth 0pt
\parskip 1ex  \parindent 0ex

\numberwithin{equation}{section}
\newtheorem{theo}{Theorem}[section]
\newtheorem{cor}[theo]{Corollary}
\newtheorem{prop}[theo]{Proposition}
\newtheorem{lemma}[theo]{Lemma}

\newtheorem{assum}[theo]{Assumption}
\newtheorem{defn}[theo]{Definition}

\newtheorem{remark}[theo]{Remark}
\newenvironment{proof}[1][Proof]{\textbf{#1.} }{\ \rule{0.5em}{0.5em}}

\newcommand{\var}{{\rm Var} \mspace{1mu}}

\renewcommand{\labelenumi}{\alph{enumi}.)}
\def\Indicator{\mathop{\hskip0pt{1}}\nolimits}
\newcommand{\sgn}{ \mspace{3mu} {\rm sgn}  \mspace{1mu}}

\usepackage{tikz}
\usepackage{lmodern}
\usetikzlibrary{arrows, decorations.markings}
\usepackage{pgfplots}

\begin{document}

\title{Ranking algorithms on directed configuration networks}

\author{Ningyuan Chen \\  {\small Columbia University}
\and
Nelly Litvak \\  {\small University of Twente}
\and
Mariana Olvera-Cravioto  \\ {\small Columbia University}
}

\maketitle

\begin{abstract}
This paper studies the distribution of a family of rankings, which includes Google's PageRank, on a directed configuration model. In particular, it is shown that the distribution of the rank of a randomly chosen node in the graph converges in distribution to a finite random variable $\mathcal{R}^*$ that can be written as a linear combination of i.i.d. copies of the endogenous solution to a stochastic fixed point equation of the form
$$\mathcal{R} \stackrel{\mathcal{D}}{=} \sum_{i=1}^{\mathcal{N}} \mathcal{C}_i \mathcal{R}_i + \mathcal{Q},$$
where $(\mathcal{Q}, \mathcal{N}, \{ \mathcal{C}_i\})$ is a real-valued vector with $\mathcal{N} \in \{0,1,2,\dots\}$, $P(|\mathcal{Q}| > 0) > 0$, and the $\{\mathcal{R}_i\}$ are i.i.d. copies of $\mathcal{R}$, independent of $(\mathcal{Q}, \mathcal{N}, \{ \mathcal{C}_i\})$.  Moreover, we provide precise asymptotics for the limit $\mathcal{R}^*$, which when the in-degree distribution in the directed configuration model has a power law imply a power law distribution for $\mathcal{R}^*$ with the same exponent.

\vspace{5mm}

\noindent {\em Kewywords:} PageRank, ranking algorithms, directed configuration model, complex networks, stochastic fixed-point equations, weighted branching processes, power laws.

\noindent {\em 2000 MSC:} Primary: 05C80, 60J80, 68P20. Secondary: 41A60, 37A30, 60B10.

\end{abstract}

\section{Introduction}

Ranking of nodes according to their centrality, or importance, in a complex network such as the Internet, the World Wide Web, and other social and biological networks, has been a hot research topic for several years in physics, mathematics, and computer science. For a comprehensive overview of the vast literature on rankings in networks we refer the reader to \cite{Langville2011google}, and more recently to~\cite{Boldi2014axioms} for a thorough up-to-date mathematical classification of centrality measures.

In this paper we analyze a family of ranking algorithms which includes Google's PageRank, the algorithm proposed by Brin and Page~\cite{Bri_Pag_98}, and which is arguably the most influential technique for computing rankings of nodes in large directed networks. The original definition of PageRank is the following. Let ${\cal G}_n=(V_n,E_n)$ be a directed graph, with a set of (numbered) vertices $V_n=\{1,\ldots,n\}$, and a set of directed edges $E_n$. Choose a constant $c\in (0,1)$, which is called a {\it damping factor}, and let ${\bf q}=(q_1,q_2,\ldots,q_n)$ be a {\it personalization} probability vector, i.e., $q_i \geq 0$ and $\sum_{i=1}^nq_i=1$. Denote by $d_i=|\{j: (i,j)\in E_n\}|$ the out-degree of node $i\in V_n$.  Then the PageRank vector ${\bf r}=(r_1,\ldots, r_n)$ is the unique solution to the following system of linear equations:
\begin{equation}
\label{eq:PR_def0}
r_i=\sum_{j: (j,i)\in E_n}\frac{c}{d_j}\,r_j +(1-c)q_i,\quad i=1,\ldots,n.
\end{equation}
Google's PageRank was designed to rank Web pages based on the network's structure, rather than their content. The idea behind (\ref{eq:PR_def0}) is that a page is important if many important pages have a hyperlink to it. Furthermore, by tuning the personalization values, $q_i$'s, one can, for instance, give preference to specific topics \cite{Haveliwala2002personalization} or penalize spam pages \cite{Gyongyi04}.

In the original definition, ${\bf r}$ is normalized so that $||{\bf r}||_1=1$, where the norm $||{\bf x}||_1=\sum_{i=1}^n|x_i|$ denotes the $l_1$ norm in $\mathbb{R}^n$. Since the average PageRank in ${\bf r}$ scales as $O(1/n)$, it is more convenient for our purposes to work with a scaled version of PageRank:
\[n{\bf r}=:{\bf R}=(R_1,R_2,\ldots,R_n).\]
Then, also using the notation $C_j$ for $c/d_j$, and notation $Q_i$ for $n(1-c)q_i$, we rewrite  (\ref{eq:PR_def0}) to obtain
\begin{equation}
\label{eq:PR_def2}
R_i=\sum_{j: (j,i)\in E_n}C_j\,R_j +Q_i,\quad i=1,\ldots,n.
\end{equation}
Throughout the paper, we will refer to ${\bf R}$ as  the PageRank vector and to ${\bf Q}=(Q_1,Q_2,\ldots,Q_n)$ as the personalization vector.

The basic definition (\ref{eq:PR_def0}) has many modifications and generalizations. The analysis in this paper will cover a wide range of them by allowing a general form of the coefficients in (\ref{eq:PR_def2}). For example, our model admits a random damping factor as studied in \cite{Constantine2009random_alpha}. Numerous applications of PageRank and its modifications include graph clustering~\cite{Andersen06}, spam detection~\cite{Gyongyi04}, and citation analysis~\cite{Chen06citations,Waltman2010eigenfactor}.

In real-world networks, it is often found that the fraction of nodes with (in- or out-) degree $k$ is $\approx c_0k^{-\alpha-1}$, usually $\alpha\in(1,3)$, see e.g., \cite{Bri_Pag_98, Newman2010networks}. Thus, a lot of research has been devoted to the study of random graph models with highly skewed, or scale-free, degree distributions. By now, classical examples are the Chung-Lu model~\cite{Chung2002CLmodel}, the Preferential Attachment model~\cite{Bollobas2001PA}, and the Configuration Model~\cite[Chapter 7]{HofstadRG}. New  models continue to appear, tuned to the properties of specific networks. For example, an interesting ``super-star'' model was recently developed to describe retweet graphs~\cite{Bhamidi2012superstar}. We refer to~\cite{HofstadRG,Durrett2007random,Newman2010networks} for a more detailed discussion of random graph models for complex networks.
In this paper we focus on the Directed Configuration Model as studied in~\cite{Chen_Olv_13}. Originally, an (undirected) Configuration Model is defined as a graph, randomly sampled from the set of graphs with a given degree sequence~\cite{Bollobas1980CM}. We emphasize that, to the best of our knowledge,~\cite{Chen_Olv_13} is the only paper that formally addresses the directed version of the Configuration Model and obtains its exact mathematical properties. We will provide more details in Section~\ref{S.DCM}.

From the work of Pandurangan et al.~\cite{Pandurangan2002PR}, and many papers that followed, the following hypothesis has always been confirmed by the data.

{\bf The power law hypothesis:} {\em If the in-degree distribution in a network follows a power law then the PageRank scores in this network will also follow a power law with the same exponent.}

The power law hypothesis is plausible because in (\ref{eq:PR_def0}) the number of terms in the summation on the right-hand side is just the in-degree of $i$, so the in-degree provides a `mean-field' approximation for PageRank~\cite{Fortunato2008approximating}. However, this argument is not exact nor accurate enough, which is confirmed by the fact that the top-ranked nodes in PageRank are not exactly those with the largest in-degrees \cite{Chen06citations,Volkovich2009complex,Vigna2014Kendall}. Exact mathematical evidence supporting the power law hypothesis is surprisingly scarce. As one of the few examples, \cite{Avrachenkov06gn} obtains the power law behavior of average PageRank scores in a preferential attachment graph by using Polya's urn scheme and advanced numerical methods.

In a series of papers, Volkovich et al.~\cite{Lit_Sch_Volk_07,Volk_Litv_Dona_07,Volk_Litv_10} suggested an analytical explanation for the power law behavior of PageRank by comparing the PageRank of a randomly chosen node to the endogenous solution of a stochastic fixed point equation (SFPE) that mimics (\ref{eq:PR_def2}):
\begin{equation}
\label{eq:SFPE0}
R\stackrel{{\cal D}}{=}\sum_{i=1}^NC_iR_i+Q.\end{equation}
 Here $N$ (in-degree) is a nonnegative integer random variable having a power law distribution with exponent $\alpha$, $Q$ (personalization) is an arbitrary positive random variable, and the $C_i$'s are random coefficients that in \cite{Volk_Litv_10} equal $c/D_i$, with $D_i$ being the out-degree of a node provided $D_i \geq 1$.  The symbol $\stackrel{\mathcal{D}}{=}$ denotes equality in distribution. Assuming that $N$ is regularly varying and using Laplace transforms, it was proved in \cite{Volk_Litv_10} that $R$ has a power law with the same exponent as $N$ if $N$ has a heavier tail than $Q$, whereas the tail of $R$ is determined by $Q$ if it is heavier than $N$. The same result was also proved independently in \cite{Jel_Olv_10} using a sample-path approach.

The properties of equation (\ref{eq:SFPE0}) and the study of its multiple solutions has itself been an interesting topic in the recent literature \cite{Alsm_Mein_10b, Jel_Olv_10, Jel_Olv_12a, Jel_Olv_12b, Olvera_12a, Als_Dam_Men_12}, and is related to the broader study of weighted branching processes (WBPs) \cite{Rosler_93, Ros_Top_Vat_00, Ros_Top_Vat_02}. The tail behavior of the endogenous solution, the one relevant to PageRank, was given in \cite{Jel_Olv_10, Jel_Olv_12a, Jel_Olv_12b, Olvera_12a}.  In particular, in \cite{Jel_Olv_10} it was discovered that when the $C_i$'s are not bounded by one and there exists a positive root to the equation $E\left[ \sum_{i=1}^N |C_i|^\alpha \right] = 1$ with $0 < E\left[ \sum_{i=1}^N |C_i|^\alpha \log |C_i| \right] < \infty$, then $R$ will have a power law tail with exponent $\alpha$; the main tool for this type of analysis is the implicit renewal theory on trees developed there and later extended in \cite{Jel_Olv_12a, Jel_Olv_12b} to study \eqref{eq:SFPE0} in its full generality.

However, the SFPE does not fully explain the behavior of PageRank in networks since it implicitly assumes that the underlying graph is an infinite tree, a condition that is never true in real-world networks. In this work we complete the argument when the underlying network is a Directed Configuration Model by showing that the distribution of the PageRank in the graph converges to the endogenous solution of a SFPE. Our techniques are likely to be useful in the analysis of PageRank in other locally tree-like graphs.

The essential theoretical contribution of this work is two-fold. First, we prove that the PageRank in the Directed Configuration Model is well approximated by the endogenous solution to a specific SFPE of the same type as  (\ref{eq:SFPE0}). Second, we develop a methodology to analyze processes on graphs based on a coupling with a new type of stochastic process: a {\it weighted} branching process. Due to the presence of weights, couplings with weighted branching processes are more complex compared to traditional couplings with standard branching processes, and therefore, our approach may be of independent interest.

In Section~\ref{sec:overview} we describe our main results, outline the methodology, and provide an overview of the rest of the paper.

\section{Overview of the paper}
\label{sec:overview}

Although a rigorous presentation of the main result in the paper requires a significant amount of notation, we provide here a somewhat imprecise version that still captures the essence of our work. The paper is written according to the different steps needed in the proof of the main result, outlined in Section \ref{S.Methodology}, and the precise statement can be found in Section \ref{S.MainResult}.

\subsection{An overview of the main result}

Let ${\cal G}_n =(V_n,E_n)$ be a directed graph. We number the nodes $V_n=\{1,2,\ldots,n\}$ in an arbitrary fashion and let $R_1=:R^{(n)}_1$ denote the PageRank of node~1, as defined by (\ref{eq:PR_def2}). The in-degree of node~1 is then a random variable $N_1$ picked uniformly at random from the in-degrees of all $n$ nodes in the graph (i.e., from the empirical distribution). Next, we use the notation $N_{i+1}$ to denote the in-degree of the $i$th inbound neighbor of node~1 (i.e., $(i+1,1) \in E_n$), and note that although the $\{N_i\}_{i \geq 2}$ have the same distribution, it is not necessarily the same of $N_1$ since their corresponding nodes implicitly have one or more out-degrees. More precisely, the distribution of the $\{N_i\}_{i\geq 2}$ is an empirical {\it size-biased} distribution where nodes with high out-degrees are more likely to be chosen. The two distributions can be significantly different when the number of dangling nodes (nodes with zero out-degrees) is a positive fraction of $n$ and their in-degree distribution is different than that of nodes with one or more out-degrees. Similarly, let $Q_1$ and $\{Q_i\}_{i\geq 2}$ denote the personalization values of node~1 and of its neighbors, respectively, and let $\{C_i\}_{i \geq 2}$ denote the coefficients, or weights, of the neighbors.

As already mentioned, we will assume throughout the paper that $\mathcal{G}_n$ is constructed according to the Directed Configuration Model (DCM). To briefly explain the construction of the DCM consider a bi-degree sequence $({\bf N}_n, {\bf D}_n) = \{(N_i, D_i): 1 \leq i \leq n\}$ of nonnegative integers satisfying $\sum_{i=1}^n N_i = \sum_{i=1}^n D_i$. To draw the graph think of each node, say node $i$, as having $N_i$ inbound and $D_i$ outbound half-edges or stubs, then pair each of its inbound stubs with a randomly chosen outbound stub from the set of unpaired outbound stubs (see Section \ref{S.DCM} for more details). The resulting graph is in general what is called a multigraph, i.e., it can have self-loops and multiple edges in the same direction.

Our main result requires us to make some assumptions on the bi-degree sequence used to construct the DCM, as well as on the coefficients $\{C_i\}$ and the personalization values $\{Q_i\}$, which we will refer to as the extended bi-degree sequence. The first set of assumptions (see Assumption~\ref{A.Bidegree}) requires the existence of certain limits in the spirit of the weak law of large numbers, including $\frac{1}{n}\sum_{i=1}^nD_i^2$ to be bounded in probability (which essentially imposes a finite variance on the out-degrees). This first assumption will ensure the local tree-like structure of the graph. The second set of assumptions (see Assumption~\ref{A.WeakConvergence}) requires the convergence of certain empirical distributions, derived from the extended bi-degree sequence, to proper limits as the graph size goes to infinity. This type of weak convergence assumption is typical in the analysis of random graphs~\cite{HofstadRG}. We point out that the two sets of assumptions mentioned above are rather weak, and therefore our result is very general. Moreover, as an example, we provide in Section \ref{S.Example} an algorithm to generate an extended bi-degree sequence from a set of prescribed distributions that satisfies both assumptions.

To state our main result let $(\mathcal{N}_0, \mathcal{Q}_0)$ and $(\mathcal{N}, \mathcal{Q}, \mathcal{C})$ denote the weak limits of the joint random distributions of $(N_1, Q_1)$ and $(N_2, Q_2, C_2)$, respectively, as defined in Assumption \ref{A.WeakConvergence}. Let $\mathcal{R}$ denote the endogenous solution to the following SFPE:
\begin{equation}
\label{eq:SFPE}
\mathcal{R} \stackrel{\mathcal{D}}{=} \sum_{j=1}^{\mathcal{N}} \mathcal{C}_j \mathcal{R}_j + \mathcal{Q},
\end{equation}
where $\{\mathcal{R}_i\}$ are i.i.d. copies of $\mathcal{R}$, independent of $(\mathcal{N}, \mathcal{Q}, \{\mathcal{C}_i\})$, and with $\{\mathcal{C}_i\}$ i.i.d. and independent of $(\mathcal{N}, \mathcal{Q})$. Our main result establishes that under the assumptions mentioned above, we have that
\[R_1^{(n)}\Rightarrow {\cal R}^*, \qquad n \to \infty, \]
where $\Rightarrow$ denotes weak convergence and $\mathcal{R}^*$ is given by
\begin{equation} \label{eq:FinalLimit}
\mathcal{R}^* :=  \sum_{j=1}^{\mathcal{N}_0} \mathcal{C}_j \mathcal{R}_j + \mathcal{Q}_0,
\end{equation}
where the $\{\mathcal{R}_i\}$ are again i.i.d. copies of $\mathcal{R}$, independent of $(\mathcal{N}_0, \mathcal{Q}_0, \{\mathcal{C}_i\})$, and with $\{\mathcal{C}_i\}$ independent of $(\mathcal{N}_0, \mathcal{Q}_0)$.
Thus, $R_1^{(n)}$ is well approximated by a linear combination of endogenous solutions of a SFPE. Here ${\cal R}^*$ represents the PageRank of node~1, and the $\mathcal{R}_i$'s represent the PageRank of its inbound neighbors. We give more details on the explicit construction of $\mathcal{R}$ and comment on why it is called the ``endogenous" solution in Section \ref{S.Main}. Furthermore, since $\mathcal{R}$ has been thoroughly studied in the weighted branching processes literature, we can establish the power law behavior of PageRank in a wide class of DCM graphs.

\subsection{Methodology} \label{S.Methodology}

As mentioned earlier, the proof of our main result is given in several steps, each of them requiring a very different type of analysis. For the convenience of the reader, we include in this section a map of these steps.

We start in Section~\ref{S.DCM} by describing the DCM, which on its own does not require any assumptions on the bi-degree sequence.  Then, in Section~\ref{S.spectral_ranking} we define a class of ranking algorithms, of which PageRank and its various modifications are special cases. These algorithms produce a vector ${\bf R}^{(n)}$ that is a solution to a linear system of equations, where the coefficients are the {\it weights} $\{ C_i\}$ assigned to the nodes. For example, in the classical PageRank scenario, we have $C_i=c/D_i$, if $D_i\ne 0$.

The proof of the main result consists of the following three steps:
\begin{enumerate}  \renewcommand{\labelenumi}{\arabic{enumi}.}
\item {\em Finite approximation} (Section \ref{S.FinitelyMany}). Show that the class of rankings that we study can be approximated in the DCM with any given accuracy by a finite  (independent of the graph size $n$) number of matrix iterations. The DCM plays a crucial role in this step since it implies that the ranks of all the nodes in the graph have the same distribution. A uniform bound on the sequence $\{C_i D_i\}$ is required to provide a suitable rate of convergence.

\item {\em Coupling with a tree} (Section~\ref{S.CouplingWithTree}). Construct a coupling of the DCM graph and a  ``thorny branching tree" (TBT). In a TBT each node with the exception of the root has one outbound link to its parent and possibly several other unpaired outbound links. During the construction, all nodes in both the graph and the tree are also assigned a weight $C_i$. The main result in this section is the Coupling Lemma~\ref{L.CouplingBreaks}, which states that the coupling between the graph and the tree will hold for a number of generations in the tree that is logarithmic in $n$. The locally tree-like property of the DCM and our first set of assumptions (Assumption \ref{A.Bidegree}) on the bi-degree sequence are important for this step.

\item {\em Convergence to a weighted branching process} (Section~\ref{S.Main}).  Show that the rank of the root node of the TBT converges weakly to \eqref{eq:FinalLimit}. This last step requires the weak convergence of the random distributions that define the TBT in the previous step (Assumption \ref{A.WeakConvergence}).

\end{enumerate}

Finally, Section \ref{S.Example} gives an algorithm to construct an extended bi-degree sequence satisfying the two main assumptions. The technical proofs are postponed to Section \ref{S.Proofs}.

\section{The directed configuration model}
\label{S.DCM}

The Configuration Model (CM) was originally defined as an undirected graph sampled uniformly at random from the collection of graphs with a given degree sequence~\cite{Bollobas1980CM}. In order to ensure a desired degree distribution, one may generate an i.i.d. degree sequence sampled from this distribution, see \cite[Section 7.6]{HofstadRG}. In this case each node receives a random number of half-edges, or stubs, and then the stubs are paired uniformly at random. The resulting graph is, in general, a multi-graph, because two stubs of the same node may form an edge (self-loop), or a node may have two or more stubs connected to the same other node (multiple edges). There are two ways to create a simple graph. In the {\it repeated} CM, the pairing is repeated until a simple graph is obtained. This will occur with positive probability if the degrees have finite variance, see \cite[Section 7.6.]{HofstadRG}. In the {\it erased} CM self-loops and double-edges are removed. In the erased CM, the degree sequence is altered because of edge removal, but the distribution of the original degree sequence is preserved asymptotically under very general conditions, see again \cite[Section 7.6]{HofstadRG}. A literature review and discussion of the undirected CM is provided in \cite[Section 7.9]{HofstadRG}.

While the undirected CM has been thoroughly studied, a formal analysis of the Directed Configuration Model (DCM) with given in- and out-degree distributions has only been recently presented by Chen and Olvera-Cravioto~\cite{Chen_Olv_13}. The crucial difference compared to the undirected case is that now we have a {\it bi-degree} sequence, i.e., a pair of sequences of nonnegative integers determining the in- and out-degrees of the nodes. Note that the sums of the in-degrees must be equal to that of the out-degrees for one to be able to draw a graph. The difficulty and originality of the DCM  is that sums of i.i.d. in- and out-degrees will only be equal with a probability converging to zero as the size of the graph grows. To circumvent this problem, the algorithm given in \cite{Chen_Olv_13}, and included in Section~\ref{S.Example} in this paper, forces the sums to match by adding the necessary half-edges in such a way that the degree distributions are essentially unchanged.

In order to analyze the distribution of ranking scores on the DCM we also need other node attributes besides the in- and out-degrees, such as the coefficients and the personalization values. With this in mind we give the following definition.

\begin{defn}
\label{D.bidegree}
We say that the sequence $({\bf N}_n, {\bf D}_n, {\bf C}_n, {\bf Q}_n) = \{ (N_i, D_i, C_i, Q_i): 1 \leq i \leq n \}$ is an {\em extended bi-degree sequence} if for all $1 \leq i \leq n$ it satisfies $N_i, D_i \in \mathbb{N} = \{0, 1, 2, 3, \dots\}$, $Q_i, C_i \in \mathbb{R}$, and is such that
$$L_n := \sum_{i=1}^n N_i = \sum_{i=1}^n D_i.$$
In this case, we call $({\bf N}_n, {\bf D}_n)$ a {\em bi-degree sequence}.
\end{defn}

Formally, the DCM can be defined as follows.

\begin{defn}
\label{D.DCM}
Let $({\bf N}_n, {\bf D}_n)$ be a bi-degree sequence and let $V_n = \{1, 2, \dots, n\}$ denote the nodes in the graph. To each node $i$ assign $N_i$ inbound half-edges and $D_i$ outbound half-edges. Enumerate all $L_n$ inbound half-edges, respectively outbound half-edges, with the numbers $\{1, 2, \dots, L_n\}$, and let ${\bf x}_n = (x_1, x_2, \dots, x_{L_n})$ be a random permutation of these $L_n$ numbers, chosen uniformly at random from the possible $L_n!$ permutations. The DCM with bi-degree sequence $({\bf N}_n, {\bf D}_n)$ is the directed graph $\mathcal{G}_n = (V_n, E_n)$ obtained by pairing the $x_i$th outbound half-edge with the $i$th inbound half-edge.
\end{defn}

We point out that instead of generating the permutation ${\bf x}_n$ of the outbound half-edges up front, one could alternatively construct the graph in a breadth-first fashion, by pairing each of the inbound half-edges, one at a time, with an outbound half-edge, randomly chosen with equal probability from the set of unpaired outbound half-edges. In Section \ref{S.CouplingWithTree} we will follow this approach while simultaneously constructing a coupled TBT.

We emphasize that the DCM is, in general, a multi-graph. It was shown in~\cite{Chen_Olv_13} that the random pairing of inbound and outbound half-edges results in a simple graph with positive probability provided both the in-degree and out-degree distributions possess a finite variance. In this case, one can obtain a simple realization after finitely many attempts, a method we refer to as the {\it repeated} DCM, and this realization will be chosen uniformly at random from all simple directed graphs with the given bi-degree sequence. Furthermore, if the self-loops and multiple edges in the same direction are simply removed, a model we refer to as the {\it erased} DCM, the degree distributions will remain asymptotically unchanged.

For the purposes of this paper, self-loops and multiple edges in the same direction do not affect the main convergence result for the ranking scores, and therefore we do not require the DCM to result in a simple graph. A similar observation was made in the paper by van der Hofstad et al.~\cite{Hof_Hoo_Van_05} when analyzing distances in the undirected CM.

Throughout the paper, we will use $\mathscr{F}_n = \sigma( ({\bf N}_n, {\bf D}_n, {\bf C}_n, {\bf Q}_n))$ to denote the sigma algebra generated by the extended bi-degree sequence, which does not include information about the random pairing. To simplify the notation, we will use $\mathbb{P}_n( \cdot ) = P( \cdot | \mathscr{F}_n)$ and $\mathbb{E}_n[ \cdot ] = E[ \cdot | \mathscr{F}_n]$ to denote the conditional probability and conditional expectation, respectively, given $\mathscr{F}_n$.

\section{Spectral ranking algorithms}
\label{S.spectral_ranking}

In this section we introduce the class of ranking algorithms that we analyze in this paper. Following the terminology from \cite{Boldi2014axioms}, these algorithms belong to the class of {\it spectral centrality measures}, which `compute the left dominant eigenvector of some matrix derived from the graph'.  We point out that the construction of the matrix of weights and the definition of the rank vector that we give in Section \ref{S.PR_def} is not particular to the DCM.

\subsection{Definition of the rank vector}
\label{S.PR_def}

The general class of spectral ranking algorithms we consider are determined by a matrix of weights $M = M(n) \in \mathbb{R}^{n\times n}$  and a personalization vector ${\bf Q} \in \mathbb{R}^n$. More precisely, given a directed graph with $({\bf N}_n, {\bf D}_n, {\bf C}_n, {\bf Q}_n)$ as its extended bi-degree sequence, we define the $(i,j)$th component of matrix $M$ as follows:
\begin{equation}\label{eq:M}
M_{i,j} = \begin{cases}
s_{ij} C_{i}, & \text{ if there are $s_{ij}$ edges from $i$ to $j$}, \\
0, & \text{ otherwise.}
\end{cases}\end{equation}

The rank vector ${\bf R} = (R_1, \dots, R_n)$ is then defined to be the solution to the system of equations
\begin{equation} \label{eq:LinearSystem}
{\bf R} = {\bf R} M + {\bf Q}.
\end{equation}

\begin{remark}
In the case of the PageRank algorithm, $C_i = c/D_i$, $Q_i = 1-c$ for all $i$, and the constant $0 < c < 1$ is the so-called damping factor.
\end{remark}

\subsection{Finitely many iterations} \label{S.FinitelyMany}

To solve the system of equations given in \eqref{eq:LinearSystem} we proceed via matrix iterations~\cite{Langville2011google}.  To initialize the process let ${\bf 1}$ be the (row) vector of ones in $\mathbb{R}^n$ and let ${\bf r}_0 = r_0 {\bf 1}$, with $r_0 \in \mathbb{R}$. Define
$${\bf R}^{(n,0)} = {\bf r}_0,$$
and for $k \geq 1$,
$${\bf R}^{(n,k)} = {\bf r}_0 M^k + \sum_{i=0}^{k-1} {\bf Q} M^i.$$
With this notation, we have that the solution ${\bf R}$ to \eqref{eq:LinearSystem}, provided it exists, can be written as
$${\bf R} = {\bf R}^{(n,\infty)}  = \sum_{i=0}^\infty {\bf Q} M^i.$$

We are interested in analyzing a randomly chosen coordinate of the vector ${\bf R}^{(n,\infty)}$. The first step, as described in Section~\ref{S.Methodology}, is to show that we can do so by using only finitely many matrix iterations. To this end note that
$${\bf R}^{(n,k)} - {\bf R}^{(n,\infty)} = {\bf r}_0 M^k - \sum_{i=k}^\infty {\bf Q} M^i = \left( {\bf r}_0 - \sum_{i=0}^\infty {\bf Q} M^i \right) M^k.$$
Moreover,
$$\left|\left| {\bf R}^{(n,k)} - {\bf R}^{(n,\infty)} \right|\right|_1 \leq \left|\left| {\bf r}_0 M^k \right|\right|_1 + \sum_{i=0}^\infty \left|\left| {\bf Q} M^{k+i} \right|\right|_1.$$

Next, note that for any row vector ${\bf y} = (y_1, y_2, \dots, y_n)$,
\begin{align*}
\left|\left| {\bf y} M^r \right|\right|_1 &\leq \sum_{j=1}^n \left|  {\bf y} (M^r)_{\bullet j} \right| \leq \sum_{j=1}^n \sum_{i=1}^n \left|y_i (M^r)_{ij} \right| \\
&= \sum_{i=1}^n |y_i| \sum_{j=1}^n |(M^r)_{ij}| = \sum_{i=1}^n |y_i| \cdot || M^r_{i\bullet} ||_1 \\
&\leq || {\bf y} ||_1 \left|\left| M^r \right|\right|_\infty,
\end{align*}
where $A_{i \bullet}$ and $A_{\bullet j}$ are the $i$th row and $j$th column, respectively, of matrix $A$, and $|| A||_\infty = \max_{1 \leq i \leq n} ||A_{i \bullet}||_1$ is the operator infinity norm. It follows that if we assume that $\max_{1 \leq i \leq n} |C_i| D_i \leq c$ for some $c \in (0,1)$, then we have
$$\left|\left| M^r \right|\right|_\infty \leq || M ||_\infty^r = \left( \max_{1 \leq i \leq n} |C_i| D_i \right)^r \leq c^r.$$
In this case we conclude that
\begin{align*}
\left|\left| {\bf R}^{(n,k)} - {\bf R}^{(n,\infty)} \right|\right|_1 &\leq || {\bf r}_0 ||_1 c^k + \sum_{i=0}^\infty || {\bf Q} ||_1c^{k+i} \\
&= |r_0| n c^k + || {\bf Q} ||_1 \frac{c^k}{1 - c}.
\end{align*}

Now note that all the coordinates of the vector ${\bf R}^{(n,k)} - {\bf R}^{(n,\infty)}$ have the same distribution, since by construction, the configuration model makes all permutations of the nodes' labels equally likely. Hence, the randomly chosen node may as well be the first node, and the error that we make by considering only finitely many iterations in its approximation is bounded in expectation by
\begin{align*}
\mathbb{E}_n \left[  \left| R^{(n,k)}_1 - R^{(n,\infty)}_1 \right| \right] &= \frac{1}{n} \mathbb{E}_n \left[ \left|\left| {\bf R}^{(n,k)} - {\bf R}^{(n,\infty)} \right|\right|_1 \right] \\
&\leq |r_0| c^k + \mathbb{E}_n \left[ || {\bf Q} ||_1  \right] \frac{c^k}{n(1-c)} \\
&= \left( |r_0| + \frac{1}{n(1-c)}\sum_{i=1}^n |Q_i| \right) c^k.
\end{align*}

It follows that if we let
\begin{equation} \label{eq:WeakestConditions}
B_n = \left\{ \max_{1 \leq i \leq n} |C_i| D_i \leq c, \, \frac{1}{n} \sum_{i=1}^n |Q_i| \leq H \right\}
\end{equation}
for some constants $c \in (0,1)$ and $H < \infty$, then Markov's inequality yields
\begin{align}
&P\left( \left. \left| R^{(n,k)}_1 - R^{(n,\infty)}_1\right| > n^{-\epsilon}  \right| B_n \right) \notag  \\
&= \frac{1}{P(B_n)} E\left[ 1(B_n) \mathbb{E}_n \left[ 1\left(  \left| R^{(n,k)}_1 - R^{(n,\infty)}_1\right| > n^{-\epsilon} \right)  \right] \right] \notag \\
&\leq \frac{1}{P(B_n)} E\left[ 1(B_n) n^\epsilon \mathbb{E}_n\left[ \left| R^{(n,k)}_1 - R^{(n,\infty)}_1 \right|  \right] \right] \notag \\
&\leq \left( |r_0| + \frac{1}{1-c} E\left[ \left. \frac{1}{n} \sum_{i=1}^n |Q_i| \right| B_n \right] \right) n^\epsilon c^k \notag \\
&\leq \left( |r_0| + \frac{H}{1-c} \right) n^\epsilon c^k . \label{eq:PowerIterations}
\end{align}

We have thus derived the following result.

\begin{prop} \label{P.PowerIterations}
Consider the directed configuration graph generated by the extended bi-degree sequence $({\bf N}_n, {\bf D}_n, {\bf C}_n, {\bf Q}_n)$ and let $B_n$ be defined according to \eqref{eq:WeakestConditions}. Then, for any $x_n \to \infty$ and any $k \geq 1$, we have
$$P\left( \left. \left| R^{(n,\infty)}_1 - R^{(n,k)}_1\right| > x_n^{-1}  \right| B_n \right) = O\left( x_n c^k \right)$$
as $n \to \infty$.
\end{prop}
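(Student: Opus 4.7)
The plan is to reduce the coordinate-wise tail bound for node~1 to an $\ell^1$-norm bound on the full residual vector $\mathbf{R}^{(n,k)} - \mathbf{R}^{(n,\infty)}$, which can then be controlled by a purely deterministic operator-norm estimate under the event $B_n$, after which conditional Markov does the rest.

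First, I would start from the telescoping identity
$$\mathbf{R}^{(n,k)} - \mathbf{R}^{(n,\infty)} \;=\; \mathbf{r}_0 M^k \;-\; \sum_{i=k}^{\infty} \mathbf{Q}\, M^i,$$
apply the triangle inequality, and bound each term via the standard submultiplicative estimate $\|\mathbf{y} A\|_1 \leq \|\mathbf{y}\|_1 \|A\|_\infty$ with $\|A\|_\infty$ the maximum-row-sum norm. Because the $i$-th row of $M$ has $\ell^1$-norm equal to $|C_i| D_i$, one has $\|M\|_\infty = \max_{1\leq i\leq n} |C_i| D_i$, which is at most $c < 1$ on $B_n$. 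Combined with the geometric sum in $\|M\|_\infty$, this yields the deterministic bound
$$\|\mathbf{R}^{(n,k)} - \mathbf{R}^{(n,\infty)}\|_1 \;\leq\; |r_0|\, n\, c^k \;+\; \|\mathbf{Q}\|_1 \, \frac{c^k}{1-c} \qquad \text{on } B_n.$$

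Next, I would pass from the aggregate $\ell^1$-bound to the single coordinate of interest via the identity
$$\mathbb{E}_n\!\left[|R_1^{(n,k)} - R_1^{(n,\infty)}|\right] \;=\; \frac{1}{n}\, \mathbb{E}_n\!\left[\|\mathbf{R}^{(n,k)} - \mathbf{R}^{(n,\infty)}\|_1\right],$$
which follows from the label exchangeability of the DCM: since the pairing permutation $\mathbf{x}_n$ is drawn uniformly from $S_{L_n}$, the joint distribution of the rank vector together with the extended attributes is invariant under simultaneous relabeling of nodes, so every coordinate of the residual has the same expected absolute value. Dividing the previous deterministic bound by $n$ and using $\frac{1}{n}\sum_{i=1}^n |Q_i| \leq H$ on $B_n$ gives
$$\mathbb{E}_n\!\left[|R_1^{(n,k)} - R_1^{(n,\infty)}|\right] \;\leq\; \left(|r_0| + \frac{H}{1-c}\right) c^k \qquad \text{on } B_n.$$

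Finally, conditional Markov's inequality with threshold $x_n^{-1}$ converts this into
$$P\!\left(|R_1^{(n,k)} - R_1^{(n,\infty)}| > x_n^{-1} \,\Big|\, B_n\right) \;\leq\; x_n \cdot \mathbb{E}\!\left[|R_1^{(n,k)} - R_1^{(n,\infty)}| \,\Big|\, B_n\right] \;=\; O(x_n\, c^k),$$
which is the desired estimate. None of the steps is genuinely difficult; the only point requiring a line of justification is the per-coordinate identity, which rests on the permutation-equivariance of the DCM construction. Everything else is a routine combination of operator-norm submultiplicativity, the geometric-series argument, and Markov's inequality.
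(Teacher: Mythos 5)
Your proposal matches the paper's argument step by step: the same telescoping decomposition of $\mathbf{R}^{(n,k)} - \mathbf{R}^{(n,\infty)}$, the same $\|\mathbf{y}A\|_1 \leq \|\mathbf{y}\|_1 \|A\|_\infty$ estimate with $\|M\|_\infty = \max_i |C_i| D_i \leq c$ on $B_n$, the same exchangeability observation to pass from the $\ell^1$-norm to coordinate~1, and the same conditional Markov inequality at the end. This is exactly the paper's proof.
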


This completes the first step of our approach. In the next section we will explain how to couple the graph, as seen from a randomly chosen node, with an appropriate branching tree.

\section{Construction of the graph and coupling with a branching tree} \label{S.CouplingWithTree}

The next step in our approach is to approximate the distribution of $R_1^{(n,k)}$ with the rank of the root node of a suitably constructed branching tree. To ensure that we can construct such a tree we require the extended bi-degree sequence to satisfy some further properties with high probability.  These properties are summarized in the following assumption.

\begin{assum} \label{A.Bidegree}
Let $({\bf N}_n, {\bf D}_n, {\bf C}_n, {\bf Q}_n)$ be an extended bi-degree sequence for which there exists constants $H, \nu_i > 0$, $i = 1, \dots, 5$, with
$$\mu := \nu_2/\nu_1, \quad \lambda := \nu_3/\nu_1  \qquad \text{and} \qquad \rho := \nu_5 \mu/\nu_1 < 1, $$
$0 < \kappa \leq 1$, and $0 < c, \gamma, \epsilon < 1$ such that the events
\begin{align*}
\Omega_{n,1} &=  \left\{  \left| \sum_{r=1}^n D_{r} - n \nu_1 \right| \leq n^{1-\gamma} \right\}, \\
\Omega_{n,2} &= \left\{ \left| \sum_{r=1}^n D_{r} N_{r} - n \nu_2 \right| \leq n^{1-\gamma} \right\}, \\
\Omega_{n,3} &= \left\{  \left| \sum_{r=1}^n D^2_{r} - n \nu_3 \right| \leq n^{1-\gamma} \right\}, \\
\Omega_{n,4} &= \left\{ \left| \sum_{r=1}^n D_{r}^{2+\kappa}  - n \nu_4 \right| \leq n^{1-\gamma} \right\}, \\
\Omega_{n,5} &= \left\{ \left| \sum_{r=1}^n |C_r| D_r  - n \nu_5 \right| \leq n^{1-\gamma}, \, \max_{1\leq r \leq n} |C_r| D_r \leq c \right\}, \\
\Omega_{n,6} &= \left\{  \sum_{r=1}^n |Q_r| \leq H n \right\}, \\
\end{align*}
satisfy as $n \to \infty$,
$$P\left( \Omega_n^c \right) = P\left( \left( \bigcap_{i=1}^6 \Omega_{n,i} \right)^c \right) = O\left(  n^{-\epsilon} \right).$$
\end{assum}

It is clear from \eqref{eq:WeakestConditions} that $\Omega_n \subseteq B_n$, hence Proposition~\ref{P.PowerIterations} holds under Assumption~\ref{A.Bidegree}. We also point out that all six conditions in the assumption are in the spirit of the Weak Law of Large Numbers, and are therefore general enough to be satisfied by many different constructions of the extended bi-degree sequence. As an example, we give in Section~\ref{S.Example} an algorithm based on sequences of i.i.d. random variables that satisfies Assumption~\ref{A.Bidegree}.

In Sections~\ref{S.Coupling_Terminology}--\ref{S.Ranking_TBT} we describe in detail how to construct a coupling of the directed graph $\mathcal{G}_n$ and its approximating branching tree. We start by explaining the terminology and notation in Section~\ref{S.Coupling_Terminology}, followed by the construction itself in Section \ref{S.Coupling_Construction}. Then, in Section \ref{S.Coupling_Lemma} we present the Coupling Lemma~\ref{L.CouplingBreaks}, which is the main result of Section \ref{S.CouplingWithTree}. Finally, Section \ref{S.Ranking_TBT} explains how to compute the rank of the root node in the coupled tree.

\subsection{Terminology and notation}
\label{S.Coupling_Terminology}

Throughout the remainder of the paper we will interchangeably refer to the $\{N_i\}$ as the in-degrees/number of offspring/number of inbound stubs, to the $\{D_i\}$ as the out-degrees/number of outbound links/number of outbound stubs, to the $\{C_i\}$ as the weights, and to the $\{Q_i\}$ as the personalization values. We will refer to these four characteristics of a node as the {\em node attributes}.

The fact that we are working with a directed graph combined with the presence of weights, means that we need to use a more general kind of tree in our coupling than the standard branching process typically used in the random graph literature. To this end, we will define a process we call a Thorny Branching Tree (TBT), where each individual (node) in the tree has a directed edge pointing towards its parent, and also a certain number of unpaired outbound links (pointing, say, to an artificial node outside of the tree). The name `thorny' is due to these unpaired outbound links, see Figure \ref{F.Tree}. We point out that the structure of the tree (i.e., parent-offspring relations) is solely determined by the number of offspring.

\begin{figure}[h,t]
\centering
\includegraphics[scale = 0.6, bb = 70 430 570 630, clip]{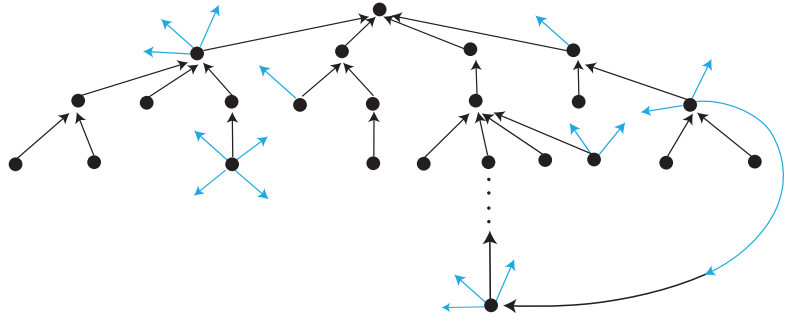}
\caption{Graph construction process. Unpaired outbound links are in blue.}\label{F.Tree}
\end{figure}

The simpler structure of a tree compared to a general graph allows for a more natural enumeration  of  its nodes. As usually in the context of branching processes, we let each node in the TBT have a label of the form ${\bf i} = (i_1, i_2, \dots, i_k) \in \mathcal{U}$, where $\mathcal{U} =  \bigcup_{k=0}^\infty (\mathbb{N}_+)^k$ is the set of all finite sequences of positive integers. Here, the convention is that $\mathbb{N}_+^0 = \{ \emptyset \}$ contains the null sequence $\emptyset$. Also, for ${\bf i}=(i_1)$ we simply write ${\bf i} = i_1$, that is, without the parenthesis.
Note that this form of enumeration gives the complete lineage of each individual in the tree.

We will use the following  terminology and notation throughout the paper.

\begin{defn}
We say that a node $i$ in the graph (resp. TBT) is at distance $k$ of the first (resp. root) node if it can reach the first (resp. root) node in $k$ steps, but not in any less than $k$ steps.
\end{defn}

In addition, for $r \geq 0$, we define on the graph/tree the following processes:
\begin{itemize} \itemsep 0pt
\item $A_r$: set of nodes in the graph at distance $r$ of the first node.

\item $\hat A_r$: set of nodes in the tree at distance $r$ of the root node ($\hat A_r$ is also the set of nodes in the $r$th generation of TBT, with the root node being generation zero).

\item $Z_r$: number of inbound stubs of all the nodes in the graph at distance $r$ of the first node ($Z_r \geq |A_{r+1}|$).

\item $\hat Z_r$: number of inbound stubs of all the nodes in generation $r$ of the TBT ($\hat Z_r = |\hat A_{r+1}|$).

\item $V_r$: number of outbound stubs of all the nodes in the graph at distance $r$ of the first node.

\item $\hat V_r$: number of outbound stubs of all the nodes in generation $r$ of the TBT.

\end{itemize}

Finally, given the extended bi-degree sequence $({\bf N}_n, {\bf D}_n, {\bf C}_n, {\bf Q}_n)$, we introduce two empirical distributions that will be used in the construction of the coupling. The first one describes the attributes of a randomly chosen node:
\begin{align}
f_{n}^*(i,j, s, t)
&= \sum_{k=1}^n \Indicator(N_k = i, D_k = j, C_k = s, Q_k = t) \mathbb{P}_n(\text{node $k$ is sampled}) \notag \\
&= \frac{1}{n} \sum_{k=1}^n \Indicator(N_k = i, D_k = j, C_k = s, Q_k = t). \label{eq:FirstNodeDistr}
\end{align}

The second one, corresponds to the attributes of a node that is chosen by sampling uniformly at random from all the $L_n$ outbound stubs:
\begin{align}
f_n(i,j,s,t)
&= \sum_{k=1}^n \Indicator(N_k = i, D_k = j, C_k = s, Q_k = t) \mathbb{P}_n(\text{an outbound stub from node $k$ is sampled}) \notag \\
&=  \sum_{k=1}^n \Indicator(N_k = i, D_k = j, C_k = s, Q_k = t) \frac{D_k}{L_n}. \label{eq:randomJointDistr}
\end{align}
Note that this is a size-biased distribution, since nodes with more outbound stubs are more likely to be chosen, whereas nodes with no outbound stubs (dangling nodes) cannot be chosen.

\subsection{Construction of the coupling}
\label{S.Coupling_Construction}

Given an extended bi-degree sequence $({\bf N}_n, {\bf D}_n, {\bf C}_n, {\bf Q}_n)$ we now explain how to construct the graph $\mathcal{G}_n$ and its coupled TBT through a breadth-first exploration process. From this point onwards we will ignore the implicit numbering of the nodes in the definition of the extended bi-degree sequence and rename them according to the order in which they appear in the graph exploration process.

To keep track of which outbound stubs have already been matched we borrow the approach used in \cite{Hof_Hoo_Van_05} and label them 1, 2, or 3 according to the following rules:
\begin{list}{}{ \itemsep 0pt \leftmargin 5mm}
\item [1.] Outbound stubs with label 1 are stubs belonging to a node that is not yet attached to the graph.
\item [2.] Outbound stubs with label 2 belong to nodes that are already part of the graph but that have not yet been paired with an inbound stub.
\item [3.] Outbound stubs with label 3 are those which have already been paired with an inbound stub and now form an edge in the graph.
\end{list}

The graph $\mathcal{G}_n$ is constructed as follows. Right before the first node is sampled, all outbound stubs are labeled 1.  To start the construction of the graph, we choose randomly a node (all nodes with the same probability) and call it node 1. The attributes of this first node, denoted by $(N_1,D_1,C_1,Q_1)$, are sampled from distribution $(\ref{eq:FirstNodeDistr})$.

After the first node is chosen, its $D_1$ outbound stubs are labeled 2. We then proceed to pair the first of the $Z_0=N_1$ inbound stubs of the first node with a randomly chosen outbound stub. The corresponding node is attached to the graph by forming an edge pointing to node 1 using the chosen outbound stub, which receives a label 3, and all the remaining outbound stubs from the new node are labeled 2. Note that it is possible that the chosen node is node 1 itself, in which case the pairing forms a self-loop and no new nodes are added to the graph. We continue in this way until all $Z_0$ inbound stubs of node 1 have been paired with randomly chosen outbound stubs. Since these outbound stubs are sampled independently and with replacement from all the possible $L_n$ outbound stubs, this corresponds to drawing the node attributes independently from the random distribution \eqref{eq:randomJointDistr}. Note that in the construction of the graph any unfeasible matches will be discarded, and therefore the attributes of nodes in $\mathcal{G}_n$ do not necessarily have distribution \eqref{eq:randomJointDistr}, but rather have the conditional distribution given the pairing was feasible. We will use the vector $(N_i, D_i, C_i, Q_i)$ to denote the attributes of the $i$th node to be added to the graph.

In general, the $k$th iteration of this process is completed when all $Z_{k-1}$ inbound stubs have been matched with an outbound stub, and the corresponding node attributes have been assigned. The process ends when all $L_n$ inbound stubs have been paired. Note that whenever an outbound stub with label 2 is chosen a cycle or a double edge is formed in the graph.

Next, we explain how the TBT is constructed. To distinguish the attribute vectors of nodes in the TBT from those of nodes in the graph, we denote them by $(\hat N_{\bf i}, \hat D_{\bf i}, \hat C_{\bf i}, \hat Q_{\bf i})$, ${\bf i} \in \mathcal{U}$. We start with the root node (node $\emptyset$) that has the same attributes  as node~1 in the graph: $(\hat N_\emptyset,\hat D_\emptyset,\hat C_\emptyset,\hat Q_\emptyset)\equiv (N_1,D_1,C_1,Q_1)$, sampled from distribution \eqref{eq:FirstNodeDistr}. Next, for $k \geq 1$, each of the $\hat Z_{k-1}$ individuals in the $k$th generation will independently have offspring, outbound stubs, weight and personalization value according to the joint distribution $f_n(i,j,s,t)$ given by \eqref{eq:randomJointDistr}.

Now, we explain how the coupling with the graph, i.e., the simultaneous construction of the graph and the TBT, is done.
\begin{itemize}
\item[1)] Whenever an outbound stub is sampled randomly in an attempt to add an edge to $\mathcal{G}_n$, then, independently of the stub's label, a new offspring is added to the TBT. This is done to maintain the branching property (i.i.d. node attributes). In particular, if the chosen outbound stub belongs to node $j$, then the new offspring in the TBT will have $D_j - 1$ outbound stubs (which will remain unpaired), $N_j$ inbound stubs (number of offspring), weight $C_j$, and personalization value $Q_j$.

\item[2)] If an outbound stub with label 1 is chosen, then both the graph and the TBT will connect the chosen outbound stub to the inbound stub being matched, resulting in a node being added to the graph and an offspring being born to its parent. We then update the labels by giving a 2 label to all the `sibling' outbound stubs of the chosen outbound stub, and a 3 label to the chosen outbound stub itself.

\item[3)] If an outbound stub with label 2 is chosen it means that its corresponding node already belongs to the graph, and a cycle, self-loop, or multiple edge is created. We then relabel the chosen outbound stub with a 3. An offspring is born in the TBT according to 1).

\item[4)] If an outbound stub with label 3 is chosen it means that the chosen outbound stub has already been matched. In terms of the construction of the graph, this case represents a failed attempt to match the current inbound stub, and we have to keep sampling until we draw an outbound stub with label 1 or 2.  Once we do so, we update the labels according to the rules given above. An offspring is born in the TBT according to 1).
\end{itemize}

Note that as long as we do not sample any outbound stub with label 2 or 3, the graph $\mathcal{G}_n$ and the TBT are identical. Once we draw the first outbound stub with label 2 or 3 the processes $Z_k$ and $\hat Z_k$ may start to disagree. The moment this occurs we say that the coupling has been broken. Nonetheless, we will continue with the pairing process following the rules given above until all $L_n$ inbound stubs have been paired. The construction of the TBT also continues in parallel by keeping the synchronization of the pairing whenever the inbound stub being matched belongs to a node that is both in the graph and the tree. If the pairing of all $L_n$ inbound stubs is completed after $k$ iterations of the process, then we will have completed $k$ generations in the TBT.
Moreover, up to the time the coupling breaks, a node ${\bf i} \in \hat A_k$ is also the $j$th node to be added to the graph,  where:
$$j = 1 + \sum_{r=0}^{k-2} \hat Z_r + \sum_{s=1}^{i_{k-1} - 1} \hat N_{(i_1, \dots, i_{k-2}, s)} + i_k,$$
with the convention that $\sum_{r=a}^b x_r = 0$ if $b < a$.

\begin{defn}
Let $\tau$ be the number of generations in the TBT that can be completed before the first outbound stub with label 2 or 3 is drawn, i.e., $\tau = k$ if and only if the first inbound stub to draw an outbound stub with label 2 or 3 belonged to a node ${\bf i} \in \hat A_k$.
\end{defn}

The main result in this section consists in showing that provided the extended bi-degree sequence $({\bf N}_n, {\bf D}_n, {\bf C}_n, {\bf Q}_n)$ satisfies Assumption \ref{A.Bidegree}, the coupling breaks only after a number of generations that is of order $\log n$, which combined with Proposition \ref{P.PowerIterations} will allow us to approximate the rank of a randomly chosen node in the graph with the rank of the root node of the coupled TBT.

\subsection{The coupling lemma}
\label{S.Coupling_Lemma}

It follows from the construction in Section~\ref{S.Coupling_Construction} that, before the coupling breaks, the neighborhood of node~1 in $\mathcal{G}_n$ and of the root node in the TBT are identical. Recall also from Proposition~\ref{P.PowerIterations} that we only need a finite number $k$ of matrix iterations to approximate the elements of the rank vector to any desired precision. Furthermore, the weight matrix $M$ is such that the elements $(M^r)_{i,1}$, $1 \leq i \leq n$, $1 \leq r \leq k$, depend only on the $k$-neighborhood of node 1. Hence, if the coupling holds for $\tau>k$ generations, then the rank score of node~1 in $\mathcal{G}_n$ is exactly the same as that of the root node of the TBT restricted to those same $k$ generations. The following coupling lemma will allow us to complete the appropriate number of generations in the tree to obtain the desired level of precision in Proposition~\ref{P.PowerIterations}.  Its proof is rather technical and is therefore postponed to Section~\ref{S.ProofCoupling}.

\begin{lemma} \label{L.CouplingBreaks}
Suppose $({\bf N}_n, {\bf D}_n, {\bf C}_n, {\bf Q}_n)$ satisfies Assumption \ref{A.Bidegree}. Then,
\begin{itemize}
\item for any $1 \leq k \leq h \log n$ with $0 < h < 1/(2\log \mu)$, if $\mu > 1$,
\item for any $1 \leq k \leq n^b$ with $0 < b < \min\{ 1/2, \gamma\}$, if $\mu \leq 1$,
\end{itemize}
we have
$$P\left( \left. \tau \leq k \right| \Omega_n \right) = \begin{cases} O\left( (n/\mu^{2k})^{-1/2} \right), & \mu > 1,\\
O\left( (n/k^2)^{-1/2} \right), & \mu = 1,\\
O\left( n^{-1/2} \right), & \mu < 1, \end{cases}$$
as $n \to \infty$.
\end{lemma}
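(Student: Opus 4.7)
The coupling breaks during the first $k$ generations iff, in rounds $0, 1, \ldots, k$ of the pairing (round $r$ matches the $\hat Z_r$ inbound stubs of generation-$r$ nodes), some outbound-stub draw returns a stub already carrying label $2$ or $3$ — i.e., one belonging to a node already attached to the graph. The plan is to bound this probability by a union bound over all such draws, combined with a Markov-type truncation applied to the total number $S$ of bad stubs.

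Set $T := \sum_{r=0}^{k} \hat Z_r$ (the total number of draws) and $S := \sum_{r=0}^{k+1} \hat V_r$ (an upper bound on the number of bad stubs at any moment of the construction; the sum runs through generation $k+1$ because offspring attached during round $k$ immediately contribute label-$2$ stubs). Conditionally on $\mathscr{F}_n$ and on the TBT attributes, each draw — uniform over label-$1$-and-$2$ stubs after label-$3$ rejections — hits a bad stub with probability at most $2S/L_n$, since $L_n \ge n\nu_1/2$ on $\Omega_n$ by $\Omega_{n,1}$. A union bound over the $T$ draws, followed by a threshold $t>0$, gives on $\Omega_n$
\[
\mathbb{P}_n(\tau\le k) \;\le\; \mathbb{P}_n(S>t) + \frac{2t\,\mathbb{E}_n[T]}{L_n} \;\le\; \frac{\mathbb{E}_n[S]}{t} + \frac{2t\,\mathbb{E}_n[T]}{L_n}.
\]
Optimizing yields $\mathbb{P}_n(\tau\le k) \le C\sqrt{\mathbb{E}_n[S]\,\mathbb{E}_n[T]/L_n}$ on $\Omega_n$, and $P(\tau\le k\mid\Omega_n)$ inherits the same bound by integrating over $\Omega_n$.

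For the first moments of $T$ and $S$: the TBT is Galton--Watson-like, with root attributes from \eqref{eq:FirstNodeDistr} and all subsequent-generation attributes i.i.d.\ from the size-biased distribution \eqref{eq:randomJointDistr}. On $\Omega_n$, using $\Omega_{n,1}$--$\Omega_{n,3}$, the size-biased marginal means satisfy $\mathbb{E}_n[\hat N] = \mu + O(n^{-\gamma})$ and $\mathbb{E}_n[\hat D] = \lambda + O(n^{-\gamma})$. Iterating Wald's identity through the generations gives $\mathbb{E}_n[\hat Z_r] \le C_1\mu^r$ and $\mathbb{E}_n[\hat V_r] \le C_2\mu^r$ on $\Omega_n$, where the accumulated per-step drift $O(k\,n^{-\gamma})$ is negligible under the stated constraints on $k$. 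Summing: for $\mu>1$, $\mathbb{E}_n[T]$ and $\mathbb{E}_n[S]$ are both $O(\mu^k)$, giving $O(\mu^k/\sqrt{n}) = O((n/\mu^{2k})^{-1/2})$; for $\mu=1$ both are $O(k)$, giving $O(k/\sqrt{n}) = O((n/k^2)^{-1/2})$; for $\mu<1$ both are $O(1)$, giving $O(n^{-1/2})$.

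The main obstacle is formalizing the informal union bound: the outbound-stub draws during the coupled construction are conditionally near-uniform only after accounting for label-$3$ rejections and a dynamically growing set of label-$2$ stubs within each round, so one must verify that the per-draw bad probability $2S/L_n$ remains a valid upper bound uniformly across all $T$ draws, using $\Omega_{n,4}$ and $\Omega_{n,5}$ to ensure $S \ll L_n$ throughout. A secondary issue is controlling the accumulated $O(n^{-\gamma})$ drift in the Wald iteration over the full $O(\log n)$ generations in the supercritical case — this is exactly where the rate $\gamma$ in Assumption~\ref{A.Bidegree} enters, and it underlies the constraints on $h$ and $b$ in the statement.
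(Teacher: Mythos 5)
Your proposal is structurally sound and matches the paper's rates, but uses a genuinely different truncation mechanism. The paper also starts with a union bound over first draws (per-round: $P(\tau=s\mid\cdot)\le \hat Z_s P_s$ with $P_s=\tfrac{1}{L_n}\sum_{r\le s}\hat V_r$), and it also must avoid second moments of $\hat Z$; the paper's route is to truncate the generation sizes via Doob's maximal inequality applied to the nonnegative martingale $X_r=\hat Z_r/(\mu_n^*\mu_n^r)$ (that is the content of the auxiliary Lemma~\ref{L.Doob}), keeping $\mathbb{E}_n[P_s]$ as a first-moment computation, and then sums $\sum_{s\le k}\mu^s x_n\,\mathbb{E}_n[P_s]+P(F_k^c\mid\Omega_n)$. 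You instead aggregate everything into $T=\sum_r\hat Z_r$ and $S=\sum_r\hat V_r$, truncate $S$ by Markov, keep $\mathbb{E}_n[T]$ as the first-moment factor, and optimize the threshold $t$. Both routes use only first moments of $\hat Z_r$ and $\hat V_r$ (with $\mathbb{E}_n[\hat V_r\mid\hat Z_{r-1}]=\hat Z_{r-1}\lambda_n$), and both produce the $\sqrt{\mu^{2k}/n}$, $k/\sqrt{n}$, $1/\sqrt{n}$ rates; yours is arguably slightly more transparent because the threshold optimization makes the square-root shape of the bound visible at a glance, while the paper's per-round accounting is closer to what is actually adapted to the exploration filtration.

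A few things to be careful about if you write this up. First, the truncation event $\{S\le t\}$ is \emph{not} adapted to the exploration filtration, since $S$ depends on offspring attributes drawn throughout round $k$; the correct way to carry out the truncation is to note that $\{S\le t\}\subseteq\{m_i\le t\}$ for every draw $i$, where $m_i$ is the (adapted, nondecreasing) count of label-$2$-or-$3$ stubs just before draw $i$, and then bound $\mathbb{E}_n[1(m_i\le t)\,m_i/L_n]\le t/L_n$ per draw. This is exactly the kind of adaptedness issue that makes the paper prefer the round-by-round version, where $P_s$ is measurable with respect to the relevant sigma-algebra. Second, you do not need $\Omega_{n,4}$ or $\Omega_{n,5}$: the lemma's proof (yours or the paper's) only uses $\Omega_{n,1}$, $\Omega_{n,2}$, $\Omega_{n,3}$ to control $L_n$, $\mu_n=\mathbb{E}_n[\hat N_1]$ and $\lambda_n=\mathbb{E}_n[\hat D_1]$. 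Third, the factor of $2$ in $2S/L_n$ is unnecessary: $\tau$ is defined by the first \emph{draw} that hits a label-$2$-or-$3$ stub, and each such first draw is uniform over all $L_n$ outbound stubs, so the per-draw bad probability is simply $m_i/L_n$; there is no rejection-sampling correction needed for this event, and hence no $S\ll L_n$ condition to verify. With those corrections, your argument is correct and gives the same bound as the paper.
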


\begin{remark}
The constant $\mu $ was defined in Assumption \ref{A.Bidegree}, and it corresponds to the limiting expected number of offspring that each node in the TBT (with the exception of the root node) will have. The coupling between the graph and the TBT will hold for any $\mu > 0$.
\end{remark}

We conclude from Lemma~\ref{L.CouplingBreaks} that if $\hat R^{(n,k)} := \hat R^{(n,k)}_{\emptyset}$ denotes the rank of the root node of the TBT restricted to the first $k$ generations, then, for any $\delta > 0$,
$$P\left( \left. \left| R_1^{(n,k)} - \hat R^{(n,k)} \right| > n^{-\delta} \right| \Omega_n \right) \leq P( \tau < k | \Omega_n) := \varphi(k,n).$$
Note that the super index $n$ does not refer to the number of nodes in the tree, and is being used only in the definition of the distributions $f_n^*$ and $f_n$ (given in \eqref{eq:FirstNodeDistr} and \eqref{eq:randomJointDistr}, respectively).

This observation, combined with Proposition \ref{P.PowerIterations}, implies that if we let $k_n = \lceil h \log n \rceil$, when $\mu > 1$, and $k_n = n^\varepsilon$, when $\mu \leq 1$, where $h = (1-\varepsilon)/(2\log \mu)$ and $0 < \varepsilon < \min\{1/3, \gamma\}$, then
\begin{align}
P\left( \left. \left| R_1^{(n,\infty)} - \hat R^{(n,k_n)} \right| > n^{-\delta} \right| \Omega_n \right) &\leq P\left( \left. \left| R_1^{(n,\infty)} - R_1^{(n,k_n)} \right| > n^{-\delta}/2 \right| \Omega_n \right) \notag \\
&\hspace{5mm} + P\left( \left. \left| R_1^{(n,k_n)} - \hat R^{(n,k_n)} \right| > n^{-\delta}/2 \right| \Omega_n \right) \notag \\
&= O\left( n^\delta c^{k_n} + \varphi(k_n, n) \right) \notag \\
&= O\left(  n^{\delta - h |\log c|} +  n^{-\varepsilon/2}   \right) \label{eq:TreeApprox}.
\end{align}

In view of \eqref{eq:TreeApprox}, analyzing the distribution of $R_1^{(n,k)}$ in the graph reduces to analyzing the rank of the root node of the coupled TBT, $\hat R^{(n,k)}$. In the next section, we compute $\hat R^{(n,k)}$ by relating it to a linear process constructed on the TBT.

\subsection{Computing the rank of nodes in the TBT}
\label{S.Ranking_TBT}

In order to compute $\hat R^{(n,k)}$ we need to introduce a new type of weights. To simplify the notation, for ${\bf i} = (i_1, \dots, i_k)$ we will use $({\bf i}, j) = (i_1,\dots, i_k, j)$ to denote the index concatenation operation; if ${\bf i} = \emptyset$, then $({\bf i}, j) = j$. Each node ${\bf i}$ is then assigned a weight $\hat \Pi_{\bf i}$ according to the recursion
$$\hat \Pi_{\emptyset} \equiv 1  \qquad \text{and} \qquad \hat \Pi_{({\bf i}, j)} = \hat \Pi_{\bf i} \hat C_{({\bf i}, j)}, \quad {\bf i} \in \mathcal{U}.$$
Note that the $\hat \Pi_{\bf i}$'s are the products of all the weights $\hat C_{\bf j}$ along the path leading to node ${\bf i}$, as depicted in Figure \ref{F.WBT}.

Next, for each fixed $k \in \mathbb{N}$ and each node ${\bf i}$ in the TBT define $\hat R^{(n,k)}_{\bf i}$ to be the rank of node ${\bf i}$ computed on the subtree that has ${\bf i}$ as its root and that is restricted to having only $k$ generations, with each of the $|\hat A_k|$ nodes having rank $r_0$. In mathematical notation,
\begin{align} \label{eq:TreeIteration}
\hat R_{\bf i}^{(n,k)} &= \sum_{j=1}^{\hat N_{\bf i}} \hat C_{({\bf i}, j)} \hat R^{(n,k-1)}_{({\bf i}, j)} + \hat Q_{\bf i}, \qquad k \geq 1, \qquad \hat R_{\bf j}^{(n,0)} = r_0.
\end{align}
Iterating \eqref{eq:TreeIteration} gives
\begin{align}
\hat R^{(n,k)} &= \sum_{{\bf i} \in \hat A_1} \hat \Pi_{\bf i} \hat R^{(n,k-1)}_{\bf i} + \hat Q_\emptyset  =   \sum_{{\bf i} \in \hat A_1} \hat \Pi_{\bf i} \left(  \sum_{j = 1}^{\hat N_{\bf i}} \hat C_{({\bf i},  j)} \hat R^{(n,k-2)}_{({\bf i}, j)} + \hat Q_{\bf i}  \right) + \hat  Q_\emptyset \notag \\
&= \sum_{{\bf i} \in \hat A_2} \hat \Pi_{\bf i} \hat R_{\bf i}^{(n,k-2)} + \sum_{{\bf i} \in \hat A_1} \hat \Pi_{\bf i} \hat Q_{\bf i} + \hat Q_\emptyset = \cdots = \sum_{{\bf i} \in \hat A_k} \hat \Pi_{\bf i} r_0 + \sum_{s=0}^{k-1} \sum_{{\bf i} \in \hat A_s} \hat \Pi_{\bf i} \hat Q_{\bf i} . \label{eq:WBPrepresentation}
\end{align}

The last step in our proof of the main result is to identify the limit of $\hat R^{(n,k_n)}$ as $n \to \infty$, for a suitable chosen $k_n\to\infty$. This is done in the next section.

\begin{center}
\begin{figure}[t]
\begin{picture}(480,110)(-40,0)
\put(0,0){\includegraphics[scale = 0.75, bb = 30 560 510 695, clip]{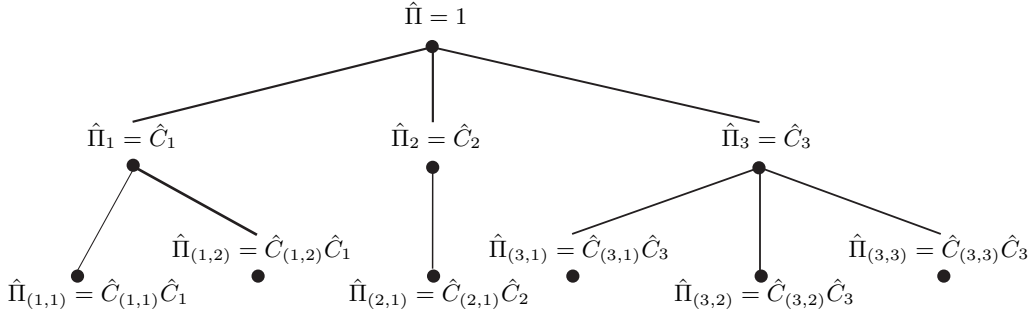}}
\put(150,105){\footnotesize $\hat \Pi = 1$}
\put(30,59){\footnotesize $\hat \Pi_{1} = \hat C_1$}
\put(145,59){\footnotesize $\hat \Pi_{2} = \hat C_2$}
\put(270,59){\footnotesize $\hat \Pi_{3} = \hat C_3$}
\put(0,0){\footnotesize $\hat \Pi_{(1,1)} = \hat C_{(1,1)} \hat C_1$}
\put(62,17){\footnotesize $\hat \Pi_{(1,2)} = \hat C_{(1,2)} \hat C_1$}
\put(129,0){\footnotesize $\hat \Pi_{(2,1)} = \hat C_{(2,1)} \hat C_2$}
\put(182,17){\footnotesize $\hat \Pi_{(3,1)} = \hat C_{(3,1)} \hat C_3$}
\put(252,0){\footnotesize $\hat \Pi_{(3,2)} = \hat C_{(3,2)} \hat C_3$}
\put(318,17){\footnotesize $\hat \Pi_{(3,3)} = \hat C_{(3,3)} \hat C_3$}
\end{picture}
\caption{Weighted tree.}\label{F.WeightedTree} \label{F.WBT}
\end{figure}
\end{center}

\section{Coupling with a weighted branching process} \label{S.Main}

The last step in the derivation of our approximation for the rank of a randomly chosen node in the graph $\mathcal{G}_n$ is to substitute the rank of the root node in the TBT, which is defined with respect to empirical distributions based on the extended bi-degree sequence $({\bf N}_n, {\bf D}_n, {\bf C}_n, {\bf Q}_n)$, with a limiting random variable independent of the size of the graph, $n$.

The appropriate limit will be given in terms of a solution to a certain stochastic fixed-point equation (SFPE). The appeal of having such a representation is that these solutions have been thoroughly studied in the WBPs literature, and in many cases exact asymptotics describing their tail behavior are available \cite{Jel_Olv_10, Jel_Olv_12b, Olvera_12a}. We will elaborate more on this point after we state our main result.

As already mentioned in Section \ref{sec:overview}, our main result shows that
$$R_1^{(n,\infty)} \Rightarrow  \mathcal{R}^*$$
as $n \to \infty$, where $\mathcal{R}^*$ can be written in terms of the so-called endogenous solution to a linear SFPE. Before we write the expression for $\mathcal{R}^*$ we will need to introduce a few additional concepts.

\subsection{The linear branching stochastic fixed-point equation}
\label{S.fixed-point}

We define the linear branching SFPE according to:
\begin{equation} \label{eq:generalSFPE}
\mathcal{R} \stackrel{\mathcal{D}}{=} \sum_{j=1}^{\mathcal{N}} \mathcal{C}_j \mathcal{R}_j + \mathcal{Q},
\end{equation}
where $(\mathcal{N}, \mathcal{Q}, \mathcal{C}_1, \mathcal{C}_2, \dots )$ is a real-valued random vector with $\mathcal{N} \in \mathbb{N} \cup \{ \infty \}$, $P( |\mathcal{Q}| > 0) > 0$, and the $\{\mathcal{R}_i\}$ are i.i.d. copies of $\mathcal{R}$, independent of the vector $(\mathcal{N}, \mathcal{Q}, \mathcal{C}_1, \mathcal{C}_2, \dots )$.  The vector $(\mathcal{N}, \mathcal{Q}, \mathcal{C}_1, \mathcal{C}_2, \dots)$ is often referred to as the generic branching vector, and in the general setting is allowed to be arbitrarily dependent with the weights $\{\mathcal{C}_i\}$ not necessarily identically distributed. This equation is also known as the ``smoothing transform" \cite{Holl_Ligg_81, Durr_Ligg_83, Als_Big_Mei_10, Alsm_Mein_10a}.

In the context of ranking algorithms, we can identify $\mathcal{N}$ with the in-degree of a node, $\mathcal{Q}$ with its personalization value, and the $\{\mathcal{C}_i\}$ with the weights of the neighboring nodes pointing to it. We now explain how to construct a solution to \eqref{eq:generalSFPE}.

Similarly as what we did in Section \ref{S.Ranking_TBT} and using the same notation introduced there, we construct a weighted tree using a sequence  $\{ (\mathcal{N}_{\bf i}, \mathcal{Q}_{\bf i}, \mathcal{C}_{({\bf i},1)}, \mathcal{C}_{({\bf i},2)}, \dots) \}_{{\bf i} \in \mathcal{U}}$ of i.i.d. copies of the vector $(\mathcal{N}, \mathcal{Q}, \mathcal{C}_1, \mathcal{C}_2, \dots)$ to define its structure and its node attributes.  This construction is known in the literature as a WBP \cite{Rosler_93}. Next, let $\mathcal{A}_k$ denote the number of individuals in the $k$th generation of the tree, and to each node {\bf i} in the tree assign a weight $\Pi_{\bf i}$ according to the recursion
$$\Pi_\emptyset \equiv 1 \qquad \text{and} \qquad \Pi_{({\bf i}, j)} = \Pi_{\bf i} \, \mathcal{C}_{({\bf i}, j)}, \quad {\bf i} \in \mathcal{U}.$$
Then, the random variable formally defined as
\begin{equation} \label{eq:EndogenousSol}
\mathcal{R} :=  \sum_{k=0}^\infty \sum_{{\bf i} \in \mathcal{A}_k} \Pi_{\bf i} \mathcal{Q}_{\bf i}
\end{equation}
is called the endogenous solution to \eqref{eq:generalSFPE}, and provided $E\left[ \sum_{i=1}^{\mathcal{N}} |\mathcal{C}_i|^\beta \right] < 1$ for some $0 < \beta \leq 1$, it is well defined (see \cite{Jel_Olv_12b}, Lemma 4.1).  The name ``endogenous" comes from its explicit construction in terms of the weighted tree. We point out that equation \eqref{eq:generalSFPE} has in general multiple solutions \cite{Alsm_Mein_10a, Alsm_Mein_10b}, so it is important to emphasize that the one considered here is the endogenous one.

Comparing \eqref{eq:WBPrepresentation} and \eqref{eq:EndogenousSol} suggests that $\hat R^{(n,k_n)}$ should converge to $\mathcal{R}$ provided the distribution of the attribute vectors in the TBT converges to the distribution of the generic branching vector in the WBP, but in order to formalize this heuristic there are two difficulties that we need to overcome. The first one is that the TBT was defined using a sequence of (conditionally) independent vectors of the form $\{ (\hat N_{\bf i}, \hat Q_{\bf i}, \hat C_{\bf i})\}_{{\bf i} \in \mathcal{U}}$, where by construction (see Assumption \ref{A.Bidegree} and \eqref{eq:randomJointDistr}) the generic attribute vector $(\hat N_1, \hat Q_1, \hat C_1)$ is dependent. Note that this implies that the vectors $(\hat N_{\bf i}, \hat Q_{\bf i}, \hat C_{({\bf i}, 1)}, \hat C_{({\bf i}, 2)}, \dots)$ and $\{(\hat N_{({\bf i},j)}, \hat Q_{({\bf i},j)}, \hat C_{({\bf i}, j,1)}, \hat C_{({\bf i}, j,2)}, \dots)\}_{j \geq 1}$ are dependent through the dependence between $\hat N_{({\bf i},j)}$ and $\hat C_{({\bf i},j)}$, which destroys the branching property of the WBP. The second problem is that the root node of the TBT has a different distribution from the rest of the nodes in the tree.

It is therefore to be expected that we will need something more than weak convergence of the node attributes to obtain the convergence of $\hat R^{(n,k_n)}$ we seek. To solve the first problem we will require that $(\hat N_1, \hat Q_1, \hat C_1)$ converges to $(\mathcal{N}, \mathcal{Q}, \mathcal{C})$ with $\mathcal{C}$ independent of  $(\mathcal{N}, \mathcal{Q})$. Note that this will naturally lead to the $\{ \mathcal{C}_i\}$ being i.i.d. in \eqref{eq:generalSFPE}.  To solve the second problem we will allow the attributes of the root node in the TBT to converge to their own limit $(\mathcal{N}_0, \mathcal{Q}_0)$. In view of these observations we can now identify the limit of $\hat R^{(n,k_n)}$ to be:
\begin{equation} \label{eq:LastLimit}
\mathcal{R}^* := \sum_{i=1}^{\mathcal{N}_0} \mathcal{C}_i \mathcal{R}_i + \mathcal{Q}_0,
\end{equation}
where the $\{\mathcal{R}_i\}$ are i.i.d. copies of $\mathcal{R}$, as given by \eqref{eq:EndogenousSol}, independent of the vector $(\mathcal{N}_0, \mathcal{Q}_0, \{ \mathcal{C}_i\})$ with $\{\mathcal{C}_i\}$ i.i.d. and independent of $(\mathcal{N}_0, \mathcal{Q}_0)$.  The appropriate condition ensuring that $\mathcal{R}^*$ is the correct limit is  given in terms of the Kantorovich-Rubinstein distance (also known as the minimal $l_1$ distance or the Wasserstein distance).

\begin{defn} \label{D.Wasserstein}
Consider the metric space $(\mathbb{R}^d, || \cdot ||_1)$, where $||{\bf x}||_1$ is the $l_1$ norm in $\mathbb{R}^d$. Let $M(\mu, \nu)$ denote the set of joint probability measures on $\mathbb{R}^d \times \mathbb{R}^d$ with marginals $\mu$ and $\nu$. Then, the Kantorovich-Rubinstein distance between $\mu$ and $\nu$ is given by
$$d_1(\mu, \nu) = \inf_{\pi \in M(\mu, \nu)} \int_{\mathbb{R}^d \times \mathbb{R}^d} || {\bf x} - {\bf y} ||_1 \, d \pi({\bf x}, {\bf y}).$$
\end{defn}

We point out that $d_1$ is only strictly speaking a distance when restricted to the subset of measures
$$\mathscr{P}_1(\mathbb{R}^d) := \left\{ \mu \in \mathscr{P}(\mathbb{R}^d): \int_{\mathbb{R}^d} || {\bf x} - {\bf x}_0 ||_1 \, d\mu( {\bf x}) < \infty \right\},$$
for some ${\bf x}_0 \in \mathbb{R}^d$, where $\mathscr{P}(\mathbb{R}^d)$ is the set of Borel probability measures on $\mathbb{R}^d$. We refer the interested reader to \cite{Villani_2009} for a thorough treatment of this distance, since Definition \ref{D.Wasserstein} gives only a special case.

An important property of the Kantorovich-Rubinstein distance is that if $\{ \mu_k \}_{k \in\mathbb{N}}$ is a sequence of probability measures in $\mathscr{P}_1(\mathbb{R}^d)$, then convergence in $d_1$ to a limit $\mu \in \mathscr{P}_1(\mathbb{R}^d)$ is equivalent to weak convergence. Furthermore, $d_1$ satisfies the useful {\bf duality formula}:
$$d_1(\mu, \nu) = \sup_{|| \psi ||_{\text{Lip}} \leq 1} \left\{ \int_{\mathbb{R}^d} \psi({\bf x}) d\mu({\bf x)} - \int_{\mathbb{R}^d} \psi({\bf x}) d\nu({\bf x)} \right\}$$
for all $\mu, \nu \in \mathscr{P}_1(\mathbb{R}^d)$, where the supremum is taken over al Lipschitz continuous functions $\psi: \mathbb{R}^d \to \mathbb{R}$ with Lipschitz constant one (see Remark 6.5 in \cite{Villani_2009}).

We now give the required assumption. With some abuse of notation, for joint distribution functions $F_n, F \in \mathbb{R}^d$ we write $d_1(F_n, F)$ to denote the Kantorovich-Rubinstein distance between their probability measures $\mu_n$ and $\mu$. The symbol $\stackrel{P}{\to}$ denotes convergence in probability.

\begin{assum} \label{A.WeakConvergence}
Given the extended bi-degree sequence $({\bf N}_n, {\bf D}_n, {\bf C}_n, {\bf Q}_n)$ define
$$F_n^*(m, q) :=  \frac{1}{n} \sum_{k=1}^n 1(N_k \leq m, Q_k \leq q) \quad \text{and} \quad F_n(m,q,x) := \sum_{k=1}^n 1(N_k \leq m, Q_k \leq q, C_k \leq x) \frac{D_k}{L_n} .$$
Suppose there exist random vectors $(\mathcal{N}_0, \mathcal{Q}_0)$ and $(\mathcal{N}, \mathcal{Q})$, and a random variable $\mathcal{C}$, such that
$$d_1(F_n^*, F^*) \stackrel{P}{\to} 0 \qquad \text{and} \qquad d_1(F_n, F) \stackrel{P}{\to} 0,$$
as $n \to \infty$, where
$$F^*(m,q) := P(\mathcal{N}_0 \leq m, \mathcal{Q}_0 \leq q) \qquad \text{and} \qquad F(m,q,x) :=  P(\mathcal{N} \leq m, \mathcal{Q} \leq q) P(\mathcal{C} \leq x).$$
\end{assum}

\begin{remark} \label{R.WeakConvergence}
Note that Assumption \ref{A.WeakConvergence} and the duality formula imply that
$$\sup \left\{ \mathbb{E}_n\left[ \psi(\hat N_1, \hat Q_1, \hat C_1) \right] - E[\psi( \mathcal{N}, \mathcal{Q}, \mathcal{C})] : \psi \, \text{\em is bounded and continuous} \right\}$$
converges to zero in probability, and therefore, by the bounded convergence theorem,
$$E\left[ \psi(\hat N_1, \hat Q_1, \hat C_1) \right] \to E[\psi( \mathcal{N}, \mathcal{Q}, \mathcal{C})], \quad n \to \infty,$$
for any  bounded and continuous function $\psi$, or equivalently, $(\hat N_1, \hat Q_1, \hat C_1) \Rightarrow (\mathcal{N}, \mathcal{Q}, \mathcal{C})$; similarly, $(\hat N_\emptyset, \hat Q_\emptyset) \Rightarrow (\mathcal{N}_0, \mathcal{Q}_0)$. The duality formula, combined with Assumption \ref{A.Bidegree}, also implies that $E[\mathcal{N}_0] = \nu_1$, $E[\mathcal{N}] = \mu$ and $E[\mathcal{C}] = \nu_5/\nu_1$.
\end{remark}

\subsection{Main Result} \label{S.MainResult}

We are now ready to state the main result of this paper, which establishes the convergence of the rank of a randomly chosen node in the DCM to a non-degenerate random variable $\mathcal{R}^*$.

\begin{theo} \label{T.Main}
Suppose the extended bi-degree sequence $({\bf N}_n, {\bf D}_n, {\bf C}_n, {\bf Q}_n)$ satisfies Assumptions \ref{A.Bidegree} and \ref{A.WeakConvergence}. Then,
$$R_1^{(n,\infty)} \Rightarrow \mathcal{R}^*$$
as $n \to \infty$, where $\mathcal{R}^*$ is defined as in \eqref{eq:LastLimit} with the weights $\{\mathcal{C}_i\}$ i.i.d. and independent of $(\mathcal{N}_0, \mathcal{Q}_0)$, respectively of $(\mathcal{N}, \mathcal{Q})$ in \eqref{eq:SFPE}.
\end{theo}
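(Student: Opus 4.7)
The theorem reduces to showing $\hat R^{(n,k_n)} \Rightarrow \mathcal{R}^*$, after which the estimate \eqref{eq:TreeApprox} delivers $R_1^{(n,\infty)} \Rightarrow \mathcal{R}^*$. My plan is a triangular-array style argument: approximate both $\hat R^{(n,k_n)}$ and $\mathcal{R}^*$ by their $K$-generation truncations, transfer the finite-depth convergence via Assumption~\ref{A.WeakConvergence}, and send $K\to\infty$ at the end.

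I would first truncate the limit. Let $\mathcal{R}^{*,K}$ denote the WBP representation of $\mathcal{R}^*$ with the contribution of generations $\geq K$ dropped. Remark~\ref{R.WeakConvergence} combined with Assumption~\ref{A.Bidegree} gives $E[\mathcal{N}]\, E[|\mathcal{C}|] = \rho < 1$, so a standard contraction estimate yields $E|\mathcal{R}^{*,K} - \mathcal{R}^*| = O(\rho^K) \to 0$. Taking $r_0 = 0$ without loss of generality, the representation \eqref{eq:WBPrepresentation} produces the analogous expression $\hat R^{(n,k_n)} - \hat R^{(n,K)} = \sum_{s=K}^{k_n-1} \sum_{\mathbf{i}\in \hat A_s} \hat\Pi_{\mathbf{i}} \hat Q_{\mathbf{i}}$. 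Using the conditional independence of the TBT node attributes (given $\mathscr{F}_n$), a one-step recursion bounds $\mathbb{E}_n[\sum_{\mathbf{i}\in \hat A_s}|\hat\Pi_{\mathbf{i}}\hat Q_{\mathbf{i}}|]$ by a constant multiple of $\rho_n^{s-1}$, where $\rho_n := \mathbb{E}_n[|\hat C|\,\hat N]$. On $\Omega_n$ we have $|\hat C|\leq c$, and the factorization of the limit in Assumption~\ref{A.WeakConvergence} together with uniform integrability from Assumption~\ref{A.Bidegree} yields $\rho_n \stackrel{P}{\to} \rho$, so $\mathbb{E}_n[|\hat R^{(n,k_n)} - \hat R^{(n,K)}|] = O(\rho^K)$ uniformly in $n$ with probability tending to one.

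The core of the argument is showing $\hat R^{(n,K)} \Rightarrow \mathcal{R}^{*,K}$ for each fixed $K$. Iterating \eqref{eq:TreeIteration} expresses $\hat R^{(n,K)}$ as an explicit continuous function of the finitely many attribute vectors populating the first $K$ generations of the TBT. Applying the duality formula for $d_1$ to $d_1(F_n^*, F^*)\stackrel{P}{\to} 0$ for the root and $d_1(F_n, F)\stackrel{P}{\to} 0$ for all other nodes, I would construct a coupling layer by layer: at each node $\mathbf{i}$ draw $(\hat N_{\mathbf{i}}, \hat Q_{\mathbf{i}}, \hat C_{\mathbf{i}})$ jointly with an i.i.d.\ copy of the limiting vector $(\mathcal{N}, \mathcal{Q}, \mathcal{C})$ through the optimal transport plan, then recurse on the children. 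An auxiliary truncation of $\hat N$ and $|\hat Q|$ at a large threshold makes the map from attribute vectors to the truncated rank globally Lipschitz (using $|\hat C|\leq c$ on $\Omega_n$), and the discarded mass vanishes by uniform integrability. Assembling the three pieces gives $\limsup_{n\to\infty} d_1(\mathcal{L}(\hat R^{(n,k_n)}), \mathcal{L}(\mathcal{R}^*)) \leq O(\rho^K)$ for every $K$, and letting $K\to\infty$ finishes the proof.

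The principal obstacle is the coupling step just described, which must reconcile two structural mismatches. First, in the TBT the weight $\hat C_{\mathbf{i}}$ and the offspring count $\hat N_{\mathbf{i}}$ of the same node are drawn jointly from $f_n$ and are therefore correlated, while the target WBP requires $\mathcal{C}$ to be independent of $(\mathcal{N}, \mathcal{Q})$; the factorization $F(m,q,x) = P(\mathcal{N}\leq m,\mathcal{Q}\leq q)\, P(\mathcal{C}\leq x)$ built into Assumption~\ref{A.WeakConvergence} is exactly what guarantees this correlation vanishes in the limit, so that an independent-weight WBP can serve as the limiting object. Second, the root has its own distribution $F_n^*$ distinct from the offspring distribution $F_n$, producing the asymmetric form of \eqref{eq:LastLimit}; this is handled by treating generation zero separately in the coupling. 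The remaining technicalities---working on $\Omega_n$ so that the conditional bounds and $\rho_n < 1$ hold with high probability, and passing from convergence in probability of $d_1$-distances to weak convergence of the ranks---are routine but bookkeeping-intensive applications of truncation and dominated convergence.
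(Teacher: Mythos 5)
Your high-level plan — reduce via \eqref{eq:TreeApprox}, then prove $\hat R^{(n,k_n)} \Rightarrow \mathcal{R}^*$ — is the right reduction, and the structural observations (the factorization $F(m,q,x) = P(\mathcal{N}\le m, \mathcal{Q}\le q)P(\mathcal{C}\le x)$ is what makes an independent-weight WBP the right limit; the root is handled by its own marginal $F^*$) exactly match the paper's discussion in Section~\ref{S.Main}. However, your route diverges from the paper at the decisive step: the paper does \emph{not} prove $\hat R^{(n,k_n)} \Rightarrow \mathcal{R}^*$ from scratch. It invokes Theorem~4.8 of Chen and Olvera-Cravioto (2014), and the entire substance of the published proof is (a) choosing $k_n$ in the regime where both the finite-iteration error $n^\delta c^{k_n}$ and the coupling error $\varphi(k_n,n)$ go to zero, and (b) verifying the extra moment conditions \eqref{eq:ConvergenceProducts}, $\mathbb{E}_n[\hat N_1|\hat C_1|]\stackrel{P}{\to}E[\mathcal{N}]E[|\mathcal{C}|]$ and $\mathbb{E}_n[|\hat Q_1\hat C_1|]\stackrel{P}{\to}E[|\mathcal{Q}|]E[|\mathcal{C}|]$, which do not follow directly from $d_1$-convergence since those integrands are not Lipschitz. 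You propose instead a self-contained triangular-array argument (truncate both objects at generation $K$, transfer the fixed-$K$ convergence by an optimal-transport coupling layer by layer, send $K\to\infty$), which essentially reconstructs the content the paper delegates to the cited theorem.

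Your sketch is plausible, but two places need more than you give them. First, the "Lipschitz after truncation" claim for the $K$-generation map is subtler than it looks: the map's domain is not a fixed $\mathbb{R}^d$ because the tree topology is determined by the (integer-valued) offspring counts, so a coupling-induced discrepancy $\hat N_{\bf i}\ne \mathcal{N}_{\bf i}$ changes the combinatorial structure, not just coordinate values. You would need to argue that the cumulative probability of any structural mismatch in the first $K$ generations is controlled (which works because $P(\hat N\ne\mathcal{N})\le E|\hat N-\mathcal{N}|$ for integer-valued variables under the $d_1$-optimal coupling, and the expected tree size up to depth $K$ is bounded on $\Omega_n$), and separately control the effect of the weight and $Q$-value mismatches on the branches that do agree. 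Second, your appeal to "uniform integrability from Assumption~\ref{A.Bidegree}" to get $\rho_n\stackrel{P}{\to}\rho$ is precisely the non-routine point the paper isolates: it requires the truncate-then-pass-to-the-limit argument with the duality formula, exactly as the paper carries out for \eqref{eq:ConvergenceProducts}, so it should not be waved through. Neither issue is fatal, but they are where the real work lives; what the paper buys by citing Theorem~4.8 is that it only has to carry out the second of these, while your approach commits you to both.
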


\begin{proof}
Define $\Omega_n$ according to Assumption \ref{A.Bidegree} and note that $P(\Omega_n^c) = O(n^{-\epsilon})$, so it
suffices to show that $R_1^{(n,\infty)}$, conditional on $\Omega_n$, converges weakly to $\mathcal{R}^*$.  Note that by Assumption \ref{A.Bidegree}, $\rho = E[\mathcal{N}] E[ |\mathcal{C}| ] = \nu_5 \mu / \nu_1 < 1$, which is a sufficient condition for $\mathcal{R}$ to be well defined (see Lemma~4.1 in \cite{Jel_Olv_12b}).  First, when $\mu > 1$, fix $0 < \delta < |\log c|/(2\log \mu)$ and let $k_n = s \log n$, where $\delta/|\log c| < s < 1/(2\log \mu)$. Next, note that by the arguments leading to \eqref{eq:TreeApprox},
\begin{align*}
P\left( \left.  \left| R_1^{(n,\infty)} - \hat R^{(n,k_n)}  \right| > n^{-\delta} \right| \Omega_n \right) &= O\left(n^\delta c^{k_n} + (\mu^{2k_n}/n)^{1/2} \right) \\
&= O\left( n^{\delta - s |\log c|} + n^{(2 s \log \mu -1)/2} \right) = o(1)
\end{align*}
as $n \to \infty$. When $\mu \leq 1$ we can take $k_n = n^\varepsilon$, with $\varepsilon < \min\{1/2, \gamma\}$, to obtain that the probability converges to zero. We then obtain that conditionally on $\Omega_n$,
$$\left| R_1^{(n,\infty)} - \hat R^{(n,k_n)}  \right| \Rightarrow 0.$$

That $\hat R^{(n,k_n)} \Rightarrow \mathcal{R}^*$ conditionally on $\Omega_n$ will follow from  Theorem 4.8 in \cite{Chen_Olv_14} and Assumption \ref{A.WeakConvergence} once we verify that, as $n \to \infty$,
\begin{equation} \label{eq:ConvergenceProducts}
\mathbb{E}_n \left[ \hat N_1 |\hat C_1| \right] \stackrel{P}{\to} E[ \mathcal{N}] E[ |\mathcal{C}|] \qquad \text{and} \qquad \mathbb{E}_n \left[ |\hat Q_1 \hat C_1 |\right] \stackrel{P}{\to} E[ |\mathcal{Q}|] E[ |\mathcal{C}|].
\end{equation}

To show that \eqref{eq:ConvergenceProducts} holds define $\phi_K(q,x) = (|q| \wedge K) (|x| \wedge 1)$ for $K > 0$, and note that since $\phi_K$ is bounded and continuous, Assumption \ref{A.WeakConvergence} and Remark \ref{R.WeakConvergence} imply that
$$\mathbb{E}_n\left[ \phi_K(\hat Q_1, \hat C_1) \right] \stackrel{P}{\to} E[ \phi_K(\mathcal{Q}, \mathcal{C}) ] = E[ |\mathcal{Q}| \wedge K] E[|\mathcal{C}|] , \qquad n \to \infty.$$
Next, fix $\epsilon > 0$ and choose $K$ such that $E[|\mathcal{Q}| 1(|\mathcal{Q}| > K)] < \epsilon/4$. Then,
\begin{align*}
 \left| \mathbb{E}_n\left[ |\hat Q_1 \hat C_1| \right] - E[|\mathcal{Q} \mathcal{C}|] \right|  &\leq  \left| \mathbb{E}_n\left[ \phi_K(\hat Q_1, \hat C_1) \right] - E[\phi_K(\mathcal{Q}, \mathcal{C})] \right| \\
 &\hspace{5mm} +  \mathbb{E}_n\left[ (|\hat Q_1| - K)^+  | \hat C_1| \right] +  E[ (|\mathcal{Q}| - K)^+  |\mathcal{C}|] \\
 &\leq \left| \mathbb{E}_n\left[ \phi_K(\hat Q_1, \hat C_1) \right] - E[\phi_K(\mathcal{Q}, \mathcal{C})] \right| +  c \mathbb{E}_n\left[ (|\hat Q_1|-K)^+ \right] + c \epsilon/4,
\end{align*}
where we used that both $|\hat C_1|$ and $|\mathcal{C}|$ are bounded by $c < 1$. It follows that
\begin{align*}
\lim_{n \to \infty} P\left( \left| \mathbb{E}_n\left[ |\hat Q_1 \hat C_1| \right] - E[|\mathcal{Q} \mathcal{C}|] \right| > \epsilon \right) &\leq \lim_{n \to \infty}  P\left(  \mathbb{E}_n\left[ (|\hat Q_1|-K)^+  \right]  > \epsilon/2  \right) .
\end{align*}
To show that this last limit is zero note that $(|x| - K)^+$ is Lipschitz continuous with Lipschitz constant one, so by the duality formula we obtain
$$\mathbb{E}_n\left[ (|\hat Q_1| - K)^+ \right] \stackrel{P}{\to} E[ (|\mathcal{Q}| - K)^+] < \epsilon/4$$
as $n \to \infty$, which gives the desired limit.

The proof for $\mathbb{E}_n \left[ |\hat N_1 \hat C_1 |\right]$ follows the same steps and is therefore omitted.
\end{proof}

\subsection{Asymptotic behavior of the limit}

We end this section by giving a limit theorem describing the tail asymptotics of $\mathcal{R}^*$; its proof is given in Section \ref{S.ProofAsymptotics}. This result covers the case where the weights $\left\{ \mathcal{C}_i\right\}$ are nonnegative and either the limiting in-degree $\mathcal{N}$ or the limiting personalization value $\mathcal{Q}$ have a regularly varying distribution, which in turn implies the regular variation of $\mathcal{R}$. Then, we deduce the asymptotics of $\mathcal{R}^*$ using some results for weighted random sums with heavy-tailed summands. The corresponding theorems can be found in \cite{Olvera_12a,Volk_Litv_10}.

\begin{defn}
We say that a  function $f$ is regularly varying at infinity with index $-\alpha$, denoted $f \in \mathscr{R}_{-\alpha}$, if $f(x) = x^{-\alpha} L(x)$ for some slowly varying function $L$; and $L:[0, \infty) \to (0, \infty)$ is slowly varying if $\lim_{x \to \infty} L(\lambda x)/L(x) = 1$ for any $\lambda > 0$.
\end{defn}

We use the notation $f(x) \sim g(x)$ as $x \to \infty$ for $\lim_{x \to \infty} f(x)/g(x) = 1$.

\begin{theo} \label{T.TailBehavior}
Suppose the generic branching vector $(\mathcal{N}, \mathcal{Q}, \mathcal{C}_1, \mathcal{C}_2, \dots)$ is such that the weights $\{ \mathcal{C}_i \}$ are nonnegative, bounded i.i.d. copies of $\mathcal{C}$, independent of $(\mathcal{N}, \mathcal{Q})$, $\mathcal{N} \in \mathbb{N}$ and $\mathcal{Q} \in \mathbb{R}$. Define $\rho = E[\mathcal{N}] E[\mathcal{C}]$ and $\rho_\alpha = E[\mathcal{N}] E[\mathcal{C}^\alpha]$ and let $\mathcal{R}$ be defined as in \eqref{eq:EndogenousSol}.
\begin{itemize}
\item If $P(\mathcal{N} > x) \in \mathscr{R}_{-\alpha}$, $\alpha > 1$, $\rho \vee \rho_\alpha < 1$, $P(\mathcal{N}_0 > x) \sim \kappa P(\mathcal{N} > x)$ as $x \to \infty$ for some $\kappa > 0$,  $E[ \mathcal{Q} ] , E[ \mathcal{Q}_0 ] > 0$, and $E\left[ |\mathcal{Q}|^{\alpha+\epsilon} + |\mathcal{Q}_0|^{\alpha+\epsilon} \right] < \infty$ for some $\epsilon > 0$, then
$$P(\mathcal{R}^* > x) \sim\left( E[\mathcal{N}_0] E[ \mathcal{C}^\alpha] +  \kappa  (1-\rho_\alpha)   \right)  \frac{(E[\mathcal{Q}]  E[ \mathcal{C}])^\alpha   }{(1-\rho)^\alpha (1-\rho_\alpha)} P(\mathcal{N} > x), \qquad x \to \infty.$$

\item If $P(\mathcal{Q} > x) \in \mathscr{R}_{-\alpha}$, $\alpha > 1$, $\rho \vee \rho_\alpha < 1$, $P(\mathcal{Q}_0 > x) \sim \kappa P(\mathcal{Q} > x)$ as $x \to \infty$ for some $\kappa > 0$,  $E[ |\mathcal{Q}|^\beta +|\mathcal{Q}_0|^\beta ] < \infty$ for all $0 < \beta < \alpha$, and $E\left[ |\mathcal{N}|^{\alpha+\epsilon} + |\mathcal{N}_0|^{\alpha+\epsilon} \right] < \infty$ for some $\epsilon > 0$, then
$$P(\mathcal{R}^* > x) \sim \left( E[\mathcal{N}_0] E[ \mathcal{C}^\alpha ] +  \kappa (1-\rho_\alpha) \right) (1-\rho_\alpha)^{-1} P(\mathcal{Q} > x) , \qquad x \to \infty.$$
\end{itemize}
\end{theo}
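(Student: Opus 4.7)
The plan is to reduce the tail analysis of $\mathcal{R}^*$ to two ingredients that are standard in the WBP/SFPE literature: (i) the tail asymptotics of the endogenous solution $\mathcal{R}$ itself, and (ii) the tail asymptotics of a random sum of i.i.d.\ weighted copies of $\mathcal{R}$ with a (possibly heavy-tailed) random index $\mathcal{N}_0$. Throughout, we exploit the fact that the $\mathcal{C}_i$ are bounded, so Breiman's theorem applies to pass between the tail of $\mathcal{R}$ and the tail of $\mathcal{C}\mathcal{R}$. The representation $\mathcal{R}^* = \sum_{i=1}^{\mathcal{N}_0} \mathcal{C}_i \mathcal{R}_i + \mathcal{Q}_0$, with the $\mathcal{R}_i$ i.i.d.\ copies of $\mathcal{R}$ and independent of $(\mathcal{N}_0, \mathcal{Q}_0, \{\mathcal{C}_i\})$, then lets us assemble the final asymptotic.

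For the first case (heavy-tailed $\mathcal{N}$), I would invoke the tail asymptotic for the endogenous solution established in \cite{Volk_Litv_10} (and extended in \cite{Olvera_12a}), which under the stated moment conditions $\rho \vee \rho_\alpha < 1$ and $E[|\mathcal{Q}|^{\alpha+\epsilon}] < \infty$ yields
\[
P(\mathcal{R} > x) \sim C_{\mathcal{R}} \, P(\mathcal{N} > x), \qquad C_{\mathcal{R}} = \frac{(E[\mathcal{Q}] E[\mathcal{C}])^{\alpha}}{(1-\rho)^{\alpha}(1-\rho_\alpha)}.
\]
By Breiman's theorem $P(\mathcal{C}\mathcal{R} > x) \sim E[\mathcal{C}^\alpha] C_{\mathcal{R}} P(\mathcal{N} > x)$, so the summands $Y_i = \mathcal{C}_i \mathcal{R}_i$ are i.i.d.\ regularly varying of index $-\alpha$ with mean $E[Y] = E[\mathcal{C}] E[\mathcal{R}] = E[\mathcal{C}] E[\mathcal{Q}]/(1-\rho)$. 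Since $\mathcal{N}_0$ is itself regularly varying of the same index $-\alpha$ with $P(\mathcal{N}_0 > x) \sim \kappa P(\mathcal{N} > x)$, the classical asymptotic for random sums of heavy-tailed summands (see Theorem~3.2 in \cite{Olvera_12a}, cf.\ also \cite{Volk_Litv_10}) gives
\[
P\!\left( \sum_{i=1}^{\mathcal{N}_0} Y_i > x \right) \sim E[\mathcal{N}_0]\, P(Y > x) + (E[Y])^{\alpha}\, P(\mathcal{N}_0 > x).
\]
A brief computation using $(E[Y])^{\alpha} = C_{\mathcal{R}}(1-\rho_\alpha)$ collapses the two terms into $\bigl(E[\mathcal{N}_0] E[\mathcal{C}^\alpha] + \kappa(1-\rho_\alpha)\bigr) C_{\mathcal{R}} P(\mathcal{N} > x)$. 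The contribution of $\mathcal{Q}_0$ is absorbed into the error since $E[|\mathcal{Q}_0|^{\alpha+\epsilon}] < \infty$.

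For the second case (heavy-tailed $\mathcal{Q}$), the endogenous representation $\mathcal{R} = \sum_{k=0}^{\infty} \sum_{{\bf i} \in \mathcal{A}_k} \Pi_{\bf i} \mathcal{Q}_{\bf i}$ together with the moment hypotheses on $\mathcal{N}$ and the boundedness of the $\mathcal{C}_i$ allows one to apply Breiman generation-by-generation. Summing the contributions $\rho_\alpha^k P(\mathcal{Q} > x)$ over $k \geq 0$ (rigorously justified by a dominated convergence argument on the tail, which is available under the stated moment conditions on $\mathcal{N}$) yields
\[
P(\mathcal{R} > x) \sim \frac{1}{1-\rho_\alpha}\, P(\mathcal{Q} > x),
\]
and hence $P(Y > x) \sim \frac{E[\mathcal{C}^\alpha]}{1-\rho_\alpha} P(\mathcal{Q} > x)$. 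Now $\mathcal{N}_0$ has moment of order $\alpha+\epsilon$, so the random sum's tail is dominated by the sum of individual tails, $P(\sum_{i=1}^{\mathcal{N}_0} Y_i > x) \sim E[\mathcal{N}_0] P(Y > x)$, while $P(\mathcal{Q}_0 > x) \sim \kappa P(\mathcal{Q} > x)$ contributes additively and independently. Combining gives the claimed formula.

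The main technical obstacle is the first case, where $\mathcal{N}_0$ and the summands $\mathcal{C}_i \mathcal{R}_i$ are regularly varying of the \emph{same} index; here neither the ``many small summands'' term nor the ``single large summand'' term can be dropped, and one must cite the version of the random-sum asymptotic that handles this balance (and that verifies the uniform-integrability-type hypotheses in our setting, using boundedness of $\mathcal{C}$ and the $(\alpha+\epsilon)$-moment of $\mathcal{Q}_0$). A secondary bookkeeping step, harmless but tedious, is the verification that the independence and moment assumptions on $(\mathcal{N}_0, \mathcal{Q}_0, \{\mathcal{C}_i\}, \{\mathcal{R}_i\})$ feeding into $\mathcal{R}^*$ are exactly those required by the cited results in \cite{Olvera_12a, Volk_Litv_10}.
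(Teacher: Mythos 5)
Your proposal is correct and follows essentially the same strategy as the paper: establish the tail of the endogenous solution $\mathcal{R}$ from the known WBP results, then analyze the random sum $\sum_{i=1}^{\mathcal{N}_0}\mathcal{C}_i\mathcal{R}_i+\mathcal{Q}_0$ via a heavy-tailed random-sum asymptotic. The one packaging difference worth flagging: you fold the weights into the summands via Breiman, setting $Y_i=\mathcal{C}_i\mathcal{R}_i$, and then invoke the classical ``$E[\mathcal{N}_0]P(Y>x)+(E[Y])^\alpha P(\mathcal{N}_0>x)$'' asymptotic, whereas the paper keeps the weights explicit and applies Theorems A.1 and A.2 of \cite{Olvera_12a}, which are stated directly for sums of the form $\sum_{i=1}^{N}C_iR_i+Q$ (with the ``large $N$'' contribution expressed through the tail of $\sum_{i=1}^{N}C_i$ via Lemma 3.7(2) of \cite{Jess_Miko_06}). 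Both routes hinge on the same computation $(E[Y])^\alpha=C_{\mathcal{R}}(1-\rho_\alpha)$ to collapse the two terms, and both need the same auxiliary facts you tacitly use (finite $(1+\delta)$-moment of $\mathcal{R}$ via Minkowski and Lemma 2.3 of \cite{Olvera_12a}, and $E[\mathcal{R}]=(1-\rho)^{-1}E[\mathcal{Q}]>0$). Your treatment of the $\mathcal{Q}_0$ term in the second case (``contributes additively and independently'') is a bit loose since $\mathcal{Q}_0$ and $\mathcal{N}_0$ need not be independent, but Theorem A.2 of \cite{Olvera_12a} handles this cleanly, so it is a matter of citation rather than substance. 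Similarly, your generation-by-generation Breiman argument for the second-case tail of $\mathcal{R}$ is a plausible sketch of what Theorem 4.4 of \cite{Olvera_12a} proves rigorously; the paper simply cites that result.
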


\begin{remark}
(i) For PageRank we have $C_i = c/D_i$ and $Q_i = 1-c$, where $c \in (0,1)$ is the damping factor. This leads to a limiting weight distribution of the form
$$P(\mathcal{C} \leq x) = \lim_{n \to \infty} \frac{1}{L_n} \sum_{i=1}^n 1(c/D_i \leq x) D_i,$$
which is not the limiting distribution of the reciprocal of the out-degrees, $\{ c/D_i\}$, but rather a size-biased version of it. 

(ii) Applying Theorem \ref{T.TailBehavior} to PageRank when $P(\mathcal{N} > x) \in \mathscr{R}_{-\alpha}$ and $P(\mathcal{N}_0 > x) \sim \kappa P(\mathcal{N} > x)$ for some constant $\kappa > 0$ gives that
$$P(\mathcal{R}^* > x) \sim \kappa' P(\mathcal{N} > x) \qquad \text{as } x \to \infty,$$
where $\kappa' > 0$ is determined by the theorem.

(iii) The theorem above only includes two possible cases of the relations between $(\mathcal{N}_0, \mathcal{Q}_0)$ and $(\mathcal{N}, \mathcal{Q})$. The exact asymptotics of $\mathcal{R}^*$ can be obtained from those of $\mathcal{R}$ in more cases than these using the same techniques; we leave the details to the reader.

(iv) Theorem \ref{T.TailBehavior} requires the weights $\left\{ \mathcal{C}_i\right\}$ to be nonnegative, which is not a condition in Theorem~\ref{T.Main}. The tail asymptotics of $\mathcal{R}$, and therefore of $\mathcal{R}^*$, in the real-valued case are unknown.
\end{remark}

\section{Algorithm to generate bi-degree sequences} \label{S.Example}

As an example of an extended bi-degree sequence satisfying Assumptions \ref{A.Bidegree} and \ref{A.WeakConvergence}, we give in this section an algorithm based on sequences of i.i.d. random variables.  The method for generating the bi-degree sequence $({\bf N}_n, {\bf D}_n)$ is taken from \cite{Chen_Olv_13}, where the goal was to generate a directed random graph with prescribed in- and out-degree distributions.

To define the algorithm we need to first specify target distributions for the in- and out-degrees, which we will denote by $f^{\text{in}}_k  = P(\mathscr{N} = k)$, and $f^\text{out}_k = P(\mathscr{D} = k)$, $k \geq 0$, respectively. Furthermore, we will assume that these target distributions satisfy $E[\mathscr{N}] = E[\mathscr{D}]$,
$$\overline{F^{\text{in}}}(x) = \sum_{k > x} f_k^{\text{in}} \leq x^{-\alpha} L_{\text{in}}(x) \qquad \text{and} \qquad \overline{F^{\text{out}}} (x) =  \sum_{k > x} f_k^{\text{out}} \leq x^{-\beta} L_{\text{out}}(x), $$
for some slowly varying functions $L_\text{in}$ and $L_\text{out}$, and $\alpha > 1, \beta > 2$.  To the original construction given in \cite{Chen_Olv_13} we will need to add two additional steps to generate the weight and personalization sequences ${\bf C}_n$ and ${\bf Q}_n$, for which we need two more distributions $F^\zeta(x) = P( \zeta \leq x)$ and $F^Q(x) = P( Q \leq x)$ with support on the real line and satisfying
$$P( |\zeta| \leq c ) = 1 \text{ for some $0 < c < 1$}, \quad \text{and} \quad E[|Q|^{1+ \epsilon_Q}] < \infty \text{ for some $0< \epsilon_Q \leq 1$}.$$

Let
$$\kappa_0 = \min\{ 1 - \alpha^{-1}, 1/2\}.$$

{\em The IID Algorithm:}
\begin{enumerate} \itemsep 0pt
\renewcommand{\labelenumi}{\arabic{enumi}.}
\item Fix $0 < \delta_0 < \kappa_0$.
\item Sample an i.i.d. sequence $\{\mathscr{N}_1, \dots, \mathscr{N}_n\}$ from distribution $F^\text{in}$; let $\overline{\mathscr N}_n=\sum_{i=1}^n \mathscr{N}_i$.

\item Sample an i.i.d. sequence $\{ \mathscr{D}_1, \dots, \mathscr{D}_n\}$ from distribution $F^\text{out}$, independent of $\{\mathscr{N}_i\}$; let $\overline{\mathscr D}_n=\sum_{i=1}^n \mathscr{D}_i$.

\item Define $\Delta_n= \overline{\mathscr N}_n- \overline{\mathscr D}_n$. If $|\Delta_n| \leq n^{1-\kappa_0 + \delta_0}$ proceed to step 5; otherwise repeat from step 2.

\item Choose randomly $|\Delta_n|$ nodes $\{i_1, i_2, \dots, i_{|\Delta_n|}\}$ without replacement and let
    \begin{align*}
        N_i &= \begin{cases}
        \mathscr{N}_i + 1 & \text{if $\Delta_n < 0$ and $i\in \{ i_1,i_2,\dots,i_{|\Delta_n|} \} $,}\\
        \mathscr{N}_i & \text{otherwise,}
    \end{cases}\\
    D_i &=\begin{cases}
        \mathscr{D}_i + 1 & \text{if $\Delta_n \ge 0$ and $i\in \{ i_1,i_2,\dots,i_{|\Delta_n|} \} $,}\\
        \mathscr{D}_i & \text{otherwise.}
    \end{cases}
  \end{align*}
  \item Sample an i.i.d. sequence $\{ Q_1, \dots, Q_n\}$ from distribution $F^Q$, independent of $\{\mathscr{N}_i\}$ and $\{\mathscr{D}_i\}$.
  \item Sample an i.i.d. sequence $\left\{ \zeta_1,\dots,\zeta_n \right\}$ from distribution $F^{\zeta}$, independent of $\{\mathscr{N}_i\}$, $\{\mathscr{D}_i\}$ and $\{Q_i\}$, and set $C_i=\zeta_i/D_i$ if $D_i \geq 1$ or $C_i = c \sgn(\zeta_i)$ otherwise.
\end{enumerate}

\begin{remark}
Note that since $E[|\mathscr{N} - \mathscr{D}|^{1+ a}] < \infty$ for any $0 < a < \min\{\alpha-1, \beta-1\}$, then $E[|\mathscr{N}-\mathscr{D}|^{1+ (\kappa_0-\delta_0)/(1-\kappa_0)}] < \infty$, and Corollary \ref{C.WeakLaw} in Section \ref{S.Proofs} gives
\begin{equation} \label{eq:Difference}
P\left( |\Delta_n| > n^{1-\kappa_0 + \delta_0} \right) = O\left(  n^{-\delta_0 (\kappa_0-\delta_0)/(1-\kappa_0) } \right)
\end{equation}
as $n \to \infty$.
\end{remark}

\bigskip

The two propositions below give the desired properties. Their proofs are given in Section \ref{S.ProofsIIDexample}.

\begin{prop} \label{P.VerifyAssumption1}
The extended bi-degree sequence $({\bf N}_n, {\bf D}_n, {\bf C}_n, {\bf Q}_n)$ generated by the IID Algorithm satisfies Assumption \ref{A.Bidegree} for any $0 < \kappa < \beta -2$, any $0 < \gamma < \min\{ (\kappa_0-\delta_0)^2/(1-\delta_0), \, (\beta-2-\kappa)/\beta\}$, $\mu = \nu_1 = E[\mathscr{N}] = E[\mathscr{D}]$, $\nu_2 = (E[\mathscr{D}])^2$, $\nu_3 = E[\mathscr{D}^2]$, $\nu_4 = E[ \mathscr{D}^{2+\kappa}]$, $\nu_5 = E[|\zeta|] P(\mathscr{D} \geq 1)$, $H = E[ |Q|] + 1$, and some $\varepsilon > 0$.
\end{prop}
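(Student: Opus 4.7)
The plan is to verify each of the six events $\Omega_{n,i}$ in Assumption \ref{A.Bidegree} separately, showing each holds with probability $1-O(n^{-\epsilon})$ for some common $\epsilon>0$, and then conclude by a union bound. The argument has two ingredients: (i) a rate-of-convergence weak law of large numbers (Corollary \ref{C.WeakLaw}, the same tool used to establish \eqref{eq:Difference}) applied to a handful of i.i.d.\ sums, and (ii) an estimate for the error introduced by the modification step (step 5 of the algorithm), which alters at most $|\Delta_n|$ of the $N_i$'s or $D_i$'s by exactly one.

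For (i), I would identify the relevant i.i.d.\ sums and their limits. Since $\{\mathscr{N}_i\}\perp\{\mathscr{D}_i\}$ with $E[\mathscr{N}]=E[\mathscr{D}]$, and since $\{Q_i\}$ and $\{\zeta_i\}$ are i.i.d.\ and independent of the degree sequences with $|\zeta|\le c$ a.s.\ and $E[|Q|^{1+\epsilon_Q}]<\infty$, the candidates are
\[
\sum_{i}\mathscr{D}_i,\ \sum_{i}\mathscr{D}_i\mathscr{N}_i,\ \sum_{i}\mathscr{D}_i^2,\ \sum_{i}\mathscr{D}_i^{2+\kappa},\ \sum_{i}|\zeta_i|\mathbf{1}(\mathscr{D}_i\ge 1),\ \sum_i |Q_i|,
\]
with respective means $n\nu_1,n\nu_2,n\nu_3,n\nu_4,n\nu_5$ (matching the definitions in the statement) and $nE[|Q|]$. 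Applying Corollary \ref{C.WeakLaw} with an exponent $1+a$ just below the largest admissible one gives $P(|S_n-nE[X]|>n^{1-\gamma})=O(n^{\gamma-a(1-\gamma)})$. The binding constraint is the $(2+\kappa)$-th power sum, which demands $(2+\kappa)(1+a)<\beta$; optimizing yields the bound $\gamma<(\beta-2-\kappa)/\beta$ in the statement. For the other sums (linear, bilinear, second power, bounded-indicator, $|Q|$) the constraint is strictly weaker.

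For (ii), on the event $\{|\Delta_n|\le n^{1-\kappa_0+\delta_0}\}$ (whose complement has probability $O(n^{-\delta_0(\kappa_0-\delta_0)/(1-\kappa_0)})$ by \eqref{eq:Difference}), elementary expansions give
\[
\Big|\sum_r D_r^p-\sum_r\mathscr{D}_r^p\Big|\le p\sum_{i\in M}(\mathscr{D}_i+1)^{p-1},\quad \Big|\sum_r D_rN_r-\sum_r\mathscr{D}_r\mathscr{N}_r\Big|\le\sum_{i\in M}(\mathscr{D}_i+\mathscr{N}_i+1),
\]
and $|\sum_r|C_r|D_r-\sum_r|\zeta_r|\mathbf{1}(\mathscr{D}_r\ge 1)|\le c|\Delta_n|$, using the key identity $|C_r|D_r=|\zeta_r|\mathbf{1}(D_r\ge 1)$ (valid both when $D_r\ge1$ and $D_r=0$). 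Since $M$ is a uniformly chosen subset of size $|\Delta_n|$, taking conditional expectation given $(\{\mathscr{D}_i,\mathscr{N}_i\},|\Delta_n|)$ shows each correction has conditional mean $(|\Delta_n|/n)\sum_i(\mathscr{D}_i+1)^{p-1}=O(|\Delta_n|)$, invoking the finiteness of $E[(\mathscr{D}+1)^{p-1}]$ for $p-1\le 1+\kappa<\beta-1$; Markov then provides a conditional tail bound of order $|\Delta_n|/n^{1-\gamma}=O(n^{-(\kappa_0-\delta_0-\gamma)})$, requiring $\gamma<\kappa_0-\delta_0$. The uniform bound $\max_r|C_r|D_r\le c$ in $\Omega_{n,5}$ follows at once from $|\zeta_r|\le c$ a.s., and $\Omega_{n,6}$ is the standard weak law for $\sum|Q_i|$ with $H:=E[|Q|]+1$.

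Combining (i) and (ii) yields $P(\Omega_{n,i}^c)=O(n^{-\epsilon_i})$ for each $i$. The smaller of the two upper bounds on $\gamma$ in the statement, namely $(\kappa_0-\delta_0)^2/(1-\delta_0)$, comes from the balancing condition $a>\gamma/(1-\gamma)$ applied with the particular moment index $a=(\kappa_0-\delta_0)/(1-\kappa_0)$ available for $|\Delta_n|$ (which played the same role in deriving \eqref{eq:Difference}), arranged so that the product of the $|\Delta_n|$-deviation probability and the conditional Markov bound on the modification error remains $O(n^{-\epsilon})$; the other upper bound $(\beta-2-\kappa)/\beta$ comes from the $(2+\kappa)$-th moment sum. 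A final union bound across the six events delivers $P(\Omega_n^c)=O(n^{-\epsilon})$ with $\epsilon=\min_i\epsilon_i>0$. The main obstacle is the careful bookkeeping needed to match a common $\epsilon>0$ across all six sums while simultaneously respecting both $\gamma$-constraints; the $\Omega_{n,4}$ event with the $(2+\kappa)$-th power is the analytical bottleneck because it uses the full strength of $\beta>2+\kappa$.
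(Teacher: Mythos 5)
Your overall structure matches the paper's: break $\Omega_n$ into the six sub-events, treat the i.i.d.\ sums $\sum\mathscr{D}_i$, $\sum\mathscr{D}_i\mathscr{N}_i$, $\sum\mathscr{D}_i^2$, $\sum\mathscr{D}_i^{2+\kappa}$, $\sum|\zeta_i|\mathbf{1}(\mathscr{D}_i\ge 1)$, $\sum|Q_i|$ via Corollary~\ref{C.WeakLaw}, bound the error introduced by the $|\Delta_n|$ modifications separately, and union over $i$. The inequality $(d+x)^{\vartheta}-d^{\vartheta}\le\vartheta(d+1)^{\vartheta-1}x$, the reduction of the $\sum D_rN_r$ correction to $\chi_r\tau_j\equiv 0$, the identity $|C_r|D_r=|\zeta_r|\mathbf{1}(D_r\ge 1)$, and the derivation of $\gamma<(\beta-2-\kappa)/\beta$ from the $(2+\kappa)$-th moment sum all agree with the paper.

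The genuine divergence is in how you control the modification error. The paper bounds $\sum_{i\in M}\mathscr{D}_i$ (and analogues) \emph{deterministically} by the sum of the $|\Delta_n|$ \emph{largest} order statistics and then invokes Lemma~\ref{lem:order-stat-converge}, which produces the exponent $-\eta(\kappa_0-\delta_0)/(1+\eta)+\gamma$ and hence the constraint $\gamma<(\kappa_0-\delta_0)^2/(1-\delta_0)$. You instead exploit the uniform randomness of $M$: condition on the data and $|\Delta_n|$, compute the conditional mean $(|\Delta_n|/n)\sum_i(\mathscr{D}_i+1)^{p-1}$, and apply Markov. This is a genuinely different (and in fact slightly sharper) argument, giving the weaker requirement $\gamma<\kappa_0-\delta_0$; since the stated range of $\gamma$ is contained in that, your proof still covers the proposition. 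What the order-statistics route buys is a bound that holds pathwise on $E_n$ with no further averaging over the random subset, at the price of the more restrictive $\gamma$-range you see in the statement; your conditional-Markov route is cheaper but requires one extra step to pass from the conditional bound to the unconditional one (the conditional mean $(|\Delta_n|/n)\sum_i(\mathscr{D}_i+1)^{p-1}$ is random, and you need the deterministic cap $|\Delta_n|\le n^{1-\kappa_0+\delta_0}$ on $E_n$ together with $E[(\mathscr{D}+1)^{p-1}]<\infty$ to close it, which you should spell out).

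One inaccuracy worth flagging: your claim that the balancing condition $a>\gamma/(1-\gamma)$ with $a=\eta=(\kappa_0-\delta_0)/(1-\kappa_0)$ produces the exponent $(\kappa_0-\delta_0)^2/(1-\delta_0)$ is not right; that algebra yields $\gamma<\eta/(1+\eta)=(\kappa_0-\delta_0)/(1-\delta_0)$, with one fewer factor of $(\kappa_0-\delta_0)$. The squared factor in the paper's constraint comes specifically from applying Lemma~\ref{lem:order-stat-converge} with $k_n\asymp n^{1-\kappa_0+\delta_0}$ and moment index $\eta$, i.e., from the $k_n^{\eta/(1+\eta)}$ prefactor — not from a WLLN balancing condition. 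Your own Markov-based derivation simply does not produce that exponent; it produces the weaker $\gamma<\kappa_0-\delta_0$. This does not invalidate the proposal (the proposition's $\gamma$-range is strictly smaller), but it means your explanation of \emph{why} the statement quotes $(\kappa_0-\delta_0)^2/(1-\delta_0)$ is wrong: that constant is an artifact of the paper's choice of tool, not of yours.
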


\begin{prop} \label{P.VerifyAssumption2}
The extended bi-degree sequence $({\bf N}_n, {\bf D}_n, {\bf C}_n, {\bf Q}_n) $ generated by the IID Algorithm satisfies Assumption \ref{A.WeakConvergence} with
$$F^*(m,q) = P(\mathscr{N} \leq m) P(Q \leq q) \qquad \text{and}$$
$$F(m,q,x) = P(\mathscr{N} \leq m) P(Q \leq q) E[1( \zeta/\mathscr{D} \leq x) \mathscr{D}]/ \mu.$$
\end{prop}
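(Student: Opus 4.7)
The plan is to exploit the i.i.d.\ structure of the IID Algorithm and the characterization that on $\mathscr{P}_1(\mathbb{R}^d)$, convergence in $d_1$ is equivalent to weak convergence plus convergence of the first absolute moments (Villani, Theorem 6.9). First I would replace the modified bi-degree sequence $({\bf N}_n,{\bf D}_n)$ by the original i.i.d.\ sequences $(\{\mathscr{N}_k\},\{\mathscr{D}_k\})$, showing the substitution error is $o_P(1)$ in $d_1$ via the bound $|\Delta_n|/n = o_P(1)$ from \eqref{eq:Difference}; then I would apply Glivenko--Cantelli together with the law of large numbers to the resulting i.i.d.\ empirical distributions.

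For $F_n^*$, let $\tilde F_n^*(m,q) := n^{-1}\sum_{k=1}^n 1(\mathscr{N}_k \leq m, Q_k \leq q)$. Since $N_k \neq \mathscr{N}_k$ on at most $|\Delta_n|$ indices and every $1$-Lipschitz $\psi$ changes by at most $1$ when one coordinate shifts by one unit, the duality formula yields $d_1(F_n^*, \tilde F_n^*) \leq |\Delta_n|/n = o_P(1)$. Since $\{(\mathscr{N}_k,Q_k)\}$ is i.i.d.\ with $\mathscr{N}$ independent of $Q$, the multivariate Glivenko--Cantelli theorem gives $\tilde F_n^* \Rightarrow F^*$ almost surely, and the LLN gives $n^{-1}\sum_k(\mathscr{N}_k + |Q_k|) \to E[\mathscr{N}] + E[|Q|]$; both moments are finite by $\alpha > 1$ and $E[|Q|^{1+\epsilon_Q}] < \infty$, hence $d_1(\tilde F_n^*, F^*) \to 0$ a.s.

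For the size-biased distribution $F_n$, define $\tilde F_n(m,q,x) := \overline{\mathscr D}_n^{-1} \sum_{k=1}^n 1(\mathscr{N}_k \leq m, Q_k \leq q, \zeta_k/\mathscr{D}_k \leq x)\, \mathscr{D}_k$, where terms with $\mathscr{D}_k = 0$ are absent. Using $|L_n - \overline{\mathscr D}_n| \leq |\Delta_n|$ together with the fact that at most $|\Delta_n|$ of the weighted atoms differ and that per-index contributions are uniformly bounded in mean (since $|\zeta|\leq c$), the duality formula again gives $d_1(F_n, \tilde F_n) = o_P(1)$. For $d_1(\tilde F_n, F)$, applying the LLN to $n^{-1}\sum_k \psi(\mathscr{N}_k, Q_k, \zeta_k/\mathscr{D}_k)\, \mathscr{D}_k$ for bounded continuous $\psi$, together with $\overline{\mathscr D}_n/n \to \mu$ and the independence of $(\mathscr{N}_k, Q_k)$ from $(\mathscr{D}_k, \zeta_k)$, yields weak convergence to $F(m,q,x) = P(\mathscr{N} \leq m)\, P(Q \leq q)\, E[1(\zeta/\mathscr{D} \leq x)\mathscr{D}]/\mu$. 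First-moment convergence reduces, via the identities $\sum_k |C_k|D_k = \sum_{k: D_k\geq 1}|\zeta_k|$ and analogues for $|m|$ and $|q|$, to LLN statements for $|\zeta_k|1(\mathscr{D}_k\geq 1)$, $\mathscr{N}_k\mathscr{D}_k$, and $|Q_k|\mathscr{D}_k$, each of which has finite mean by $|\zeta|\leq c$, the independence of the underlying sequences, and $E[\mathscr{D}]<\infty$ (from $\beta>2$).

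The main obstacle will be controlling the substitution $d_1(F_n, \tilde F_n)$: the random denominator $L_n$ appears multiplicatively, Step 5's $\pm 1$ perturbation of $D_k$ changes $C_k = \zeta_k/D_k$ nontrivially, and Step 7's auxiliary convention $C_k = c\,\sgn(\zeta_k)$ at $\mathscr{D}_k = 0$ needs separate handling. All three are resolved by the facts that the perturbed indices form a uniformly random subset of size $|\Delta_n| = o_P(n)$, that boundary indices with $D_k = 0$ carry weight $D_k/L_n = 0$ in $F_n$, and that per-index contributions have uniformly bounded mean; the remainder is a routine combination of Glivenko--Cantelli, the LLN, and the duality formula.
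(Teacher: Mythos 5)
Your proposal follows essentially the same decomposition as the paper's proof: substitute the modified sequences by the original i.i.d.\ ones, bound the substitution error using $|\Delta_n|/n=o_P(1)$, apply the SLLN for the i.i.d.\ part, and identify the limit for the size-biased coordinate by a change of variable. You also make explicit the characterization of $d_1$-convergence as weak convergence plus first-moment convergence, which the paper leaves implicit behind the duality formula.

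Two cautionary remarks on the $F_n$ substitution step, which is where all the real work is. First, the parenthetical justification ``per-index contributions are uniformly bounded in mean (since $|\zeta|\leq c$)'' is incomplete: a $1$-Lipschitz $\psi$ on $\mathbb{R}^3$ has linear growth in all three coordinates, so the contribution of a perturbed index $k$ is on the order of $(\mathscr{N}_k+|Q_k|+c)\mathscr{D}_k/S_n$, and bounding its mean needs $E[\mathscr{N}\mathscr{D}], E[|Q|\mathscr{D}]<\infty$, which hold by independence of $\mathscr{N},Q$ from $\mathscr{D}$ and the finite first moments; $|\zeta|\leq c$ only controls the $\mathscr{C}$-coordinate. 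Second, the paper actually bounds the perturbed indices' contribution deterministically via Cauchy--Schwarz (using $E[\mathscr{D}^2]<\infty$, which requires $\beta>2$) and H\"older (using $1+\delta$ moments of $\mathscr{N},Q$), giving a.s.\ convergence; your implicit appeal to the uniformly random choice of the perturbed subset, combined with Markov's inequality, would deliver only the in-probability conclusion --- which is still enough for Assumption~\ref{A.WeakConvergence}, but you should state that you are using that conditional independence explicitly, since otherwise the claim that ``at most $|\Delta_n|$ weighted atoms differ'' does not by itself bound their total weight (a few perturbed indices could carry most of the mass of $\{\mathscr{D}_k\}$ if the subset were adversarial).

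Finally, the boundary case $\mathscr{D}_k=0 \to D_k=1$ needs slightly more than your remark that $D_k=0$ indices carry zero weight: after perturbation these indices carry weight $1/L_n$ in $F_n$ but are absent from your $\tilde F_n$, so they contribute to the substitution error through the term $|\zeta_k - c\,\sgn(\zeta_k)|\cdot 1/L_n$ plus a coordinate shift in $\psi$; the paper handles this separately, and your plan should too.
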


\subsection{Numerical examples}

To complement the theoretical contribution of the paper, we use the IID Algorithm described in the previous section to provide some numerical results showing the accuracy of the WBP approximation to PageRank. To generate the in- and out-degrees we use the zeta distribution. More precisely, we set
\[
    \mathscr{N}_i = X_{1,i}+Y_{1,i},\quad \mathscr{D}_i = X_{2,i}+Y_{2,i},
\]
where $\{X_{1,i}\}$ and $\left\{ X_{2,i} \right\}$ are independent sequences of i.i.d. Zeta random variables with parameters $\alpha+1$ and $\beta+1$, respectively; $\{Y_{1,i}\}$ and $\left\{ Y_{2,i} \right\}$ are independent sequences of i.i.d.
Poisson random variables with different parameters chosen so that $\mathscr{N}$ and $\mathscr D$ have equal mean.
Note that the Poisson distribution has a light tail so that the power law tail behavior of $\mathscr{N}$ and $\mathscr{D}$ is preserved and determined by $\alpha$ and $\beta$, respectively.

Once the sequences $\left\{ \mathscr N_i \right\}$ and $\left\{ \mathscr D_i \right\}$ are generated, we use the IID Algorithm to obtain a valid bi-degree sequence $({\bf N}_n,{\bf D}_n)$.
Note that in PageRank, we have $\zeta_i=c$ and $Q_i=1-c$.
Given this bi-degree sequence we next proceed to construct the graph and the
TBT simultaneously, according to the rules described in Section \ref{S.CouplingWithTree}.
To compute $\mathbf R^{(n,\infty)}$ we perform matrix iterations with $r_0=1$ until
$\|\mathbf R^{(n,k)}-\mathbf R^{(n,k-1)}\|_2 <\varepsilon_0$
for some tolerance $\varepsilon_0$. We only generate the TBT for as many generations as it takes to construct the graph, with each generation corresponding to a step in the breadth first graph exploration process. The computation of the root node of the TBT, $\hat R^{(n,k)}$ is done recursively starting from the leaves using
\[
    \hat R_{\bf i}^{(n,0)}=1 \text{ for } {\bf i} \in \hat A_k, \qquad \hat R_{\bf i}^{(n,r)}=\sum_{j=1}^{\hat N_{\bf i}} \frac{c}{\hat D_{({\bf i},j)} } \hat R_{({\bf i},j)}^{(n,r-1)} + 1-c,  \text{ for } {\bf i} \in \hat A_r, \, 0 \leq r < k.
\]

To draw a sample from $\mathcal R^*$, note that by Proposition \ref{P.VerifyAssumption2}, $\mathcal{R}^*$ in the IID Algorithm has the same distribution as $\mathcal R$, i.e., the endogenous solution to the SFPE
$$\mathcal R\stackrel{\mathcal{D}}{=}\sum_{i=1}^{\mathscr N}\mathcal C_i \mathcal R_i+1-c,$$
where $P(\mathcal{C} \leq x) = E[1(c/\mathscr{D} \leq x) \mathscr{D}]/\mu$. To sample $\mathcal{R}$ we construct a WBP with generic branching vector $(\mathscr{N}, 1-c, \{ \mathcal{C}_i\})$, with the $\{\mathcal{C}_i\}$ i.i.d. and independent of $\mathscr{N}$ and proceed as in the computation of $\hat R^{(n,k)}$. To simulate samples of $\mathcal{C}$ we use the  acceptance-rejection method.

To show the convergence of $R_1^{(n,\infty)}$ to $\mathcal R^*$, we let $n=10$, 100 and 10000.
The values of the other parameters are $\alpha=1.5$, $\beta=2.5$, $\mathbb E[\mathscr N]=\mathbb E[\mathscr D]=2$, $c=0.3$. For the TBT, we simulate up to $k_n=\lfloor \log n\rfloor$ generations. For the WBP, we simulate 10 generations. For each $n$, we draw 1000 samples of $R_1^{(n,\infty)}$, $R_1^{(n,k_n)}$, $\hat R^{(n,k_n)}$ and $\mathcal R^*$, respectively, to approximate the distribution of these quantities.

Figure~\ref{fig:n=10000} shows the empirical CDFs of 1000 i.i.d. samples of the true PageRank, $R_1^{(n,\infty)}$; finitely many iterations of PageRank, $R_1^{(n,k_n)}$; and the TBT approximation $\hat R^{(n,k_n)}$; it also plots the distribution of  the limit $\mathcal R^*$ using $1000$ simulations. The approximations are so accurate that the CDFs are almost indistinguishable. Figure~\ref{fig:differentn} illustrates the weak convergence of PageRank on the graph, $R_1^{(n,\infty)}$, to its limit $\mathcal R^*$ as the size of the graph grows.

To quantify the distance between the CDFs, we sort the samples in ascending order and compute the mean squared error (MSE)
$\sum_{i=1}^{1000}(x_i^{(n)}-y_i)/1000$, where $y_i$ is the sorted $i$th sample of $\mathcal R^*$ and $x_i^{(n)}$ is the sorted $i$th sample of $R_1^{(n,\infty)}$. For robustness, we discard the squared error of the maximal value. As a result, the MSEs are 0.2950, 0.1813 and 0.0406 respectively for $n=10$, 100 and 10000. It is clear that the approximation improves as $n$ increases.

\begin{figure}[ht]
\centering
\includegraphics[scale=0.9, bb = 20 20 350 300, clip]{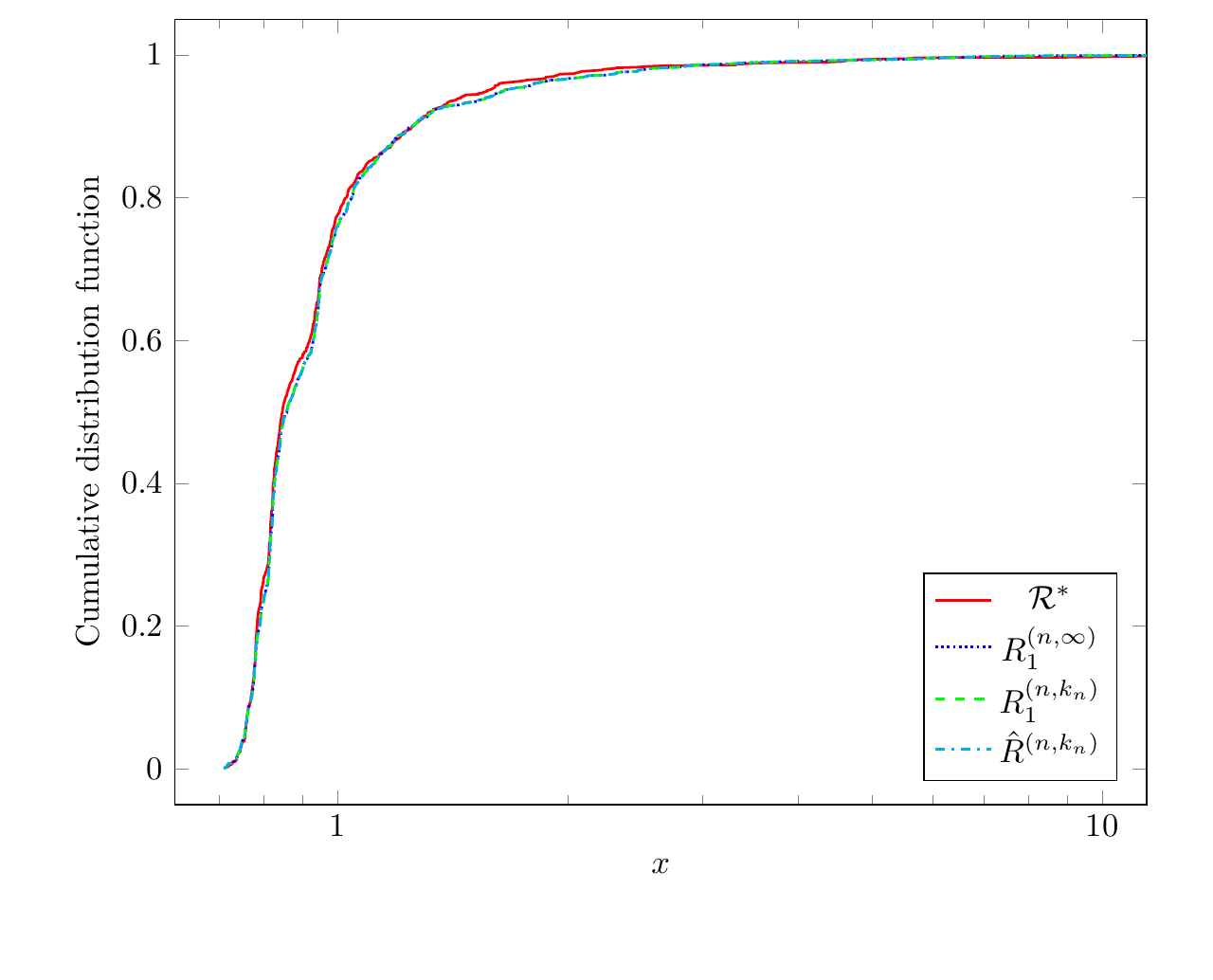}
\caption{The empirical CDFs of 1000 samples of $\mathcal R^*$, $R_1^{(n,\infty)}$, $R_1^{(n,k_n)}$ and $\hat R^{(n,k_n)}$ for $n=10000$ and $k_n=9$.} \label{fig:n=10000}
\end{figure}

\begin{figure}[ht]
\centering
\includegraphics[scale=0.9, bb = 20 20 350 300, clip]{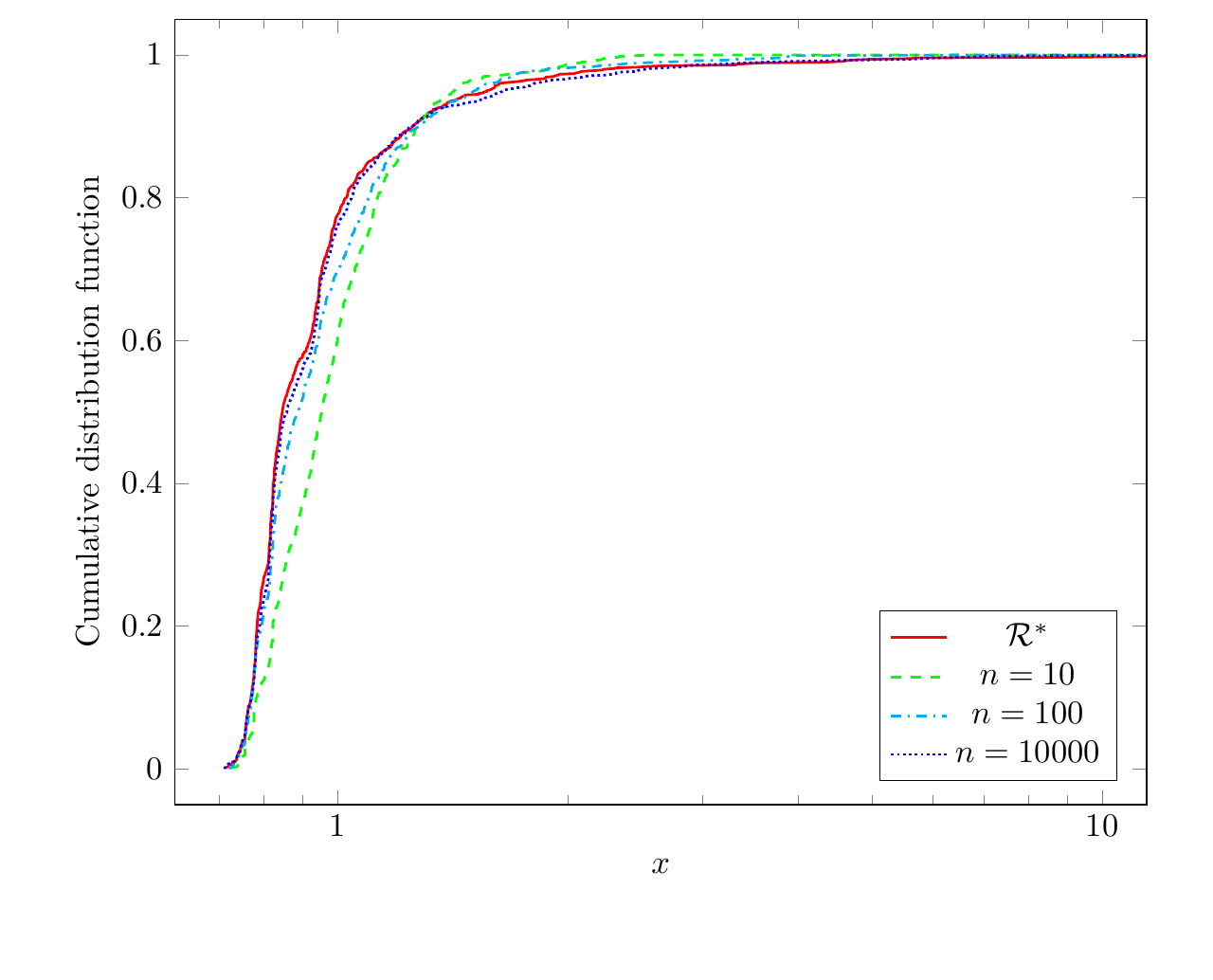}
\caption{The empirical CDFs of 1000 samples of $\mathcal R^*$ and $R_1^{(n,\infty)}$ for $n=10$, 100 and 10000.}
\label{fig:differentn}
\end{figure}

\section{Proofs} \label{S.Proofs}

The last section of the paper contains most of the proofs. For the reader's convenience we have
organized them in subsections according to the order in which their corresponding statements appear in the paper.

\subsection{Proof of the coupling lemma} \label{S.ProofCoupling}

Recall from Section \ref{S.CouplingWithTree} that $\hat N_\emptyset$ denotes the number of  offspring of the root node in the TBT (chosen from distribution \eqref{eq:FirstNodeDistr}) and $\hat N_1$ denotes the number of offspring of a node chosen from distribution \eqref{eq:randomJointDistr}. Throughout this section we will also need to define
\begin{align*}
\mu_n^* &= \mathbb{E}_n \left[ \hat N_\emptyset  \right] =  \sum_{i,j,s,t} i f_n^*(i,j,s,t) = \frac{1}{n} \sum_{k=1}^n N_k = \frac{L_n}{n},
\end{align*}
\vspace{-5pt}
and
\vspace{-5pt}
\begin{align*}
\mu_n &= \mathbb{E}_n\left[  \hat N_1  \right] = \sum_{i,j,s,t} i f_n(i,j,s,t) = \frac{1}{L_n} \sum_{k=1}^n N_k D_k.
\end{align*}

Before we give the proof of the Coupling Lemma \ref{L.CouplingBreaks} we will need the following estimates for the growth of the process $\{\hat Z_k\}$.

\begin{lemma} \label{L.Doob}
Suppose $({\bf N}_n, {\bf D}_n, {\bf C}_n, {\bf Q}_n)$ satisfies Assumption \ref{A.Bidegree} and recall that $\mu = \nu_2/\nu_1$. Then, for any constants $K > 0$, any nonnegative sequence $\{x_n\}$ with $x_n \to \infty$ and any $k = O( n^{\gamma})$,
$$P\left( \left. \max_{0 \leq r \leq k} \frac{\hat Z_r}{\mu^r} > K x_n  \right| \Omega_n \right) = O\left( x_n^{-1}  \right), \qquad n \to \infty.$$
\end{lemma}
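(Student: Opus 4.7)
My plan is to recognise that, conditionally on $\mathscr{F}_n$, the TBT reduces from generation one onwards to a classical Galton--Watson process with offspring distribution $f_n$ (defined in \eqref{eq:randomJointDistr}) and mean $\mu_n = \mathbb{E}_n[\hat N_1]$; the root is special, with $\hat Z_0 = \hat N_\emptyset$ drawn from $f_n^*$ and having mean $\mu_n^* = L_n/n$. Letting $\mathcal{G}_r := \sigma\bigl(\mathscr{F}_n,\{(\hat N_{\mathbf{i}},\hat D_{\mathbf{i}},\hat C_{\mathbf{i}},\hat Q_{\mathbf{i}}):\mathbf{i}\in \hat A_0\cup\cdots\cup\hat A_r\}\bigr)$, the identity $\hat Z_r = \sum_{\mathbf{i}\in\hat A_r}\hat N_{\mathbf{i}}$ shows that $\hat Z_r$ is $\mathcal{G}_r$-measurable and that $\mathbb{E}_n[\hat Z_{r+1}\mid \mathcal{G}_r] = \mu_n\hat Z_r$, so $M_r := \hat Z_r/\mu_n^r$ is a nonnegative $(\mathcal{G}_r)$-martingale under $\mathbb{P}_n$ with constant mean $\mathbb{E}_n[M_r] = \mu_n^*$.

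Next I would apply Doob's maximal inequality to $M_r$,
$$\mathbb{P}_n\Bigl(\max_{0\leq r\leq k} M_r > \lambda\Bigr)\leq \frac{\mu_n^*}{\lambda},$$
and then convert $M_r$ into $\hat Z_r/\mu^r$ at the cost of a bounded multiplicative factor. Indeed, on $\Omega_n$ the bounds $\Omega_{n,1}$ and $\Omega_{n,2}$ give $L_n = n\nu_1 + O(n^{1-\gamma})$ and $\sum_r N_rD_r = n\nu_2 + O(n^{1-\gamma})$, so a short computation yields $\mu_n^* = \nu_1 + O(n^{-\gamma}) = O(1)$ and $\mu_n/\mu = 1 + O(n^{-\gamma})$. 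For $k = O(n^\gamma)$ this implies
$$(\mu_n/\mu)^k \leq \exp\bigl(O(k\,n^{-\gamma})\bigr) = O(1),$$
so a deterministic constant $\Lambda>0$ (independent of $n$ and $k$) can be chosen such that $\max_{0\leq r\leq k}\hat Z_r/\mu^r \leq \Lambda\max_{0\leq r\leq k} M_r$ uniformly on $\Omega_n$.

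Combining the Doob bound with $\lambda = Kx_n/\Lambda$ and the conversion factor yields, pathwise on $\Omega_n$,
$$\mathbb{P}_n\Bigl(\max_{0\leq r\leq k}\hat Z_r/\mu^r > Kx_n\Bigr) \leq \mathbb{P}_n\Bigl(\max_{0\leq r\leq k} M_r > Kx_n/\Lambda\Bigr) \leq \frac{\Lambda\mu_n^*}{Kx_n},$$
and integrating this bound against the conditional law of $\mathscr{F}_n$ given $\Omega_n$, using $\mu_n^* = O(1)$ on $\Omega_n$, delivers the claimed $O(x_n^{-1})$ estimate. The step I expect to be the main obstacle is precisely the $\mu_n\to\mu$ conversion: one must check that the cumulative multiplicative drift $(\mu_n/\mu)^k$ remains bounded in $n$, and this depends crucially on the interplay between the $n^{1-\gamma}$ fluctuations guaranteed by Assumption~\ref{A.Bidegree} and the growth restriction $k=O(n^\gamma)$. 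Everything else---the martingale property, Doob's inequality, and conditioning on $\Omega_n$---is routine once that error control is secured.
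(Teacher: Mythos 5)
Your proof is correct and takes essentially the same route as the paper's: you identify the conditional $\mathbb{P}_n$-martingale obtained by dividing $\hat Z_r$ by powers of the (random but $\mathscr{F}_n$-measurable) mean $\mu_n$, apply Doob's maximal inequality conditionally on $\mathscr{F}_n$, and then absorb the cumulative drift $(\mu_n/\mu)^k=(1+O(n^{-\gamma}))^k=O(1)$ on $\Omega_n$ using $k=O(n^\gamma)$ before integrating against the conditional law given $\Omega_n$. The only cosmetic difference is that you normalize by $\mu_n^r$ (so your martingale has mean $\mu_n^*$) whereas the paper normalizes by $\mu_n^*\mu_n^r$ (so its martingale $X_r$ has mean $1$); this changes nothing in the resulting bound.
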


\begin{proof}
Start by noting that for any $r = 0, 1, 2, \dots$,
\begin{equation} \label{eq:moment}
\mathbb{E}_n[ \hat Z_r ] = \mu_n^* \mu_n^{r}.
\end{equation}
Moreover, on the event $\Omega_n$,
\begin{align*}
\mu_n &= \frac{n \nu_2 (1 + O(n^{-\gamma}))}{n \nu_1 (1 + O(n^{-\gamma}))} = \mu (1 + O(n^{-\gamma})), \qquad \text{and} \\
\mu_n^* &= \frac{n \nu_1 (1 + O(n^{-\gamma}))}{n} = \nu_1 (1 + O(n^{-\gamma})).
\end{align*}

Next, note that conditionally on $\mathscr{F}_n$, the process
\begin{align*}
X_r &= \frac{\hat Z_r}{\mu_n^* \mu_n^r} = \frac{1}{\mu_n^* \mu_n^r} \sum_{{\bf i} \in \hat A_{r-1}} \hat N_{\bf i}, \quad r \geq 1, \qquad X_0 = \frac{\hat N_\emptyset}{\mu_n^*}
\end{align*}
is a nonnegative martingale with respect to the filtration $\sigma\left( \mathcal{F}_r \cup \mathscr{F}_n\right)$, where
$\mathcal{F}_r = \sigma\left( \hat N_{\bf i}: {\bf i} \in \hat A_s, \, s \leq r\right)$.  Therefore, we can apply Doob's inequality, conditionally on $\mathscr{F}_n$, to obtain
\begin{align*}
P\left( \left. \max_{0 \leq r \leq k} \frac{\hat Z_r}{\mu^r} > K x_n \right| \Omega_n \right) &= P\left( \left. \max_{0 \leq r \leq k} \frac{X_r \mu_n^* \mu_n^r}{\mu^r}  > K x_n  \right| \Omega_n \right) \\
&= P\left( \left. \max_{0 \leq r \leq k} X_r \nu_1 (1 + O(n^{-\gamma}))^{r+1}  > K x_n  \right| \Omega_n \right) \\
&\leq \frac{1}{P(\Omega_n)} E\left[ 1(\Omega_n) \mathbb{E}_n \left[ 1\left( \max_{0 \leq r \leq k} X_r > \frac{K x_n}{\nu_1 (1+O(n^{-\gamma}))^{k+1}} \right)   \right] \right] \\
&\leq \frac{1}{P(\Omega_n)} E\left[ 1(\Omega_n) \frac{\mathbb{E}_n[X_k] \nu_1 (1 + O(n^{-\gamma}))^{k+1}}{K x_n} \right] \\
&= \frac{\nu_1 (1 + O(n^{-\gamma}))^{k+1}}{K x_n} \qquad \text{(since $\mathbb{E}_n[ X_k] = 1$)}.
\end{align*}
Noting that $(1 + O( n^{-\gamma}))^{k} = e^{O(k n^{-\gamma})} = O(1)$ as $n \to \infty$ gives that this last term is $O(x_n^{-1})$. This completes the proof.
\end{proof}

\bigskip

We now give the proof of the coupling lemma.

\begin{proof}[Proof of Lemma \ref{L.CouplingBreaks}]
Start by defining
$$x_n = \begin{cases} (n/\mu^{2k})^{1/2}, & \mu > 1, \\
(n/k^2)^{1/2}, & \mu = 1, \\
n^{1/2}, & \mu < 1, \end{cases} \qquad \text{and} \qquad F_{k} = \left\{  \max_{0 \leq r \leq k} \frac{\hat Z_r}{\mu^r} \leq x_n  \right\}.$$
Note that $x_n \to \infty$ as $n \to \infty$ for all $1 \leq k \leq h \log n$ when $\mu > 1$ and for all $1\leq k \leq n^b$, $b < \min\{1/2, \gamma\}$, when $\mu \leq 1$. The constraint $b < \gamma$ will allow us to use Lemma \ref{L.Doob}.

Next, note that the $j$th inbound stub of node $i \in A_s$ (where the label $i$ refers to the order in which the node was added to the graph during the exploration process) will be the first one to be paired with an outbound stub having label 2 or 3 with probability
$$\frac{1}{L_n} \left(\sum_{r=0}^{s-1} \hat V_r + \sum_{t = 1}^{i-1} D_t + (j-1) \right) \leq \frac{1}{L_n} \sum_{r=0}^s \hat V_r =: P_s.$$
 It follows that,
 \begin{align*}
P(\tau = s | \Omega_n) &\leq P(\tau = s, F_{k} | \Omega_n) + P(\tau = s, F_{k}^c | \Omega_n) \\
&\leq  P( \text{Bin}( \hat Z_s, P_s) \geq 1, \, F_{k} | \Omega_n) + P(\tau = s, F_{k}^c | \Omega_n) ,
\end{align*}
where Bin$(n,p)$ is a Binomial random variable with parameters $(n,p)$. It follows that if we let $\mathcal{F}_k = \sigma( \hat Z_r, \hat V_r : 1 \leq r \leq k)$, then
\begin{align*}
P(\tau \leq k | \Omega_n) &= \sum_{s = 0}^{k} P(\tau = s | \Omega_n) \\
&\leq \sum_{s = 0}^{k} \left\{ P\left( \left. \text{Bin}( \hat Z_s, P_s) \geq 1, \,  F_{k} \right| \Omega_n \right) + P\left( \left.  \tau = s, \, F_{k}^c  \right| \Omega_n \right) \right\} \\
&\leq \sum_{s = 0}^{k}  E\left[ \left. 1(F_k) P( \text{Bin}(\hat Z_s, P_s) \geq 1 | \mathcal{F}_k)   \right| \Omega_n \right]   + P\left( \left.  F_{k}^c  \right| \Omega_n \right) \\
&\leq \sum_{s = 0}^{k}  E\left[ \left. 1(F_k) \hat Z_s P_s    \right| \Omega_n \right]   + P\left( \left.  F_{k}^c  \right| \Omega_n \right),
\end{align*}
where in the last step we used Markov's inequality. Now, use the bound for $\hat Z_s$ implied by $F_k$ and recall that $|\hat A_r| = \hat Z_{r-1}$ to obtain
\begin{align}
E\left[ \left. 1(F_k) \hat Z_s P_s    \right| \Omega_n \right] &\leq E\left[ \left. \mu^s x_n P_s    \right| \Omega_n \right] \label{eq:Zproduct}  \\
&= \frac{\mu^s x_n }{\nu_1 n} \sum_{r=0}^s E\left[ \left.  \hat V_r   \right| \Omega_n \right] (1+ O(n^{-\gamma})) \notag \\
&= \frac{\mu^s x_n}{\nu_1 n} \left\{  E\left[ \left. \hat V_0 \right| \Omega_n \right] + \sum_{r=1}^s E\left[ \left. \mathbb{E}_n\left[ \hat V_r | \hat Z_{r-1} \right]   \right| \Omega_n \right] \right\} (1+ O(n^{-\gamma})) \notag \\
&= \frac{\mu^s x_n}{\nu_1 n} \left\{  E\left[ \left. \mu_n^* \right| \Omega_n \right] + \sum_{r=1}^s E\left[ \left. \hat Z_{r-1} \lambda_n   \right| \Omega_n \right] \right\} (1+ O(n^{-\gamma})), \notag
\end{align}
where in the first equality we used that on the set $\Omega_n$ we have $L_n = \nu_1 n (1+ O(n^{-\gamma}))$, and on the second equality we used the observation that
$$\mathbb{E}_n \left[ \hat V_0 \right] = \mathbb{E}_n \left[ \hat D_\emptyset \right] = \mu_n^*, \qquad  \mathbb{E}_n \left[ \left.  \hat V_r \right| \hat Z_{r-1} \right] = \hat Z_{r-1} \lambda_n, \quad r \geq 1,$$
where $\lambda_n = \mathbb{E}_n [ \hat D_1]$. Moreover, on the set $\Omega_n$ we have that
$$\lambda_n = \frac{1}{L_n} \sum_{k=1}^n D_k^2 = \frac{n \nu_3 (1+ O(n^{-\gamma}))}{n \nu_1 (1 + O(n^{-\gamma}))} = \lambda (1 + O(n^{-\gamma})),$$
so we obtain
\begin{align}
E\left[ \left. 1(F_k) \hat Z_s P_s    \right| \Omega_n \right] &\leq \frac{\mu^s x_n}{\nu_1 n} \left\{  \nu_1 + \sum_{r=1}^s \lambda E\left[ \left. \hat Z_{r-1} \right| \Omega_n \right] \right\}  (1 + O(n^{-\gamma})) \notag \\
&= \frac{\mu^s x_n}{\nu_1 n} \left\{ \nu_1 +  \sum_{r=1}^s \lambda E\left[ \left. \mu_n^* \mu_n^{r-1} \right| \Omega_n\right] \right\}  (1 + O(n^{-\gamma}))  \qquad \text{(by \eqref{eq:moment})}. \notag
\end{align}
Using the observation that $E\left[ \left. \mu_n^* \mu_n^{r-1} \right| \Omega_n\right] = \nu_1 \mu^{r-1} (1 + O(n^{-\gamma}))^{r-1}$ (see the proof of Lemma \ref{L.Doob}), and the condition $r-1 < s \leq k = O(n^\gamma)$, gives
\begin{align*}
P(\tau \leq k | \Omega_n) &\leq (1 + O(1)) \frac{(\lambda+1) x_n}{ n} \sum_{s=0}^k \sum_{r=0}^s \mu^{s+r}   + P(F_k^c | \Omega_n).
\end{align*}
Note that we did not compute $E\left[ \left. \hat Z_s P_s \right| \Omega_n \right]$ in \eqref{eq:Zproduct} directly, since that would have led to having to compute $\mathbb{E}_n \left[ \hat Z_{s-1}^2 \right]$ and neither $\hat N_0$ nor $\hat N_1$ are required to have finite second moments in the limit. Now, since by Lemma \ref{L.Doob} we have that $P(F_k^c | \Omega_n) = O\left( x_n^{-1} \right)$, and
$$\sum_{s=0}^k  \sum_{r=0}^s \mu^{s+r} \leq \begin{cases} \mu^{2(k+1)} / (\mu-1)^2, & \mu > 1, \\
 (k+1)(k+2)/2, & \mu = 1, \\
 1/(1-\mu), & \mu < 1,
\end{cases}$$
we conclude that
$$P(\tau \leq k | \Omega_n) = \begin{cases} O\left( x_n \mu^{2k} n^{-1} + x_n^{-1} \right) = O\left( (n/\mu^{2k})^{-1/2} \right), & \mu > 1, \\
O\left( x_n k^2 n^{-1} + x_n^{-1} \right) = O\left( (n/k^2)^{-1/2} \right), & \mu = 1, \\
O\left( x_n n^{-1} + x_n^{-1} \right) = O\left( n^{-1/2} \right), & \mu < 1, \end{cases}$$
as $n \to \infty$. This completes the proof.
\end{proof}

\subsection{Proof of the asymptotic behavior of  $\mathcal{R}^*$} \label{S.ProofAsymptotics}

We give in this section the proof of Theorem \ref{T.TailBehavior} which describes the asymptotic behavior of the limit $\mathcal{R}^*$, which is essentially determined by the asymptotic behavior of the endogenous solution $\mathcal{R}$ given in \eqref{eq:EndogenousSol}. The tail behavior of $\mathcal{R}$ is the main focus of the work in \cite{Volk_Litv_10, Jel_Olv_10, Jel_Olv_12a, Jel_Olv_12b, Olvera_12a}.

\bigskip

\begin{proof}[Proof of Theorem \ref{T.TailBehavior}]
We consider the case when $\mathcal{N}$ is regularly varying first. By Theorem~3.4 in \cite{Olvera_12a} and the remarks that follow it (see also Theorem 4.1 in \cite{Volk_Litv_10}),
$$P(\mathcal{R} > x) \sim \frac{(E[\mathcal{Q}]  E[ \mathcal{C}_1])^\alpha   }{(1-\rho)^\alpha (1-\rho_\alpha)} P(\mathcal{N} > x), \qquad x \to \infty,$$
and therefore, $P(\mathcal{R} > x) \in \mathscr{R}_{-\alpha}$. Next,  since the $\{ \mathcal{C}_i\}$ are i.i.d. and independent of $\mathcal{N}$, Minkowski's inequality gives for any $\beta \geq 1$,
\begin{equation} \label{eq:Minkowski}
E \left[ \left( \sum_{i=1}^{\mathcal{N}} \mathcal{C}_i \right)^{\beta} \right] = E\left[ E\left[ \left. \left( \sum_{i=1}^{\mathcal{N}} \mathcal{C}_i \right)^{\beta} \right| \mathcal{N} \right] \right] \leq E\left[  \mathcal{N}^{\beta} E[ \mathcal{C}_1^{\beta}]  \right].
\end{equation}
Applying Lemma 2.3 in \cite{Olvera_12a} with $\beta = 1 + \delta$ gives that $E[|\mathcal{R}|^{1+\delta} ] < \infty$ for all $0 < \delta < \alpha-1$. By conditioning on the filtration $\mathcal{F}_k = \sigma \left( (\mathcal{N}_{\bf i},  \mathcal{C}_{({\bf i},1)}, \mathcal{C}_{({\bf i},2)}, \dots):  {\bf i} \in \mathcal{A}_s, s < k \right)$ it can be shown that $E\left[ \sum_{{\bf i} \in \mathcal{A}_k} \Pi_{\bf i} \mathcal{Q}_{\bf i} \right] = \rho^k E[\mathcal{Q}]$, which implies that $E[ \mathcal{R} ] = (1-\rho)^{-1} E[\mathcal{Q}] > 0$. Also, by Lemma 3.7(2) in \cite{Jess_Miko_06} we have
$$P\left( \sum_{i=1}^{\mathcal{N}_0} \mathcal{C}_i > x\right) \sim \left( E[ \mathcal{C}_1 ] \right)^\alpha P(\mathcal{N}_0 > x) \sim  \kappa \frac{(1-\rho)^\alpha (1-\rho_\alpha)}{(E[\mathcal{Q}] )^\alpha } P(\mathcal{R} > x).$$
Using Theorem A.1 in \cite{Olvera_12a} we conclude that
\begin{align*}
P(\mathcal{R}^* > x) &\sim \left( E[\mathcal{N}_0] E[ \mathcal{C}_1^\alpha] + \kappa \frac{(1-\rho)^\alpha (1-\rho_\alpha)}{(E[\mathcal{Q}])^\alpha }  \left( E[\mathcal{R}] \right)^\alpha \right) P(\mathcal{R} > x) \\
&\sim \left( E[\mathcal{N}_0] E[ \mathcal{C}_1^\alpha] +  \kappa  (1-\rho_\alpha)   \right)  \frac{(E[\mathcal{Q}]  E[ \mathcal{C}_1])^\alpha   }{(1-\rho)^\alpha (1-\rho_\alpha)} P(\mathcal{N} > x)
\end{align*}
as $x \to \infty$.

Now, for the case when $\mathcal{Q}$ is regularly varying, note that $E \left[ \left( \sum_{i=1}^{\mathcal{N}} \mathcal{C}_i \right)^{\alpha+\epsilon} \right]< \infty$ by \eqref{eq:Minkowski} and the theorem's assumptions. Then, by Theorem 4.4 in \cite{Olvera_12a} (see also Theorem 4.1 in \cite{Volk_Litv_10}) we have
$$P(\mathcal{R} > x) \sim (1-\rho_\alpha)^{-1} P( \mathcal{Q} > x) , \qquad x \to \infty.$$
The same observations made for the previous case give $E[ |\mathcal{R}|^{1+\delta}] < \infty$ for all $0 < \delta < \alpha-1$. In addition, note that the same argument used above gives $E \left[ \left( \sum_{i=1}^{\mathcal{N}_0} \mathcal{C}_i \right)^{\alpha+\epsilon} \right] < \infty$. Also,
$$P\left( \mathcal{Q}_0 > x \right) \sim \kappa P\left( \mathcal{Q} > x \right) \sim \kappa (1-\rho_\alpha) P(\mathcal{R} > x).$$
It follows, by Theorem A.2 in \cite{Olvera_12a}, that
\begin{align*}
P\left( \mathcal{R}^* > x \right) &\sim \left( E[\mathcal{N}_0] E[ \mathcal{C}_1^\alpha ] +  \kappa (1-\rho_\alpha)  \right) P(\mathcal{R} > x) \\
&\sim \left( E[\mathcal{N}_0] E[ \mathcal{C}_1^\alpha ] +  \kappa (1-\rho_\alpha) \right) (1-\rho_\alpha)^{-1} P(\mathcal{Q} > x)
\end{align*}
as $x \to \infty$.
\end{proof}

\subsection{Proofs of properties of the {\em IID Algorithm}} \label{S.ProofsIIDexample}

Before giving the proofs of Propositions \ref{P.VerifyAssumption1} and \ref{P.VerifyAssumption2} we will need some general results for sequences of i.i.d. random variables, which may be of independent interest. The first result establishes a bound for the sum of the largest order statistics in a sample. The second result is essentially an explicit version of the Weak Law of Large Numbers.

\begin{lemma} \label{lem:order-stat-converge}
Let $X_1,X_2,\dots,X_n$ be i.i.d. nonnegative random variables satisfying $E[X_1^{1+\kappa}] < \infty$ for some $\kappa > 0$, and let $X_{(i)}$ denote the $i$th smallest observation from the set $\{X_1, X_2, \dots, X_n\}$.  Let $\left\{ \pi_1,\pi_2,\dots,\pi_n \right\}$ be any permutation of the set $\left\{ 1,2,\dots,n \right\}$. Then, for any $k_n \in \{1,2, 3, 4, \dots, n\}$ we have
$$P\left( \sum_{i= n-k_n+1}^n X_{(i)} > n^{1-\gamma} \right) = O\left( k_n^{\kappa/(1+\kappa)} n^{-(\kappa/(1+\kappa) -\gamma)} \right)$$
as $n \to \infty$.
\end{lemma}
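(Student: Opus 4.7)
The plan is to control the top-$k_n$ order-statistic sum by a truncation trick, turning it into the sum of an i.i.d. tail-truncated quantity plus a deterministic term, and then apply Markov's inequality with an optimally chosen threshold. The key deterministic observation is that for any threshold $t_n > 0$,
$$\sum_{i=n-k_n+1}^n X_{(i)} \; \leq \; \sum_{j=1}^n X_j \, \mathbf{1}(X_j > t_n) + k_n t_n.$$
Indeed, among the top $k_n$ order statistics, those that exceed $t_n$ are already counted in the first sum on the right (each such $X_{(i)}$ equals some $X_j > t_n$), while those that do not exceed $t_n$ contribute at most $k_n t_n$ in total since there are at most $k_n$ of them.

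Next, I would take expectations, use independence to get $E[\sum_j X_j \mathbf{1}(X_j > t_n)] = n E[X_1 \mathbf{1}(X_1 > t_n)]$, and control the truncated moment via
$$E[X_1 \mathbf{1}(X_1 > t_n)] \; \leq \; t_n^{-\kappa} E[X_1^{1+\kappa} \mathbf{1}(X_1 > t_n)] \; \leq \; t_n^{-\kappa} E[X_1^{1+\kappa}],$$
which is finite by hypothesis. Combining this with Markov's inequality gives
$$P\left(\sum_{i=n-k_n+1}^n X_{(i)} > n^{1-\gamma}\right) \; \leq \; \frac{n\, t_n^{-\kappa} E[X_1^{1+\kappa}] + k_n t_n}{n^{1-\gamma}}.$$

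Finally, I would optimize the threshold. Balancing the two terms in the numerator, i.e.\ setting $n t_n^{-\kappa} \asymp k_n t_n$, leads to $t_n \asymp (n/k_n)^{1/(1+\kappa)}$, with which both $n t_n^{-\kappa}$ and $k_n t_n$ are of the common order $k_n^{\kappa/(1+\kappa)} n^{1/(1+\kappa)}$. Dividing by $n^{1-\gamma}$ and using $1/(1+\kappa) - 1 = -\kappa/(1+\kappa)$ yields exactly the claimed bound
$$O\!\left( k_n^{\kappa/(1+\kappa)} \, n^{-(\kappa/(1+\kappa) - \gamma)} \right).$$

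The argument is essentially routine once the truncation split is set up; the only real choice is the threshold $t_n$, and it is forced on us by the balancing. The ``permutation $\{\pi_i\}$'' mentioned in the statement plays no role in the bound as written, so I anticipate no obstacle beyond bookkeeping the constants coming from $E[X_1^{1+\kappa}]$.
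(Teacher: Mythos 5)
Your proposal is correct, and it takes a genuinely different route from the paper. The paper's proof first stochastically dominates the $X_i$'s by i.i.d.\ Pareto variables (via Markov on $P(X_1>x)$), then invokes a closed-form expression for the expected Pareto order statistics in terms of Gamma-function ratios, and controls those ratios using Wendel's inequality before summing. Your argument sidesteps order-statistic formulas entirely: the pointwise truncation inequality $\sum_{i=n-k_n+1}^{n} X_{(i)} \leq \sum_{j=1}^{n} X_j \mathbf{1}(X_j > t_n) + k_n t_n$ converts the extremal sum into an i.i.d.\ truncated-tail sum plus a deterministic remainder, after which a single Markov bound and the balancing choice $t_n \asymp (n/k_n)^{1/(1+\kappa)}$ immediately give the stated rate. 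The truncation inequality is sound (the top order statistics exceeding $t_n$ are each accounted for in the indicator sum, and those at or below $t_n$ contribute at most $k_n t_n$), the truncated-moment bound $E[X_1\mathbf{1}(X_1>t_n)] \leq t_n^{-\kappa} E[X_1^{1+\kappa}]$ is elementary, and the exponent bookkeeping checks out: both terms in the numerator are of order $k_n^{\kappa/(1+\kappa)} n^{1/(1+\kappa)}$, and dividing by $n^{1-\gamma}$ gives exactly $k_n^{\kappa/(1+\kappa)} n^{-(\kappa/(1+\kappa)-\gamma)}$. What your approach buys is elementarity and brevity (no Gamma functions, no Wendel); what the paper's buys is a more structured view of the order statistics themselves via the dominating Pareto sample, which may be preferable if one wants finer information than the leading order. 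You are also right that the permutation $\{\pi_i\}$ in the statement is vestigial and plays no role; the paper's own proof ignores it as well.
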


\begin{proof}
Note that, by Markov's inequality,
$$P(X_1 > x) \leq E[X_1^{1+\kappa}] x^{-1-\kappa},$$
and therefore,
$$P(X_i > x) \leq P(Y_i > x),$$
where $\{ Y_1, Y_2, \dots, Y_n\}$ are i.i.d. Pareto random variables having distribution $G(x) = 1 - (x/b)^{-1-\kappa}$ for $x > b := \left(E[X_1^{1+\kappa}] \right)^{-1/(1+\kappa)}$. We then have that
\begin{align*}
P\left( \sum_{i= n-k_n+1}^{n} X_{(i)} > n^{1-\gamma} \right)  &\leq P\left( \sum_{i= n-k_n+1}^{n} Y_{(i)} > n^{1-\gamma} \right) \\
&\leq \frac{1}{n^{1-\gamma}}  \sum_{i=n-k_n+1}^n E[ Y_{(i)} ],
\end{align*}
where $Y_{(i)}$ is the $i$th smallest from the set $\{Y_1, Y_2, \dots, Y_n\}$. Moreover, it is known (see \cite{Vannman_76}, for example) that
$$E[ Y_{(i)} ] = b \cdot \frac{n!}{(n-i)!} \cdot \frac{\Gamma(n-i+1 - (1+\kappa)^{-1})}{\Gamma(n+1-(1+\kappa)^{-1})},$$
where $\Gamma(\cdot)$ is the Gamma function. By Wendel's inequality \cite{Wendel_48}, for any $0 < s < 1$ and $x > 0$,
$$\left( \frac{x}{x+s} \right)^{1-s} \leq \frac{\Gamma(x+s)}{x^s \Gamma(x)} \leq 1,$$
and therefore, for $i < n$, and $\vartheta = (1+\kappa)^{-1}$,
$$E[ Y_{(i)} ]  \leq b \cdot \frac{n!}{ \Gamma(n+1- \vartheta)} \cdot \frac{1}{(n-i)^{\vartheta}} \leq b\left( \frac{n+ 1 - \vartheta}{n-i} \right)^{\vartheta}.$$
We conclude that
\begin{align*}
\frac{1}{n^{1-\gamma}}  \sum_{i=n-k_n+1}^n E[ Y_{(i)} ] &\leq \frac{b}{n^{1-\gamma}} \left( \sum_{i=n-k_n+1}^{n-1}  \left( \frac{n+1-\vartheta}{n-i} \right)^{\vartheta} + \frac{n!  \Gamma(1-\vartheta) }{\Gamma(n+1-\vartheta)} \right) \\
&\leq \frac{b (n+1- \vartheta)^{\vartheta} }{n^{1-\gamma}} \left( \sum_{i=n-k_n+1}^{n-1}  \left( \frac{1}{n-i} \right)^{\vartheta} + \Gamma(1-\vartheta) \right) \\
&\leq \frac{b (n+1)^{\vartheta} }{n^{1-\gamma}} \left( \sum_{j=1}^{k_n-1} \int_{j-1}^j   \frac{1}{t^{\vartheta}} \, dt + \Gamma(1-\vartheta) \right) \\
&= \frac{b (n+1)^\vartheta }{n^{1-\gamma}} \left( \frac{(k_n-1)^{1-\vartheta}}{1-\vartheta} + \Gamma(1-\vartheta) \right) \\
&= O\left( \frac{k_n^{1-\vartheta}}{n^{1-\vartheta-\gamma}} \right),
\end{align*}
where in the second inequality we used Wendel's inequality. This completes the proof.
\end{proof}

\begin{lemma} \label{L.WeakLaw}
Let $\{X_1, X_2, \dots, X_n\}$ be i.i.d. random variables satisfying $E[|X_1|^{1+\kappa}] < \infty$ for some $\kappa > 0$ and $\mu = E[X_1]$.  Set $S_m = X_1 + \dots + X_m$ and  $\theta = \min\{ 1+\kappa, 2 \}$. Then, for any $K > 0$, any nonnegative sequence $\{x_n\}$ such that $x_n \to \infty$ as $n \to \infty$, and all $m = o\left( x_n^{1+\kappa} \right)$, there exists an $n_0 \geq 1$ such that for all $n \geq n_0$ ,
$$P\left(   |S_m - m \mu | > K x_n \right) \leq E[|X_1|^\theta] \left( \frac{2}{K^2} + 1 \right) \frac{m}{x_n^\theta}.$$
\end{lemma}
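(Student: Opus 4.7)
The plan is to prove Lemma \ref{L.WeakLaw} by a truncation argument combined with Chebyshev's inequality, splitting on whether or not $X_1$ has a finite second moment. The exponent $\theta = \min\{1+\kappa, 2\}$ makes the two cases transparent.

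\textbf{Case 1: $\kappa \geq 1$, so $\theta = 2$.} Here $E[X_1^2] < \infty$, and a direct application of Chebyshev's inequality to the i.i.d.\ sum $S_m$ yields
$$P(|S_m - m\mu| > K x_n) \leq \frac{m \,\mathrm{Var}(X_1)}{K^2 x_n^2} \leq \frac{E[|X_1|^\theta]}{K^2} \cdot \frac{m}{x_n^\theta}.$$
Since $1/K^2 \leq 1 + 2/K^2$, this is at most the bound claimed in the lemma, for all $n \geq 1$.

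\textbf{Case 2: $0 < \kappa < 1$, so $\theta = 1 + \kappa$.} Here I would truncate at level $x_n$: set $Y_i = X_i \mathbf{1}(|X_i| \leq x_n)$, and write
$$P(|S_m - m\mu| > K x_n) \leq P(\exists\, i \leq m: |X_i| > x_n) + P\!\left(\left| \sum_{i=1}^m Y_i - m\mu\right| > K x_n \right).$$
The first term is controlled by a union bound together with Markov's inequality applied to $|X_1|^{1+\kappa}$:
$$P(\exists\, i \leq m: |X_i| > x_n) \leq m P(|X_1| > x_n) \leq E[|X_1|^\theta] \cdot \frac{m}{x_n^\theta}.$$
For the second term, decompose $\sum_i Y_i - m\mu = (\sum_i Y_i - m E[Y_1]) + m(E[Y_1] - \mu)$. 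The bias satisfies
$$m|E[Y_1] - \mu| = m |E[X_1 \mathbf{1}(|X_1| > x_n)]| \leq m \,E[|X_1|^{1+\kappa}]\, x_n^{-\kappa},$$
which under the hypothesis $m = o(x_n^{1+\kappa})$ is $o(x_n)$. Thus there exists $n_0$ such that for all $n \geq n_0$ we have $m|E[Y_1] - \mu| \leq (1 - 1/\sqrt{2}) K x_n$. On this range of $n$, a single Chebyshev estimate gives
$$P\!\left(\left|\sum_{i=1}^m Y_i - m\mu\right| > K x_n\right) \leq P\!\left(\left|\sum_{i=1}^m Y_i - m E[Y_1]\right| > \tfrac{K x_n}{\sqrt 2}\right) \leq \frac{2\, m\, E[Y_1^2]}{K^2 x_n^2}.$$
The bound $E[Y_1^2] \leq x_n^{1-\kappa}E[|X_1|^{1+\kappa}]$, obtained by factoring $X_1^2 = |X_1|^{1+\kappa}\cdot |X_1|^{1-\kappa}$ on $\{|X_1| \leq x_n\}$, then yields
$$P\!\left(\left|\sum_{i=1}^m Y_i - m\mu\right| > K x_n\right) \leq \frac{2\, E[|X_1|^\theta]}{K^2} \cdot \frac{m}{x_n^\theta}.$$
Adding the two contributions reproduces the constant $1 + 2/K^2$ claimed in the lemma.

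The main subtlety is bookkeeping the constants to land exactly on $2/K^2 + 1$: the split of $K x_n$ between the bias and the fluctuation term has to be done with a factor $1/\sqrt 2$ so that Chebyshev contributes $2/K^2$ rather than $4/K^2$. The only place where a threshold $n_0$ is genuinely required is in ensuring that $m |E[Y_1] - \mu| \leq (1 - 1/\sqrt 2) K x_n$, which uses the hypothesis $m = o(x_n^{1+\kappa})$ in an essential way; without it, truncation at level $x_n$ introduces a bias comparable to or larger than the deviation one is trying to rule out.
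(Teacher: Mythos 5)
Your proof is correct and takes essentially the same route as the paper: split on $\kappa\geq 1$ versus $0<\kappa<1$, apply Chebyshev directly in the first case, and in the second case control the contribution of large values via Markov's inequality on $|X_1|^{1+\kappa}$, bound the bias of the truncated mean using $m=o(x_n^{1+\kappa})$, and finish with Chebyshev on the truncated sum. The only (cosmetic) difference is that you truncate by multiplying with an indicator, whereas the paper replaces the $X_i$ by i.i.d.\ copies of the conditional law given $|X_1|\leq x_n$ and carries along the factor $G(t)^m$; your version is if anything slightly cleaner, and both land on the same $2/K^2+1$ constant after the same kind of $n\geq n_0$ threshold argument.
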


\begin{proof}
If $\kappa \geq 1$, then Chebyshev's inequality gives, for all $m \geq 1$,
$$P\left( |S_m - m\mu| > K x_n \right) \leq \frac{m\var(X_1)}{K^2 x_n^2} \leq \frac{m E[|X_1|^2]}{K^2 x_n^2} = \frac{m E[|X_1|^\theta]}{K^2 x_n^\theta}.$$

Suppose now that $0 < \kappa < 1$ and let $G(t) = P(|X_1| \leq t)$. Set $t = x_n$ and define $P(\tilde X_i \leq x) = P(X_i \leq x | X_i \leq t)$, and note that
\begin{align*}
\left| E[\tilde X_1] - \mu \right| &= \left| E[ X_1 1(|X_1| \leq t)] / G(t) - \mu \right| \\
&\leq \frac{1}{G(t)} \left|  E[ X_1 1(|X_1| \leq t)] - \mu  \right|  + \frac{|\mu| \overline{G}(t)}{G(t)} \\
&= \frac{1}{G(t)} \left( \left| E[X_1 1(|X_1| > t)] \right| + |\mu| \overline{G}(t) \right) \\
&\leq \frac{1}{G(t)} \left(  t \overline{G}(t) +  \int_t^\infty \overline{G}(x)  dx   + |\mu| \overline{G}(t) \right) \\
&\leq \frac{E[|X_1|^{1+\kappa}]}{G(t)} \left( t^{-\kappa} + \int_t^\infty x^{-1-\kappa} \, dx + |\mu| t^{-1-\kappa} \right) \qquad \text{(by Markov's inequality)} \\
&= \frac{E[|X_1|^{1+\kappa}]}{G(t)} \left( \frac{1+\kappa}{\kappa}  + |\mu| t^{-1} \right) t^{-\kappa}.
\end{align*}
Then, for sufficiently large $n$, we obtain that
$$\left| E[\tilde X_1] - \mu \right| \leq 2 E[|X_1|^{1+\kappa}] \left( \frac{1+\kappa}{\kappa} + |\mu| \right) t^{-\kappa} \triangleq K' t^{-\kappa} = K' x_n^{-\kappa}.$$
It follows that for sufficiently large $n$ and $m = o(x_n^{1+\kappa})$,
\begin{align*}
&P\left(  \left| S_m - m\mu \right| > K x_n \right) \\
&= P\left( \left| \sum_{i=1}^m ( \tilde X_i - \mu) \right|  > K x_n \right) G(t)^m  + P\left( \left| \sum_{i=1}^m (X_i - \mu) \right| > K x_n, \, \max_{1\leq i \leq m} |X_i| > t  \right) \\
&\leq P\left( \left| \sum_{i=1}^m ( \tilde X_i - E[\tilde X_1] ) \right|  +m \left| E[\tilde X_1] - \mu \right| > K x_n \right) G(t)^m  + P\left( \max_{1\leq i \leq m} |X_i| > t  \right) \\
&\leq \frac{G(t)^m}{\left( K x_n - K' m t^{-\kappa} \right)^2} \cdot m \var(\tilde X_1)+ 1 - G(t)^m \qquad \text{(by Chebyshev's inequality)}\\
&\leq \frac{G(t)^m m \var(\tilde X_1)}{K^2 x_n^2 (1 - m x_n^{-1-\kappa} K'/K)^2} + m \overline{G}(t).
\end{align*}
To estimate $\var(\tilde X_1)$ note that
$$\var(\tilde X_1) \leq E[ \tilde X_1^2 ] = \frac{E[ X_1^2 1(|X_1|\leq t)]}{G(t)} \leq \frac{E[|X_1|^{1+\kappa}] t^{1-\kappa}}{G(t)},$$
so using Markov's inequality again to estimate $\overline{G}(t)$ gives us
\begin{align*}
P\left(  \left| S_m - m\mu \right| > K x_n \right) &\leq \frac{E[|X_1|^{1+\kappa}] }{K^2 (1-  m x_n^{-1-\kappa} K'/K)^2} \cdot \frac{m t^{1-\kappa}}{x_n^2} + \frac{E[|X_1|^{1+\kappa}] m}{t^{1+\kappa}} \\
&=   E[|X_1|^{1+\kappa}] \left( \frac{1}{K^2 (1- m x_n^{-1-\kappa} K'/K)^2} + 1\right) \frac{m}{x_n^{1+\kappa}} \\
&= E[|X_1|^\theta ] \left( \frac{1}{K^2 (1- m x_n^{-1-\kappa} K'/K)^2} + 1\right) \frac{m}{x_n^\theta}  .
\end{align*}
This completes the proof.
\end{proof}

\bigskip

By setting $m = n$ and $x_n = n^{1-\gamma}$ we immediately obtain the following corollary.

\begin{cor} \label{C.WeakLaw}
Let $\{X_1, X_2, \dots, X_n\}$ be i.i.d. random variables satisfying $E[|X_1|^{1+\kappa}] < \infty$ for some $\kappa > 0$ and $\mu = E[X_1]$.  Set $S_n = X_1 + \dots + X_n$. Then, for any $0 \leq \gamma < 1- 1/\theta$ , $\theta = \min\{ 1+\kappa, 2 \}$ and any constant $K > 0$, there exists an $n_0 \geq 1$ such that for all $n \geq n_0$
$$P\left(   |S_n - n \mu | > K n^{1-\gamma} \right) \leq E[|X_1|^\theta] \left( \frac{2}{K^2} + 1 \right) n^{-\theta(1-1/\theta-\gamma)}.$$
\end{cor}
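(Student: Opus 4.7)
The plan is essentially a direct specialization of Lemma~\ref{L.WeakLaw}, so the corollary requires no new probabilistic ideas: I would set $m=n$ and $x_n = n^{1-\gamma}$ and feed these into the inequality already proved there.

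First, I would verify that the sequence condition of the lemma, namely $m = o(x_n^{1+\kappa})$, is satisfied. With the proposed substitution this becomes $n = o(n^{(1-\gamma)(1+\kappa)})$, which holds precisely when $(1-\gamma)(1+\kappa) > 1$, i.e.\ when $\gamma < \kappa/(1+\kappa)$. I would then split on the two regimes defining $\theta = \min\{1+\kappa,2\}$. If $\kappa < 1$, then $\theta = 1+\kappa$ and $1 - 1/\theta = \kappa/(1+\kappa)$, so the hypothesis $\gamma < 1 - 1/\theta$ is exactly what we need. If $\kappa \geq 1$, then $\theta = 2$ and $1-1/\theta = 1/2 \leq \kappa/(1+\kappa)$, so again $\gamma < 1-1/\theta$ implies $\gamma < \kappa/(1+\kappa)$. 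Either way, the tail condition from Lemma~\ref{L.WeakLaw} is met.

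Second, since $x_n = n^{1-\gamma} \to \infty$, the lemma guarantees the existence of some $n_0 \geq 1$ such that for all $n \geq n_0$,
\begin{equation*}
P\bigl(|S_n - n\mu| > K n^{1-\gamma}\bigr) \leq E[|X_1|^\theta]\left(\frac{2}{K^2} + 1\right) \frac{n}{(n^{1-\gamma})^\theta}.
\end{equation*}
Finally I would simplify the exponent: $n / n^{(1-\gamma)\theta} = n^{1-(1-\gamma)\theta} = n^{-\theta(1 - 1/\theta - \gamma)}$, which is exactly the bound claimed in the corollary.

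There is no real obstacle here; the corollary is a direct corollary in the literal sense. The only point that requires a moment of care is checking that the two cases $\kappa < 1$ and $\kappa \geq 1$ both fall under the lemma's hypothesis $m = o(x_n^{1+\kappa})$ under the single stated assumption $\gamma < 1 - 1/\theta$, which I handled above by comparing $1-1/\theta$ with $\kappa/(1+\kappa)$ in each regime.
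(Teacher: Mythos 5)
Your proof is correct and takes exactly the route the paper intends: the paper states the corollary immediately after Lemma~\ref{L.WeakLaw} with only the remark that it follows by setting $m=n$ and $x_n = n^{1-\gamma}$. Your additional verification that the hypothesis $m = o(x_n^{1+\kappa})$ is equivalent, in both regimes of $\theta$, to the stated condition $\gamma < 1-1/\theta$ is precisely the small check the paper leaves implicit, and you carried it out correctly.
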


We now proceed to prove that the extended bi-degree sequence generated by the IID Algorithm satisfies Assumptions \ref{A.Bidegree} and \ref{A.WeakConvergence}.

\bigskip

\begin{proof}[Proof of Proposition \ref{P.VerifyAssumption1}]
It suffices to show that $P\left(\Omega_{n,i}^c\right)=O(n^{-\varepsilon})$ for  some $\varepsilon > 0$ and $i=1,\dots,6$.  Throughout the proof let $E_n = \{ |\Delta_n| \leq n^{1-\kappa_0+\delta_0} \}$ and recall that by \eqref{eq:Difference} $P(E_n^c ) = O\left( n^{-\delta_0 \eta} \right)$, where $\eta = (\kappa_0-\delta_0)/(1-\kappa_0)$.

We start with $\Omega_{n,2}$. Let $\nu_2 = (E[\mathscr{D}])^2$ and define $\chi_i = D_i - \mathscr{D}_i$, $\tau_i = N_i - \mathscr{N}_i$. Note that $\chi_i, \tau_i \in \{0, 1\}$ for all $i = 1, \dots, n$; moreover, either all the $\{\chi_i\}$ or all the $\{\tau_i\}$ are zero, and therefore $\chi_i \tau_j = 0$ for all $1\leq i,j \leq n$. We now have
\begin{align*}
        \left|\sum_{i=1}^n D_{i} N_{i} - n \nu_2 \right| &= \left|\sum_{i=1}^n \mathscr{D}_{i} \mathscr{N}_i - n \nu_2 +\sum_{i=1}^{n} (\mathscr{D}_i \tau_i + \chi_i \mathscr{N}_i  )  \right| \\
        &\leq  \left|\sum_{i=1}^n \mathscr{D}_{i} \mathscr{N}_i - n \nu_2 \right|+ \max\left\{ \sum_{i=n-\Delta_n+1}^{n} \mathscr{D}_{(i)}, \, \sum_{i = n - \Delta_n+1}^n \mathscr{N}_{(i)}  \right\},
    \end{align*}
  where $\mathscr{D}_{(i)}$ (respectively, $\mathscr{N}_{(i)}$) is the $i$th smallest value from the set $\{ \mathscr{D}_1, \dots, \mathscr{D}_n\}$ (respectively, $\{ \mathscr{N}_1, \dots, \mathscr{N}_n\}$). Since $|\Delta_n| \leq n^{1-\kappa_0+\delta_0}$ on $E_n$, we have
 \begin{align*}
 P(\Omega_{n,2}^c) &=  P\left( \left. \left|\sum_{i=1}^n D_{i} N_i - n \nu_2 \right| > n^{1-\gamma} \right| E_n \right) \\
 &\leq \frac{1}{P(E_n)} \left\{  P\left( \left|\sum_{i=1}^n \mathscr{D}_{i} \mathscr{N}_i - n \nu_2 \right| > \frac{n^{1-\gamma}}{2} \right) \right. \\
 &\hspace{5mm} \left. + P\left(  \sum_{i=n- \lfloor n^{1-\eta(1-\kappa_0)} \rfloor +1 }^{n} \mathscr{D}_{(i)} > \frac{n^{1-\gamma}}{2} \right) + P\left(  \sum_{i=n- \lfloor n^{1-\eta(1-\kappa_0)} \rfloor+1}^{n} \mathscr{N}_{(i)} > \frac{n^{1-\gamma}}{2} \right) \right\}.
 \end{align*}
 Now apply Corollary \ref{C.WeakLaw} to $X_i = \mathscr{D}_i \mathscr{N}_i$, which satisfies $E[ (\mathscr{D}_1 \mathscr{N}_1)^{1+\eta}] = E[ \mathscr{N}_1^{1+\eta}]  E[ \mathscr{D}_1^{1+\eta}]  < \infty$, to obtain
$$P\left( \left|\sum_{i=1}^n \mathscr{D}_{i} \mathscr{N}_i - n \nu_2 \right| > \frac{n^{1-\gamma}}{2} \right) = O\left( n^{-\eta  + (1+\eta) \gamma} \right).$$
For the remaining two probabilities use Lemma \ref{lem:order-stat-converge} to see that
\begin{align*}
&P\left(  \sum_{i=n- \lfloor n^{1-\eta(1-\kappa_0)} \rfloor +1 }^{n} \mathscr{D}_{(i)} > \frac{n^{1-\gamma}}{2} \right) + P\left(  \sum_{i=n- \lfloor n^{1-\eta(1-\kappa_0)} \rfloor+1}^{n} \mathscr{N}_{(i)} > \frac{n^{1-\gamma}}{2} \right) \\
&= O\left( n^{(1-\eta(1-\kappa_0)) \eta/(1+\eta) - (\eta/(1+\eta) -\gamma)} \right) \\
&= O \left( n^{-\eta(\kappa_0-\delta_0)/(1+\eta) + \gamma} \right).
\end{align*}
It follows from these estimates that
\begin{equation} \label{eq:Omega2}
P(\Omega_{n,2}^c) = O\left( n^{-\eta(\kappa_0-\delta_0)/(1+\eta) + \gamma} \right) .
\end{equation}

Next, we can analyze $\Omega_{n,1}, \Omega_{n,3}$ and $\Omega_{n,4}$ by considering the sequence $\{D_{i}^{\vartheta} \}$ where $\vartheta$ can be taken to be $1, 2$ or $2+\kappa$. Correspondingly, we have $\nu_1 = E[ \mathscr{D}]$, $\nu_3 = E[\mathscr{D}^2]$ and $\nu_4 = E[\mathscr{D}^{2+\kappa}]$. Similarly as what was done for $\Omega_{n,2}$, note that
\begin{align*}
\left| \sum_{i=1}^n D_i^{\vartheta} - n E[\mathscr{D}^{\vartheta}] \right| &\leq \left| \sum_{i=1}^n \mathscr{D}_i^{\vartheta} - n E[\mathscr{D}^{\vartheta}] \right| +  \sum_{i=1}^n \left( (\mathscr{D}_i + \chi_i)^{\vartheta} - \mathscr{D}_i^\vartheta \right) \\
&\leq \left| \sum_{i=1}^n \mathscr{D}_i^{\vartheta} - n E[\mathscr{D}^{\vartheta}] \right| +  \sum_{i=1}^n \vartheta (\mathscr{D}_i + 1)^{\vartheta-1} \chi_i,
\end{align*}
where we used the inequality $(d + x)^\vartheta - d^\vartheta \leq \vartheta (d+1)^{\vartheta-1} x$ for $d \geq 0$, $x \in [0,1]$ and $\vartheta \geq 1$. Now note that $E[(\mathscr{D}^\vartheta)^{1+\sigma}] < \infty$ for any $0<\sigma < (\beta-2-\kappa)/(2+\kappa)$; in particular, since $\gamma < (\beta-2-\kappa)/\beta$, we can choose $\gamma/(1-\gamma) < \sigma < (\beta-2-\kappa)/(2+\kappa)$. For such $\sigma$, Corollary \ref{C.WeakLaw} gives
$$P\left( \left| \sum_{i=1}^n \mathscr{D}_i^{\vartheta} - n E[\mathscr{D}^{\vartheta}] \right| > \frac{n^{1-\gamma}}{2} \right) = O \left( n^{-\sigma + (1+\sigma) \gamma}   \right).$$
For the term involving the $\{\chi_i\}$ we use again Lemma \ref{lem:order-stat-converge} to obtain
\begin{align*}
P\left(  \sum_{i=1}^n \vartheta (\mathscr{D}_i + 1)^{\vartheta-1} \chi_i > \frac{n^{1-\gamma}}{2}  \right) &\leq P\left(  \sum_{i=n- \lceil n^{1-\eta} \rceil+1}^n \vartheta (\mathscr{D}_{(i)} + 1)^{\vartheta-1} > \frac{n^{1-\gamma}}{2}  \right) \\
&= O\left( n^{(1-\eta)(1 - 1/2) - (1 - \gamma -1/2)}   \right) \\
&= O\left(  n^{ - \eta/2 + \gamma}   \right).
\end{align*}
It follows that
\begin{align}
P(\Omega_{n,i}^c) &\leq \frac{1}{P(E_n)} \cdot  O\left( n^{-\sigma+(1+\sigma)\gamma} + n^{-\eta/2+\gamma} \right)  , \qquad i = 1, 3, 4. \label{eq:Omega134}
\end{align}

Now note that since $|\zeta| \leq c < 1$ a.s., then $E[|\zeta|^2] < \infty$ and Corollary \ref{C.WeakLaw} gives
\begin{align} \label{eq:Omega5}
P(\Omega_{n,5}^c) &= P\left( \left| \sum_{r=1}^n |\zeta_r| 1(D_r \geq 1)   - n\nu_5 \right| > n^{1-\gamma}  \right) \notag \\
&= P\left( \left| \sum_{r=1}^n |\zeta_r| 1(\mathscr{D}_r \geq 1)  - n\nu_5 \right| + c|\Delta_n|  > n^{1-\gamma}  \right) =  O\left( n^{-1 + 2\gamma} \right).
\end{align}

Finally, by Corollary \ref{C.WeakLaw} and \eqref{eq:Difference},
\begin{equation} \label{eq:Omega6}
P(\Omega_{n,6}^c) \leq P\left( \left. \left| \sum_{r=1}^n |Q_r| - nE[|Q|] \right| > n \right| E_n \right) = O\left(  n^{-\epsilon_Q}  + n^{-\delta_0\eta} \right).
\end{equation}

Our choice of $0 < \gamma  < \min\{ \eta(\kappa_0-\delta_0)(1+\eta), \, \sigma/(1+\sigma)\}$ guarantees that all the exponents of $n$ in expressions \eqref{eq:Omega2} - \eqref{eq:Omega5} are strictly negative, which completes the proof.
 \end{proof}

 \bigskip

\begin{proof}[Proof of Proposition \ref{P.VerifyAssumption2}]
We will show that $d_1(F_n^*, F^*)$ and $d_1(F_n, F)$ converge to zero a.s. by using the duality formula for the Kantorovich-Rubinstein distance.  To this end, let $S_n = \sum_{i=1}^n \mathscr{D}_i$, $\mathscr{C}_k = \zeta_k/\mathscr{D}_k 1(\mathscr{D}_k \geq 1) + c \sgn(\zeta_k) 1(\mathscr{D}_k = 0)$,  and fix $\psi^*: \mathbb{R}^2 \to \mathbb{R}$ and $\psi: \mathbb{R}^3 \to \mathbb{R}$ to be Lipschitz continuous functions with Lipschitz constant one.
Then,
\begin{align*}
\mathcal{E}_0 &:= \left| \frac{1}{n} \sum_{k=1}^n \psi^*(N_k, Q_k)  - \frac{1}{n} \sum_{k=1}^n \psi^*(\mathscr{N}_k, Q_k) \right| \\
&\leq \frac{1}{n} \sum_{k=1}^n \left| \psi^*(\mathscr{N}_k + 1, Q_k) - \psi^*(\mathscr{N}_k, Q_k) \right| 1(N_k = \mathscr{N}_k + 1) \\
&\leq \frac{1}{n} \sum_{k=1}^n 1(N_k = \mathscr{N}_k + 1) \leq \frac{|\Delta_n|}{n},
\end{align*}
and
\begin{align*}
\mathcal{E}_1 &:= \left| \sum_{k=1}^n \psi(N_k, Q_k, C_k) \frac{D_k}{L_n} - \sum_{k=1}^n \psi(\mathscr{N}_k, Q_k, \mathscr{C}_k) \frac{\mathscr{D}_k}{S_n} \right| \\
&\leq  \sum_{k=1}^n  \frac{\mathscr{D}_k}{S_n} \left| \psi(N_k, Q_k, \mathscr{C}_k) - \psi(\mathscr{N}_k, Q_k, \mathscr{C}_k) \right| 1(\Delta_n \leq 0) \\
&\hspace{5mm} +\sum_{k=1}^n \frac{D_k}{L_n}  \left|  \psi(\mathscr{N}_k, Q_k, C_k)  - \psi(\mathscr{N}_k, Q_k, \mathscr{C}_k)  \right| 1(\Delta_n > 0) \\
&\hspace{5mm} +  \sum_{k=1}^n \left|\psi(\mathscr{N}_k, Q_k, \zeta_k/\mathscr{D}_k) \left( \frac{D_k}{L_n} - \frac{\mathscr{D}_k}{S_n} \right)\right| 1(\Delta_n > 0) \\
&\leq  \sum_{k=1}^n  \frac{\mathscr{D}_k}{S_n} 1(N_k = \mathscr{N}_k +1)  + \sum_{k=1}^n \frac{D_k}{L_n} \left| \zeta_k/(\mathscr{D}_k+1) -\mathscr{C}_k \right| 1(D_k = \mathscr{D}_k + 1) \\
&\hspace{5mm} + \sum_{k=1}^n \left| \psi(\mathscr{N}_k, Q_k, \mathscr{C}_k) \right| \left| \frac{(D_k - \mathscr{D}_k) S_n - \mathscr{D}_k \Delta_n }{L_n S_n} \right| 1(\Delta_n > 0),
\end{align*}
where we used the fact that $\psi^*$ and $\psi$ have Lipschitz constant one. To bound further $\mathcal{E}_1$ use the Cauchy-Schwarz inequality to obtain
$$\sum_{k=1}^n  \frac{\mathscr{D}_k}{S_n} 1(N_k = \mathscr{N}_k +1) \leq \frac{n}{S_n} \left( \frac{1}{n} \sum_{k=1}^n \mathscr{D}_k^2 \right)^{1/2} \left( \frac{|\Delta_n|}{n} \right)^{1/2}.$$
Now, use the observation that $|\zeta_k| \leq c$ to obtain
\begin{align*}
& \sum_{k=1}^n \frac{D_k}{L_n} \left| \zeta_k/(\mathscr{D}_k+1) - \mathscr{C}_k \right| 1(D_k = \mathscr{D}_k + 1) \\
&\leq c \sum_{k=1}^n \frac{1}{L_n \mathscr{D}_k}  1(D_k = \mathscr{D}_k + 1, \mathscr{D}_k \geq 1)  + \sum_{k=1}^n \frac{1}{L_n} \left| \zeta_k - c \sgn(\zeta_k) \right| 1(D_k = \mathscr{D}_k + 1, \mathscr{D}_k = 0) \\
&\leq \frac{c}{L_n} \sum_{k=1}^n 1(D_k = \mathscr{D}_k+1) \leq \frac{c |\Delta_n|}{S_n}.
\end{align*}
Next, use the bound $|\psi(m,q,x)| \leq || (m,q,x) ||_1 + |\psi(0,0,0)|$ and H\"older's inequality to obtain
\begin{align*}
&\sum_{k=1}^n \left| \psi(\mathscr{N}_k, Q_k, \mathscr{C}_k) \right| \left| \frac{(D_k - \mathscr{D}_k) S_n - \mathscr{D}_k \Delta_n }{L_n S_n} \right| 1(\Delta_n > 0) \\
&\leq \sum_{k=1}^n \left| \psi(\mathscr{N}_k, Q_k, \mathscr{C}_k) \right| \frac{1(D_k = \mathscr{D}_k +1) }{S_n}  + \sum_{k=1}^n \left| \psi(\mathscr{N}_k, Q_k, \mathscr{C}_k) \right| \frac{ \mathscr{D}_k |\Delta_n| }{ S_n^2} \\
&\leq \frac{1}{S_n} \sum_{k=1}^n \left|\left| (\mathscr{N}_k, Q_k, c) \right|\right|_1 1(D_k = \mathscr{D}_k +1)  + \frac{|\Delta_n|}{S_n^2} \sum_{k=1}^n \left( \mathscr{N}_k \mathscr{D}_k + |Q_k| \mathscr{D}_k + c \right)   + \frac{2|\psi(0,0,0) \Delta_n| }{S_n} \\
&\leq \frac{n}{S_n} \left\{ \left( \frac{1}{n} \sum_{k=1}^n \mathscr{N}_k^{1+\delta} \right)^{1/(1+\delta)} +  \left( \frac{1}{n} \sum_{k=1}^n |Q_k|^{1+\delta} \right)^{1/(1+\delta)} \right\} \left( \frac{|\Delta_n|}{n} \right)^{\delta/(1+\delta)} \\
&\hspace{5mm} + \frac{|\Delta_n|}{S_n^2} \sum_{k=1}^n \left( \mathscr{N}_k \mathscr{D}_k + |Q_k| \mathscr{D}_k \right) + \frac{H |\Delta_n|}{S_n},
\end{align*}
where $0 < \delta < \min\{\alpha-1, \epsilon_Q\}$ and $H = 2|\psi(0,0,0)| + 2c$. Now note that since the bi-degree sequence is constructed on the event $|\Delta_n|\le n^{1-\kappa_0+\delta_0}$, we have that $\mathcal{E}_0 \leq n^{-\kappa_0 + \delta_0}$ a.s. To show that $\mathcal{E}_1$ converges to zero a.s. use the Strong Law of Large Numbers (SLLN) (recall that $E[\mathscr{D}^2] < \infty$ and that $\mathscr{N}, \mathscr{D}, Q$ are mutually independent) and the bounds derived above.

Finally, by the SLLN again and the fact that $E[ ||(\mathscr{N}, Q, \mathscr{C}) ||_1] < \infty$, we have
$$\lim_{n \to \infty} \frac{1}{n}\sum_{k=1}^n \psi^*(N_k ,Q_k) = \lim_{n \to \infty} \frac{1}{n}\sum_{k=1}^n \psi^*(\mathscr N_k ,Q_k) = E[ \psi^*( \mathscr{N}, Q)] \quad \text{a.s.}$$
and
$$\lim_{n \to \infty} \sum_{i=1}^n \psi(N_k, Q_k, C_k) \frac{\mathscr D_i}{S_n} = \lim_{n \to \infty} \sum_{k=1}^n \psi(\mathscr{N}_k, Q_k, \mathscr{C}_k) \frac{\mathscr{D}_k}{S_n} =  \frac{1}{\mu} E[ \psi(\mathscr{N}, Q, \mathscr{C}) \mathscr{D}] \quad \text{a.s.}$$
The first limit combined with the duality formula gives that $d_1(F_n^*, F^*) \to 0$ a.s. For the second limit we still need to identify the limiting distribution, for which we note that
\begin{align*}
\frac{1}{\mu} E[\psi(\mathscr{N}, Q, \mathscr{C}) \mathscr{D}] &= \frac{1}{\mu} E\left[ E[ \psi(\mathscr{N}, Q, \mathscr{C}) \mathscr{D} | \mathscr{N}, Q] \right] = \frac{1}{\mu} E\left[ \sum_{i=1}^\infty \int_{-\infty}^\infty \psi(\mathscr{N}, Q, z/i) i \, d F^\zeta(z) P(\mathscr{D} = i)\right] \\
&= \frac{1}{\mu} E\left[ \sum_{i=1}^\infty \int_{-\infty}^{\infty} \psi(\mathscr{N}, Q, y) i \, d F^\zeta(yi) P(\mathscr{D} = i)\right] =: E\left[ \psi(\mathscr{N}, Q, Y) \right],
\end{align*}
where $Y$ has distribution function
\begin{align*}
P(Y \leq x) &= \frac{1}{\mu} E\left[ \sum_{i=1}^\infty \int_{-\infty}^\infty 1(y \leq x) i \, d F^\zeta (yi) P(\mathscr{D} = i)\right] = \frac{1}{\mu} E\left[ \sum_{i=1}^\infty i F^\zeta(ix) P(\mathscr{D} = i)\right] \\
&= \frac{1}{\mu} E[ \mathscr{D} F^\zeta( \mathscr{D}x) ] = \frac{1}{\mu} E[ \mathscr{D} 1(\zeta/\mathscr{D} \leq x)] = P(\mathcal{C} \leq x).
\end{align*}
It follows that $E[ \psi(\mathscr{N}, Q, \mathscr{C}) \mathscr{D}] /\mu = E[\psi(\mathscr{N}, Q, \mathcal{C}) ]$, which combined with the duality formula gives that $d_1(F_n, F) \to 0$ a.s.
\end{proof}

\bibliographystyle{plain}
\bibliography{PageRankBib}

\end{document}